\documentclass[11pt]{amsart}

\usepackage[utf8]{inputenc}
\usepackage[T1]{fontenc}
\usepackage[english]{babel}

\usepackage[foot]{amsaddr}

\usepackage{amsthm}
\usepackage{amsmath}
\usepackage{amssymb}
\usepackage{mathrsfs}
\usepackage{dsfont}
\usepackage{tabularx,multirow}
\usepackage{subcaption}
\usepackage{booktabs}
\usepackage{algorithm}
\usepackage{matlab-prettifier}

\usepackage{graphicx}
\usepackage{float}
\usepackage{comment}
\usepackage{xcolor}

\usepackage{color}
\definecolor{marin}{rgb}   {0.,   0.1,   0.5} 
\definecolor{rouge}{rgb}   {0.8,   0.,   0.} 
\definecolor{sepia}{rgb}   {0.4,   0.25,   0.} 
\definecolor{mag}{rgb}   {0.3,   0,   0.3} 

\usepackage[colorlinks,citecolor=sepia,linkcolor=marin,urlcolor=mag ,bookmarksopen,bookmarksnumbered]{hyperref}

\newcommand{\ic}{\mathrm{i}}
\newcommand{\R}{\mathbb{R}}

\newcommand{\ynull}{\emph{Y0}\;}
\newcommand{\yzwei}{\emph{Y2}\;}
\newcommand{\ydrei}{\emph{Y3}\;}
\newcommand{\yvier}{\emph{Y4}\;}
\newcommand{\yk}{\emph{Yk}\;}

\newcommand{\norm}[1]{\left\lVert#1\right\rVert}

\newtheorem{theorem}{Theorem}[section]
\newtheorem{corollary}[theorem]{Corollary}
\newtheorem{lemma}[theorem]{Lemma}
\newtheorem{proposition}[theorem]{Proposition}
\newtheorem{definition}[theorem]{Definition}
\newtheorem{remark}[theorem]{Remark}
\newtheorem{example}[theorem]{Example}

\numberwithin{equation}{section}

\newcommand{\QED}{\mbox{}\hfill \raisebox{-0.2pt}{\rule{5.6pt}{6pt}\rule{0pt}{0pt}} \medskip\par}

\newenvironment{Proof}{\noindent
   \parindent=0pt\abovedisplayskip = 0.5\abovedisplayskip
    \belowdisplayskip=\abovedisplayskip{\bfseries Proof. }}{\QED\hspace*{.5cm}}
 \newenvironment{ProofOf}[1]{\noindent
    \parindent=0pt\abovedisplayskip = 0.5\abovedisplayskip
    \belowdisplayskip=\abovedisplayskip{\bfseries Proof of  #1. }}{\QED\hspace*{.5cm}}

\newcommand{\myauthors}{J. Bernier, R. Häberli, and G. Vilmart}

\usepackage{fancyhdr}
\renewcommand{\sectionmark}[1]{%
\markboth{\MakeUppercase{#1}}{\MakeUppercase{#1}}%
}

\begin{document}

\title[Splitting methods for linear Schrödinger equations with compatibility conditions]{Arbitrary high order splitting methods for linear {S}chrödinger equations with non-trivial compatibility conditions}

\author[J. Bernier]{Joackim Bernier$^1$}
\address{$^1$Nantes Universit\'e, CNRS, Laboratoire de Math\'ematiques Jean Leray, LMJL, F-44000 Nantes, France}
\email{joackim.bernier@univ-nantes.fr}

\author[R. Häberli]{Ramona Häberli$^2$}
\author[G. Vilmart]{Gilles Vilmart$^2$}
\address{$^2$Université de Genève, Section de mathématiques, 7-9 rue du Conseil-Général, CH-1211 Genève 4, Schwitzerland}
\email{ramona.haeberli@unige.ch}
\email{gilles.vilmart@unige.ch}


\begin{abstract} 
Splitting methods are a natural choice for the numerical time integration of partial differential equations, and arbitrary high order splitting schemes exist for Schrödinger equations with periodic boundary conditions. However, in the presence of non-periodic boundary conditions, we show that they suffer in general from an order reduction, even for smooth initial conditions. The reason for such order reduction phenomena are so-called compatibility conditions, which are not preserved by classical splitting schemes. In this paper, we introduce a family of modified splitting methods for one-dimensional linear Schrödinger equations with homogeneous Dirichlet boundary conditions, which achieve an arbitrary high order, and do not suffer from any order reduction. This is illustrated with a fourth order splitting scheme considering initial conditions with various regularity properties.
\end{abstract}

\maketitle

\setcounter{tocdepth}{1} 

\section{Introduction}
\sectionmark{\footnotesize Introduction}

We aim at studying smooth solutions to the linear Schrödinger equation 
\begin{equation} \label{eq:ev_Dir}
 \ic \partial_t u = (\partial_x^2+V) u \quad \text{in}\; \R \times(0,1), \qquad  u(\cdot,0) = u(\cdot,1)=0 \quad \text{in} \;\R,
\end{equation}
where $V\in C^\infty([0,1];\mathbb{R})$ is a given potential and $u :  \mathbb{R} \times [0,1] \to \mathbb{C}$. More precisely, we look for solutions 
\begin{equation} \label{cond:reg}
u\in C^0(\mathbb{R};H^{2k}) \cap  C^1(\mathbb{R};H^{2k-2}) 
\end{equation}
for some $k\geq 1$, where the Sobolev spaces are defined as usual by
$$
H^{k} = \{ u \in L^2([0,1];\mathbb{C}) \ | \ \forall \ell \leq k, \; \partial_x^\ell u \in L^2([0,1];\mathbb{C})  \}.
$$
In general, Schrödinger equations serve as the fundamental evolution model for driven quantum systems, e.g. in optics~\cite{Sulem:1999:TNS}, quantum fluids~\cite{Bao:2005:FTL, Bao:2008:AGP}, and laser-matter interaction such as strong-field ionization~\cite{Wells:2019:AFA}. The one-dimensional linear problem~\eqref{eq:ev_Dir} is of use to linearize nonlinear problems and approximates higher-dimensional confined quantum systems, especially in the context of splitting methods. Thereby, the problem is often studied over a torus, i.e. with periodic boundary conditions~\cite{Blanes:2000:SMF,Faou:2012:GNI,Lubich:2008:FQT}, see also~\cite{Ji:2024:LRF, Ji:2025:LRE}, where low regular initial data is considered. In contrast, on an unbounded domain, usually one inserts absorbing boundary conditions for the time integration of the problem~\cite{Antoine:2008:ARO, Antoine:2001:CSA, Antoine:2012:ABC, Antoine:2013:ABC}.
However, when modeling a particle confined to a bounded region, we impose homogeneous Dirichlet boundary conditions.

A common way to integrate in time the boundary value problem~\eqref{eq:ev_Dir} on a time inverval $[0, T]$ for some $T > 0$, is to approximate the exact flow by a numerical scheme at time $t_n = n\tau$, where $\tau >0$ is the time step. We write 
$$
u_{n+1} = \Phi_{\tau}u_n  \approx u(t_{n+1}) =  e^{-\ic \tau (\partial^2_x + V)}u(t_n), \quad n=0, 1, \ldots
$$
where $\Phi_{\tau}$ is one step of a numerical scheme, e.g. a splitting method, which approximates the exact flow $e^{-\ic \tau (\partial^2_x + V)}$. We consider two subproblems, namely
\begin{equation} \label{prob:lapl}
\ic \partial_t u(t,x) = \partial_x^2 u(t,x) \quad \text{in}\; \R \times(0,1), \qquad  u(t,0) = u(t,1)=0  \quad \text{in} \;\R,
\end{equation}
as well as the potential equation
\begin{equation} \label{prob:pot}
\ic \partial_t u(t,x) = V(x)u(t,x) \quad \text{in}\;  \R \times(0,1),
\end{equation}
where we denote the exact flows by $e^{-\ic t \partial^2_x}u^{(0)}$ and $e^{-\ic t V}u^{(0)}$ for given initial data $u(0,\cdot)=u^{(0)}$ and $t \in (0, T]$. Although an operator splitting is not absolutely necessary for the time integration of the linear problem~\eqref{eq:ev_Dir}, splitting schemes allow a separate treatment of each sub-operator, which can lead to increased efficiency and easier implementation. 

In~\cite{Jahnke:2000:EBF} (see also~\cite{Bao:2024:OEB} for lower regularity assumptions), second order convergence for the Strang splitting scheme
\begin{equation} \label{strang}
\Phi^{\text{Strang}}_{\tau} = e^{-\frac{\ic \tau}{2}V}e^{-\ic \tau\partial^2_x}e^{-\frac{\ic \tau}{2}V}
\end{equation}
is shown when integrating $\ic \partial_t u = (\partial_x^2+V) u$ over the torus, i.e. in the context of periodic boundary conditions. More general, in~\cite{Thalhammer:2008:HOE} splitting methods
\begin{equation} \label{splitting}
\Phi_{\tau} = e^{-\ic b_s\tau\partial^2_x}e^{-\ic a_s\tau V} \cdots e^{-\ic b_1\tau\partial^2_x}e^{-\ic a_1\tau V},
\end{equation}
of arbitrarily high order are achieved for real coefficients $a_j, b_j, j=1, \ldots, s, s \geq 1$, satisfying particular order conditions. While the full order convergence seems to persist for the second order scheme~\eqref{strang} when integrating in time the initial problem~\eqref{eq:ev_Dir} with homogeneous Dirichlet boundary conditions, splitting methods of higher order suffer in general from an order reduction. For the numerical experiments, we focus on the symmetric splitting scheme
\begin{align} \label{y4}
\Phi^{\text{Y0}}_{\tau} = e^{-\ic a_1\tau V} e^{-\ic b_1\tau\partial^2_x}e^{-\ic a_2\tau V}e^{-\ic b_2\tau\partial^2_x}e^{-\ic a_2\tau V} e^{-\ic b_1\tau\partial^2_x}e^{-\ic a_1\tau V},
\end{align}
introduced by Yoshida~\cite{Yoshida:1990:COH} in the context of Hamiltonian systems, with real coefficients 
$$
a_1 = (2(2-2^{1/3}))^{-1}, \quad a_2 = (1-2^{1/3})a_1, \quad  b_1 = 2a_1, \quad b_2 = -2^{4/3}a_1,
$$
based on the order conditions in~\cite{Neri:1985:LAA}. Method \eqref{y4} is of formal order four, i.e. we observe a fourth order convergence of the numerical solution $u_n = (\Phi^{Y0}_{\tau})^n u^{(0)}$ to non-stiff ordinary differential equations with smooth vector fields. However, in general it converges with order strictly smaller than four when applied to problem~\eqref{eq:ev_Dir}. 
\begin{figure}[!tbp]
\begin{minipage}[c]{0.48\textwidth}
\includegraphics[width=\textwidth]{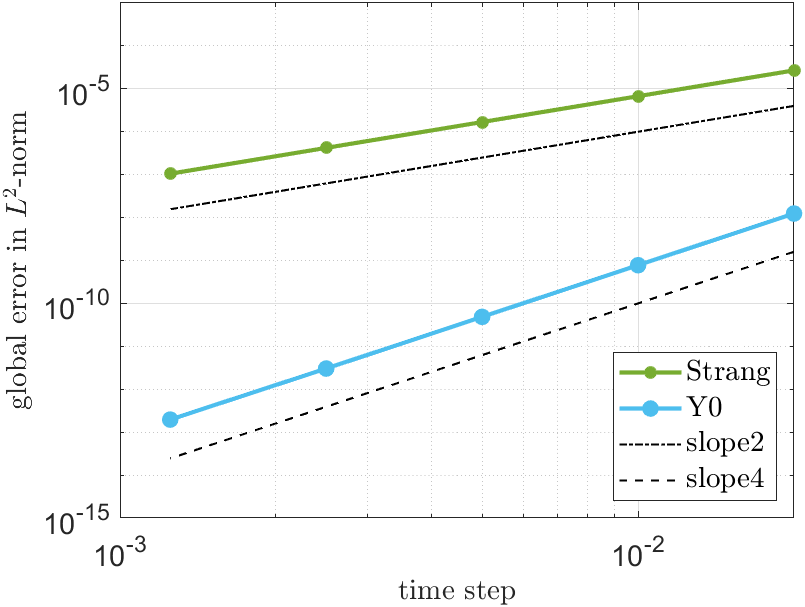}
\centering \small
odd initial condition, even potential
\end{minipage}
\quad
\begin{minipage}[c]{0.48\textwidth}
\includegraphics[width=\textwidth]{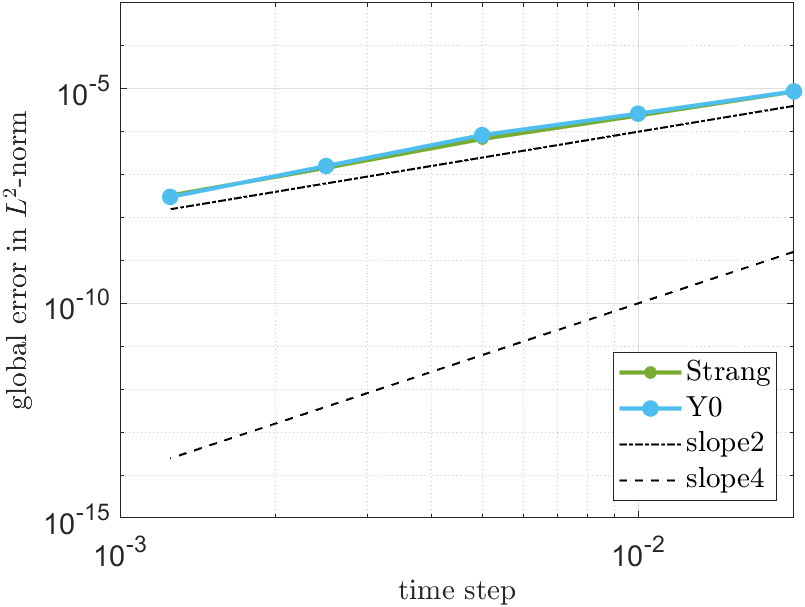}
\centering \small
more general initial condition and potential
\end{minipage}
\vspace*{1mm}
\caption{\small Convergence error of the splitting schemes~\eqref{strang} and~\eqref{y4} applied to problem~\eqref{eq:ev_Dir} for initial condition and potential $u^{(0)}(x)=\sin(2\pi x), V(x)=\cos(2\pi x)$ (left) and $u^{(0)}(x)=x(1-x)e^{x(1-x)}, V(x)=1+4x-4x^2$ (right) respectively. Reference slopes of order two and four are given in dashed lines. We set the final time $T=0.1$, and use $N=512$ points to discretize the interval $(0,1)$. For the reference solution, we consider an explicit fourth order Runge-Kutta method with time step $\tau = 10^{-6}$.}
\label{fig:naiv}
\end{figure}
In Figure~\ref{fig:naiv}, we illustrate the convergence behavior of the Strang splitting method~\eqref{strang} and the classical scheme Y0~\eqref{y4}. On the left panel, for the initial condition $u^{(0)}(x)=\sin(2\pi x)$ and the potential $V(x)=\cos(2\pi x)$, we observe full order convergence for both schemes. The choice of the even potential and the odd initial data is somehow favorable, since the numerical solutions of~\eqref{strang} and~\eqref{y4} correspond to the splitting solutions of problem~\eqref{eq:ev_Dir}, when considered with periodic boundary conditions, see Remark~\ref{rem:oddeven} for more details. Moreover, for $u^{(0)}(x)=x(1-x)e^{x(1-x)}$ and $V(x)=1+4x-4x^2$, this property is lost, and~\eqref{y4} converges with reduced order two. Thus, both in terms of the convergence order and the error constant, the naive method~\eqref{y4} does not perform better than the second order scheme~\eqref{strang}. In the literature~\cite{Bertoli:2021:SOT, Bertoli:2020:SSM,Einkemmer:2015:OOR, Einkemmer:2016:OOR,Haeberli:2026:OTO}, there exist modifications of splitting schemes, which avoid such order reduction phenomena for parabolic problems. However, in general there is no improvement in terms of the order when we directly apply those correction techniques in the context of Schrödinger equations.

The reason for such an order reduction as observed in Figure~\ref{fig:naiv} is caused by the presence of some non-trivial generalized boundary conditions. We show that whenever the solution to problem~\eqref{eq:ev_Dir} is smooth in the sense of~\eqref{cond:reg}, it satisfies so-called \emph{compatibility conditions} (see Proposition~\ref{prop:compat_cond}). In other words, $u$ lives in the function space
\begin{equation} \label{cond:comp}
H^{2k}_{V} = \{ u \in H^{2k} \ | \ \forall \ell =0,1, \ldots, k-1,\ (\partial_x^{2} + V)^\ell u(0) =  (\partial_x^{2} + V)^\ell u(1)=0\}.
\end{equation}
In~\cite{Kato:1985:ADE}, such conditions were first introduced for non-autonomous problems $u_t +A(t)u = f(t)$, where $A$ is a closed operator with good stability properties, and later applied to a generalized Schrödinger equation with Neumann/Dirichlet boundary conditions. We also cite~\cite{Kawashima:1992:GEA}, where the same framework is used to show global existence of small solutions to nonlinear viscoelasticity. Furthermore, compatibility conditions were studied in order to achieve high regularity of solutions to hyperbolic problems with non-homogeneous boundary conditions~\cite{Lasiecka:1986:NHB, Perrin:2025:COR}.

If we integrate in time problem~\eqref{eq:ev_Dir} by means of classical splitting methods, the compatibility conditions are in general not preserved, which may lead to a loss of regularity. In particular, the operator $u \mapsto Vu$ does not map $H^{2k}_V$-functions to $H^{2k}_V$. For instance, for $k=2$, there holds that $H^4_V = H^4_\mathrm{Dir}$, where we denote in general by
\begin{equation} \label{cond:comp0}
H^{2k}_{\mathrm{Dir}} = \{ u \in H^{2k} \ | \ \forall \ell =0,1, \ldots, k-1, \; \partial_x^{2\ell} u(0) =  \partial_x^{2\ell} u(1)=0 \}
\end{equation}
the space~\eqref{cond:comp} for $V=0$. Then, considering $u(x)=\sin(2\pi x)$, we have $u \in H^{2k}_{\mathrm{Dir}}$ for all $k \geq 0$. However, if $u$ is multiplied by $V(x)=\sin(2\pi x)$, there is $\partial^{2}_x (Vu)(y)\neq 0$, $y \in \{0,1\}$ (and analogously for higher derivatives), and consequently $Vu \notin H^{4}_{\mathrm{Dir}}$, see also Figure~\ref{fig:intro_schr_fm} in Section~\ref{sec:numerics}. This phenomenon does not appear when periodic boundary conditions are considered. That is why the convergence results for high order splittings over a torus cannot directly be applied in the context of zero boundary conditions. In general, in order to get high order convergence, the solution has to be sufficiently smooth, which is the reason why the compatibility conditions intervene.

\medskip

In this paper, we introduce a family of numerical methods, which integrate in time the evolution problem~\eqref{eq:ev_Dir} with arbitrarily high order.
Our approach relies on writing the smooth solution as
\begin{align} \label{def:umod}
u(t) = e^{\mathfrak{C}_k} e^{-\ic t(\partial_x^2+V^{\text{cor}}_k)} e^{-\mathfrak{C}_k}u^{(0)}.
\end{align}
Here, $v=e^{-\mathfrak{C}_k}u$ can be seen as a change of variable $H^{2k}_{V} \to H^{2k}_{\mathrm{Dir}}$, where we define corrector functions $\mathfrak{C}_k$, $k \geq 1$, in a way such that $v$ and all its derivatives $\partial^{2\ell}_xv$, $\ell < k$, vanish at the boundary. Furthermore, $ e^{-\ic t(\partial_x^2+V^{\text{cor}}_k)}$ denotes the exact flow to the modified problem
\begin{equation} \label{eq:cauchy_intro}
 \ic \partial_t v= (\partial_x^2+V^{\text{cor}}_k) v, \quad \text{in}\; \R \times(0,1) \qquad  v(\cdot,0) = v(\cdot,1)=0 \quad \text{in}\; \R,
\end{equation}
where $V^{\text{cor}}_k=e^{- \mathfrak{C}_k} (\partial_x^2 + V) e^{ \mathfrak{C}_k  }- \partial_x^2$ defines the modified potential (which is no more a multiplication operator). We will show that this flow preserves the function space $H^{2k}_{\mathrm{Dir}}$.
For the time discretization, we use splitting methods to approximate the solution of problem~\eqref{eq:cauchy_intro}. For $\tau >0$, we denote by $\Phi^{\text{cor}}_{\tau}$
one step of a splitting scheme~\eqref{splitting} applied to the Cauchy problem. This corrected splitting schemes preserve the space regularity, e.g. the modified compatibility conditions, given by $H^{2k}_{\mathrm{Dir}}$. In order to discretize in space, we first apply a periodic extension to the torus of double size of the interval $[0,1]$, and define the respective functions on this torus, we write e.g. $w^{(0)}$ for the extended initial condition. Then, we use pseudo-spectral methods in order proceed the space discretization, where the index $N$ indicates that the operators and functions are discretized in space, and corresponds to the number of points we use to discretize the space $(0,1)$, to the number of Fourier modes respectively. Finally, for the numerical schemes
\begin{align} \label{splittingmod}
u_{n,N} = e^{\mathfrak{C}_{k,N}} (\Phi^{\text{cor}}_{\tau, N})^n  e^{-\mathfrak{C}_{k,N}}w_N^{(0)}
\end{align}
there holds the following error estimate.
If $1\leq a \leq k$ and $u^{(0)} \in H^{2k}_V$ then for all $T>0$ and $N\geq 1$, we have 
\begin{equation*} 
\| u(t_n) - u_{n,N}\|_{H^{2a}} \lesssim_{T,k} (\tau^{\min(k-a,p)} + N^{-2(k-a)}) \| w^{(0)} \|_{H^{2k}}\quad \text{for}\; 0 \leq t_n \leq T,
\end{equation*}
where $u \in C^0([0,T];H^{2k}_V) $ is the solution of \eqref{eq:ev_Dir} and $p \geq 1$ is the formal order of the splitting scheme $\Phi_{\tau}$.

The outline of the paper is as follows. In Section~\ref{sec:comp}, we define the compatibility conditions and show that~\eqref{def:umod} is a unique solution~\eqref{cond:reg} to~\eqref{eq:ev_Dir}. Furthermore, we discretize this solution in time by means of classical splitting schemes. In Section~\ref{sec:discret}, we explain the space discretization and introduce the notation in order to prove the main result in a fully discretized setting. In Section~\ref{sec:convanal}, we show that the modified splitting schemes~\eqref{splittingmod} integrate in time problem~\eqref{eq:ev_Dir} with arbitrary high order, and therefore avoid order reduction. Finally, in Section~\ref{sec:numerics} we confirm numerically the convergence result while applying the presented modifications to the fourth order splitting scheme~\eqref{y4}.

\section{Compatibility conditions and correctors} \label{sec:comp}
\sectionmark{\footnotesize Compatibility conditions and correctors}

In this section we give a representation of smooth solutions~\eqref{cond:reg} to the boundary value problem~\eqref{eq:ev_Dir}, and discretize them in time by means of splitting methods. Then we show arbitrarily high order convergence of the new schemes.
\subsection{Compatibility conditions} In the following result, which is inspired by~\cite{Kato:1985:ADE}, we show that smooth solutions to \eqref{eq:ev_Dir} satisfy the compatibility conditions defined in~\eqref{cond:comp}.

\begin{proposition} \label{prop:compat_cond}Let $k\geq 1$ and $u\in C^0(\mathbb{R};H^{2k}) \cap  C^1(\mathbb{R};H^{2k-2}) $ be a solution to \eqref{eq:ev_Dir} then
$$
\forall j\leq k, \quad u\in C^j(\mathbb{R}; H^{2k-2j}_V)
$$
where
$$
\forall a\geq 0, \quad H^{2a}_{V} = \{ u \in H^{2a} \ | \ \forall \ell< a,\ (\partial_x^{2} + V)^\ell u(0) =  (\partial_x^{2} + V)^\ell u(1)=0\}.
$$
\end{proposition}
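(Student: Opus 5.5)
We argue by induction on $j$, bootstrapping time regularity from the equation. Set $A = \partial_x^2 + V$. On $H^{2m}$ with $m\ge1$, $A$ is a bounded map $H^{2m}\to H^{2m-2}$. The equation $\ic \partial_t u = A u$ holds in $C^0(\mathbb{R};H^{2k-2})$, since $u\in C^1(\mathbb{R};H^{2k-2})$ and $Au \in C^0(\mathbb{R};H^{2k-2})$.

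\emph{Step 1: time regularity.} We claim that for every $j\le k$, $u\in C^j(\mathbb{R};H^{2k-2j})$ with $\partial_t^j u = (-\ic)^j A^j u$. For $j=0,1$ this is the hypothesis together with the equation. Suppose it holds for some $j<k$. Then $\partial_t^j u = (-\ic)^j A^j u$, and $A^j$ is bounded from $H^{2k-2}$ to $H^{2k-2-2j}$, because $2k-2j\ge 2$ at each intermediate stage. Since $u\in C^1(\mathbb{R};H^{2k-2})$, the composition $A^j u$ lies in $C^1(\mathbb{R};H^{2k-2j-2})$. Its derivative is $A^j\partial_t u = (-\ic)A^{j+1}u$, and this is continuous in $H^{2k-2j-2}$ because $u\in C^0(\mathbb{R};H^{2k})$ and $A^{j+1}:H^{2k}\to H^{2k-2j-2}$ is bounded. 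Hence $u\in C^{j+1}(\mathbb{R};H^{2k-2(j+1)})$.

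\emph{Step 2: boundary conditions.} For $u(t)$, we need $A^\ell u(t)\in H^2$ to vanish at $0$ and $1$ for every $\ell<k$. The case $\ell=0$ is the Dirichlet condition in \eqref{eq:ev_Dir}. For $1\le\ell<k$, Step 1 gives $A^\ell u = \ic^\ell \partial_t^\ell u \in C^0(\mathbb{R};H^{2k-2\ell})$. It remains to check that $\partial_t^\ell u(t)$ vanishes at $x=0,1$. The trace $w\mapsto w(0)$ is a continuous linear functional on $H^{2}\subset H^1\subset C^0([0,1])$, and it commutes with time derivatives of $C^\ell(\mathbb{R};H^2)$ curves. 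Since $u\in C^{\ell}(\mathbb{R};H^{2k-2\ell})\subset C^\ell(\mathbb{R};H^2)$ for $\ell\le k-1$, we get $(\partial_t^\ell u)(t,0)=\partial_t^\ell\big(u(t,0)\big)=0$, as $u(t,0)\equiv 0$, and likewise at $x=1$. This shows $u(t)\in H^{2k}_V$.

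\emph{Step 3: the derivatives.} For $\partial_t^j u = (-\ic)^jA^ju$ with $j\le k$, we need $A^{\ell}A^j u(t)$ to vanish at the boundary for all $\ell<k-j$. This is $A^{\ell+j}u(t)$ with $\ell+j<k$, which was handled in Step 2. Combined with the continuity from Step 1, this gives $u\in C^j(\mathbb{R};H^{2k-2j}_V)$, since $H^{2k-2j}_V$ is a closed subspace carrying the $H^{2k-2j}$ topology.

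The main obstacle is the commutation of trace and time derivative in Step 2. It requires $\partial_t^\ell u$ to be continuous with values in a space where point evaluation is continuous. This is exactly why the boundary conditions stop at $\ell=k-1$: the case $\ell=k$ would only give $\partial_t^k u \in C^0(\mathbb{R};L^2)$, where traces are meaningless.
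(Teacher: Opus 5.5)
Your proof is correct, and it reaches the conclusion by a somewhat different route from the paper's.

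Your Steps 1 and 3 coincide with the second half of the paper's argument. That is the bootstrap $\partial_t^j u = (-\ic)^j(\partial_x^2+V)^j u \in C^0(\mathbb{R};H^{2k-2j})$, followed by inheritance of the boundary conditions, using $H^{2k-2i}_V\subset H^{2k-2j}_V$ for $i\le j$. You use that inclusion implicitly when you apply Step 3 at every order.

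The difference is in how you obtain $u(t)\in H^{2k}_V$.
\begin{itemize}
\item The paper proceeds by induction on $k$ and only ever uses one time derivative. The induction hypothesis gives $u\in C^0(\mathbb{R};H^{2k-2}_V)$. Since $H^{2k-2}_V$ is closed in $H^{2k-2}$ and $u\in C^1(\mathbb{R};H^{2k-2})$, also $\partial_t u\in C^0(\mathbb{R};H^{2k-2}_V)$. The equation turns this into $(\partial_x^2+V)u\in C^0(\mathbb{R};H^{2k-2}_V)$, i.e.\ $u\in C^0(\mathbb{R};H^{2k}_V)$.
\item You instead do the full time bootstrap first. You then read off the $\ell$-th condition directly as the $\ell$-th time derivative of the Dirichlet trace $t\mapsto u(t,0)\equiv 0$.
\end{itemize}

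The underlying mechanism is the same in both: the defining traces are continuous linear functionals on $H^2$, so they commute with $\partial_t$. That is precisely why $H^{2k-2}_V$ is closed.

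Each packaging has its advantage. Yours makes transparent that the compatibility conditions are nothing but time-differentiated boundary conditions, and it shows directly why they stop at $\ell=k-1$ (you need $\partial_t^\ell u\in C^0(\mathbb{R};H^2)$). The paper's induction never needs more than one time derivative before establishing the boundary conditions, and it reuses the statement at level $k-1$.
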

\begin{proof}
In a first step, we show the result for $j=0$, i.e. $u \in C^0(\mathbb{R}; H^{2k}_V)$, by induction over $k$. Note that
$$
H^2_V = \{ u \in H^{2} \ | \ u(0) =  u(1)=0\}= H^2_{\mathrm{Dir}},
$$
therefore, $u \in C^0(\mathbb{R};H^{2}_V)$
and the result holds true for $k=1$. Then, we consider a solution $u\in C^0(\mathbb{R};H^{2k}) \cap  C^1(\mathbb{R};H^{2k-2})$ of \eqref{eq:ev_Dir}. Since $H^{2k}\subset H^{2k-2}$ continuously, by induction hypothesis we have $u\in  C^0(\mathbb{R};H^{2k-2}_V)$. Thus, since $H^{2k-2}_V$ is a closed subset of $H^{2k-2}$ and  $u\in C^1(\mathbb{R};H^{2k-2})$, we deduce that $u\in C^1(\mathbb{R};H^{2k-2}_V)$. Finally, since $u$ is a solution of \eqref{eq:ev_Dir}, there holds true that $(\partial_x^2+V)u\in C^0(\mathbb{R};H^{2k-2}_V)$ and hence, $u\in C^0(\mathbb{R};H^{2k}_V)$.

In a second step, for $k \geq 1$, we show that if $u \in  C^0(\mathbb{R}; H^{2k}_V)\cap C^1(\mathbb{R}; H^{2k-2}_V)$ is a solution of \eqref{eq:ev_Dir}, then $u\in C^j(\mathbb{R}; H^{2k-2j}_V)$ for all $j \leq k$. Indeed, by a straightforward induction on $j$, we have that
$u\in C^j(\mathbb{R}; H^{2k-2j})$ for all $j\leq k$ with $\partial_t^j u = (-\ic)^j (\partial_x^2+V)^ju$. Thus, using $u \in  C^0(\mathbb{R}; H^{2k}_V)$, we deduce that $\partial_t^j u \in C^0(\mathbb{R}; H^{2k-2j}_V)$. Finally, noticing that $ H^{2k-2i}_V \subset H^{2k-2j}_V$ for $i\leq j$, we get, as expected, $u\in C^j(\mathbb{R}; H^{2k-2j}_V)$.
\end{proof}

Furthermore, we exhibit a polynomial structure behind these compatibility conditions. 

\begin{lemma} \label{lem:appendix} There exists a family of polynomials $f_{i} \in C^\infty([0,1];\mathbb{R})[X]$, $i\geq 2$, with $\deg f_i = i-1$, such that
$$
\forall \ell \geq 0, \quad (\partial_x^2+V)^{\ell} =\partial_x^{2\ell} + \sum_{i=2}^{2\ell} f_{i}(\ell) \partial_x^{2\ell-i}.
$$
\end{lemma}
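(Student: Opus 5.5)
Interpret the statement as follows: for each $i \geq 2$ there is a polynomial $f_i(X) = \sum_{m} c_{i,m} X^m$ whose coefficients $c_{i,m} \in C^\infty([0,1];\mathbb{R})$ are functions of $x$, acting as multiplication operators, with $\deg f_i = i-1$. When $\ell=0,1$ the sum is empty or trivially handled: for $\ell=1$ the sum reduces to $f_2(1)$, which must equal $V$. We argue by induction on $\ell$. Suppose
$$(\partial_x^2+V)^{\ell} = \partial_x^{2\ell} + \sum_{i=2}^{2\ell} g_i^{(\ell)} \partial_x^{2\ell-i},$$
with smooth coefficients $g_i^{(\ell)}$. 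Composing on the left with $\partial_x^2+V$ and using the Leibniz rule $\partial_x^2 (g\,\partial_x^m) = g\,\partial_x^{m+2} + 2g'\,\partial_x^{m+1} + g''\,\partial_x^m$, we get the recursion
$$g_i^{(\ell+1)} = g_i^{(\ell)} + 2\,\partial_x g_{i-1}^{(\ell)} + \partial_x^2 g_{i-2}^{(\ell)} + V g_{i-2}^{(\ell)},$$
valid for $2 \leq i \leq 2\ell+2$. Here we use the conventions $g_0^{(\ell)} = 1$, $g_1^{(\ell)} = 0$, and $g_i^{(\ell)} = 0$ for $i > 2\ell$. The case $i=2$ reads $g_2^{(\ell+1)} = g_2^{(\ell)} + V$, so $g_2^{(\ell)} = \ell V$, which has degree $1$ in $\ell$.

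It remains to show that $\ell \mapsto g_i^{(\ell)}$ agrees, for all $\ell \geq 0$, with a polynomial $f_i(\ell)$ of degree $i-1$ in $\ell$ with coefficients in $C^\infty([0,1])$. We prove this by strong induction on $i$. The recursion is a first-order difference equation
$$\Delta g_i := g_i^{(\ell+1)} - g_i^{(\ell)} = h_i(\ell), \qquad h_i(\ell) := 2\,\partial_x g_{i-1}^{(\ell)} + (\partial_x^2+V) g_{i-2}^{(\ell)}.$$
By the induction hypothesis, $h_i$ is a polynomial in $\ell$ of degree at most $i-2$. Summing the differences (Faulhaber) gives $g_i^{(\ell)} = g_i^{(0)} + \sum_{j<\ell} h_i(j)$, a polynomial of degree at most $i-1$.

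The main subtlety is the boundary convention $g_i^{(\ell)} = 0$ for $i > 2\ell$. We need the polynomial formula to hold for all $\ell \geq 0$, including those $\ell$ with $2\ell < i$ where the term is absent. The fix is to define $f_i(\ell) := \sum_{j<\ell} h_i(j)$ with $f_i(0) = 0$, and to check that the recursion with the zero convention is exactly this telescoping identity. This works because the convention is consistent: the recursion for $i > 2\ell+2$ automatically produces $0$. The coefficient of $\partial_x^{2\ell-i}$ with $i > 2\ell$ would be a negative power, so we must check that the polynomial vanishes at those integers. We can sidestep this by stating the identity only for indices $i \leq 2\ell$, where it is needed, and defining $f_i$ via the summation formula starting from the first $\ell$ at which the term appears, with the zero values for smaller $\ell$ built in. Since those zeros are forced by the recursion, the summation formula from $\ell=0$ reproduces them.

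Exact degree $i-1$ requires the leading coefficient to be nonzero. Tracking leading terms, the top coefficient of $h_i$ comes from the $\ell^{i-3}$ coefficients of the lower-order terms. Note that $2\partial_x g_{i-1}$ contributes $\ell^{i-2}$ only via $\partial_x$ of a leading coefficient. For even $i$ one gets the leading term $\binom{\ell}{i/2} V^{i/2}$ type behavior, but degree $i-1$ is not obviously attained (for instance $g_3^{(\ell)} = \ell(\ell-1)V'$, which has degree $2$ in $\ell$). I would verify exactness by computing the $\ell^{i-1}$ coefficient inductively as a constant times $\partial_x^{i-2}V$, coming from repeated $2\partial_x$ applied to $\ell V$. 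The main obstacle is this exact-degree bookkeeping; for the applications, degree at most $i-1$ suffices.
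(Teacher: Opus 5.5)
Your proposal is correct and follows essentially the paper's proof: the same coefficient recursion $g_i^{(\ell+1)} = g_i^{(\ell)} + 2\partial_x g_{i-1}^{(\ell)} + (\partial_x^2+V) g_{i-2}^{(\ell)}$, the same base case $g_2^{(\ell)}=\ell V$, and the same telescoping in $\ell$ combined with induction on $i$. Your telescoping from $\ell=0$ with the zero convention, after checking that the recursion holds for every $i\ge 2$, is slightly tidier bookkeeping than the paper's start at $\ell=\lceil i/2\rceil$ with separate identities for $i=2\ell+1,2\ell+2$.

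On the exact degree, your leading-coefficient plan settles the question, but negatively. Since $(\partial_x^2+V)g_{i-2}$ has degree at most $i-3$ in $\ell$, the $\ell^{i-1}$ coefficient of $f_i$ satisfies $c_i=\frac{2}{i-1}\partial_x c_{i-1}$, so $c_i=\frac{2^{i-2}}{(i-1)!}\partial_x^{i-2}V$. This vanishes for instance when $V$ is constant, where $f_4(\ell)=\binom{\ell}{2}V^2$ has degree $2$. So in general only $\deg f_i\le i-1$ holds, which is all the paper's argument delivers and all that Proposition~\ref{prop:comp_prec} uses. Note also that your example $g_3^{(\ell)}=\ell(\ell-1)\partial_x V$ does attain degree $2=i-1$.
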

\begin{proof} Expanding $(\partial_x^2+V)^{\ell}$, it is clear that there exists $C^\infty$ functions $g_{\ell,i}$ such that
$$
\forall \ell \geq 0, \quad (\partial_x^2+V)^{\ell} =\partial_x^{2\ell} + \sum_{i=2}^{2\ell} g_{\ell,i} \partial_x^{2\ell-i}.
$$
What we really have to prove is that $g_{\ell,i}$ is polynomial with respect to $\ell$. So we are going to derive an induction relation for these functions.

Indeed, for all $\ell\geq 0$, we have
\begin{equation*}
\begin{split}
&(\partial_x^2+V)^{\ell+1}\\ &= (\partial_x^2 + V) (\partial_x^2+V)^{\ell} \\
&=  (\partial_x^2 + V) \partial_x^{2\ell} + \sum_{i=2}^{2\ell} ((\partial_x^2+V) g_{\ell,i})  \partial_x^{2\ell-i} +  g_{\ell,i}  \partial_x^{2\ell+2-i} +  2 (\partial_x g_{\ell,i} ) \partial_x^{2\ell+1-i}\\
&= (\partial_x^2 + V) \partial_x^{2\ell} + \sum_{i=4}^{2\ell+2} ((\partial_x^2+V) g_{\ell,i-2})  \partial_x^{2\ell+2-i} + \sum_{i=2}^{2\ell}  g_{\ell,i}  \partial_x^{2\ell+2-i} + \sum_{i=3}^{2\ell+1} 2 (\partial_x g_{\ell,i-1} ) \partial_x^{2\ell+2-i}.
\end{split}
\end{equation*}
For the special case $i=2$, we get
$$
 g_{\ell+1,2} = g_{\ell,2} + V,
$$
which implies
$$
 \forall \ell \geq 0, \quad g_{\ell,2} = \ell V.
$$
Additionally, for $i=3$, we obtain
$$
 g_{\ell+1,3} = g_{\ell,3} + 2 \partial_x g_{\ell,2} , 
$$
and consequently,
$$
 \forall \ell \geq 0, \quad g_{\ell,3}= \sum_{k=0}^{\ell-1} 2 \partial_x g_{k,2} = 2 \partial_x V  \sum_{k=0}^{\ell-1} k = (\ell-1)\ell \partial_x V . 
$$
Generally, we set $g_{\ell,i}=0$ if $i\notin \{2,\cdots,2\ell\}$ and we deduce that
\begin{equation}
\label{eq:g1}
\forall 4\leq i\leq 2\ell, \quad g_{\ell+1,i} = ((\partial_x^2+V) g_{\ell,i-2})  + g_{\ell,i} + 2 \partial_x g_{\ell,i-1}.
\end{equation}
Moreover, the following identities hold true,
$$
 g_{\ell+1,2\ell+2} =   (\partial_x^2 + V)  g_{\ell,2\ell} \quad \text{and} \quad
 g_{\ell+1,2\ell+1}  =  (\partial_x^2 + V)  g_{\ell,2\ell-1} + 2 \partial_x  g_{\ell,2\ell}.
$$

Finally, by \eqref{eq:g1} and $i\geq 4$, we have
\begin{equation*}
\begin{split}
 \forall \ell \geq \lceil i/2 \rceil +1, \quad g_{\ell,i} &=g_{\lceil i/2 \rceil,i}  + \sum_{k= \lceil i/2 \rceil}^{\ell-1} g_{k+1,i} - g_{k,i} \\
 &= g_{\lceil i/2 \rceil,i}  + \sum_{k= \lceil i/2 \rceil}^{\ell-1}   ((\partial_x^2+V) g_{k,i-2})   + 2 \partial_x g_{k,i-1}
\end{split}
\end{equation*}
So by induction, it is clear that $g_{\ell,i}$ is a polynomial with respect to $\ell$ of degree $i-1$. 
\end{proof}

Finally, by means of Lemma~\ref{lem:appendix}, we reformulate the compatibility conditions and represent them in terms of the computed polynomials.

\begin{proposition} \label{prop:comp_prec} For $y\in \{0,1\}$, there exists a family of polynomials $(P_{y,j})_{j\geq 1} \subset \mathbb{R}[X]$, with $\mathrm{deg} P_{y,j} \leq 2j$ such that for all $a\geq 0$, 
\begin{equation}
\label{eq:comp_prec}
H^{2a}_{V} = \Big\{ u \in H^{2a} \ | \ \forall \ell < a,\forall y \in \{0,1\}, \ \big(\partial_x^{2\ell} - \sum_{j=1}^{\ell-1} P_{y,j}(\ell)  \partial_x^{2\ell-(2j+1)} \big) u(y) = 0 \Big\}.
\end{equation}
\end{proposition}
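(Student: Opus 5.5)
Proposal. The idea is to eliminate the even-order derivatives $\partial_x^{2\ell-2m}$, $m\ge 1$, from the boundary conditions by a triangular (Gaussian-elimination) argument, using the lower-order compatibility conditions already imposed. After this only odd-order derivatives remain. Their coefficients are evaluated at the fixed point $y$ and hence are scalars, and we track that they are polynomial in $\ell$.

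\emph{Setup.} We argue by induction on $\ell$. Fix $y\in\{0,1\}$ and $u\in H^{2a}$. The claim to prove is: for every $L<a$, the conditions $(\partial_x^2+V)^\ell u(y)=0$ for $\ell\le L$ are equivalent to the conditions $\big(\partial_x^{2\ell}-\sum_{j=1}^{\ell-1}P_{y,j}(\ell)\partial_x^{2\ell-2j-1}\big)u(y)=0$ for $\ell\le L$. Lemma~\ref{lem:appendix}, evaluated at $y$, gives
\[
(\partial_x^2+V)^\ell u(y)=\partial_x^{2\ell}u(y)+\sum_{m=1}^{\ell}c_{2m}(\ell)\,\partial_x^{2\ell-2m}u(y)+\sum_{j=1}^{\ell-1}c_{2j+1}(\ell)\,\partial_x^{2\ell-2j-1}u(y),
\]
where $c_i(\ell)=f_i(\ell)(y)$ is a real polynomial in $\ell$ of degree $i-1$.

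\emph{Elimination step.} Assume the conditions of rank $\ell'<\ell$ hold. By the induction hypothesis, each even derivative $\partial_x^{2(\ell-m)}u(y)$ with $1\le m\le\ell$ can be rewritten as $\sum_{j'}P_{y,j'}(\ell-m)\,\partial_x^{2(\ell-m)-2j'-1}u(y)$. For $m=\ell$ this gives $u(y)=0$. Substituting, the even terms become combinations of odd derivatives $\partial_x^{2\ell-2j-1}u(y)$, where $j=m+j'$ ranges over $2\le j\le \ell-1$. The coefficient of $\partial_x^{2\ell-2j-1}$ is therefore
\[
-P_{y,j}(\ell)=c_{2j+1}(\ell)+\sum_{m+j'=j,\ m,j'\ge1}c_{2m}(\ell)\,P_{y,j'}(\ell-m).
\]
This formula defines $P_{y,j}$ recursively, and $P_{y,j}$ depends only on $j$, not on $\ell$. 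This is the crucial point: the substitution produces coefficients that are polynomials in $\ell$ with $j$-dependent structure. The degree bound then follows by induction on $j$. We have $\deg c_{2j+1}=2j$, and $\deg\big(c_{2m}(\ell)P_{y,j'}(\ell-m)\big)\le (2m-1)+2j'<2j$. Hence $\deg P_{y,j}\le 2j$.

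\emph{Equivalence.} Because the substitution is triangular, the two systems are equivalent. Each new condition equals the old condition of the same rank plus a linear combination of lower-rank old conditions, and conversely. Equation \eqref{eq:comp_prec} follows.

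\emph{Main obstacle.} The main care is needed in verifying that the recursion closes. Rewriting $\partial_x^{2(\ell-m)}u(y)$ through the rank-$(\ell-m)$ condition must yield only polynomial dependence on $\ell$. This holds because the shift $\ell\mapsto\ell-m$ preserves polynomiality. We must also check that no odd term of order higher than $2\ell-3$ appears; this holds since $j\ge1$. Finally, the index ranges must be consistent: a term with $j'\le \ell-m-1$ gives $j\le \ell-1$.
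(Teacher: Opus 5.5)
Your proposal is correct and takes essentially the same route as the paper. You substitute the lower-rank conditions into the even-order terms coming from Lemma~\ref{lem:appendix}, which gives exactly the paper's recursion $P_{y,j}(\ell)=Q_{2j+1}(\ell)+\sum_{m=1}^{j-1}Q_{2m}(\ell)P_{y,j-m}(\ell-m)$ (with $Q_i=-c_i$) and the same degree count, and your explicit unit-triangular relation between the old and new conditions also settles the reverse inclusion, which the paper leaves implicit.
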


\begin{proof} Without loss of generality, we focus on the left boundary $y=0$ and, for readability, we omit the subscript $y$. We prove~\eqref{eq:comp_prec} by induction over $a\geq 0$. For the initialization, it suffices to note that 
$$
H^{0}_{V}=  H^{0}_{\mathrm{Dir}}, \quad H^{2}_{V}=  H^{2}_{\mathrm{Dir}} \quad \mathrm{and} \quad H^{4}_{V}=  H^{4}_{\mathrm{Dir}}.
$$
Therefore, let $a \geq 3$ and consider $u \in H^{2a}_{V}$. By Lemma \ref{lem:appendix}, we have
\begin{equation}
\label{eq:Q}
(\partial_x^2+V)^\ell u(0)  =   \partial_x^{2\ell}u(0) - \sum_{i=2}^{2\ell} Q_{i}(\ell) \partial_x^{2\ell-i} u(0),
\end{equation}
where $Q_{i} \in \mathbb{R}[X]$ is defined by $Q_{y,i}(\ell) := -f_i(\ell)(y)$ and satisfies $\mathrm{deg}\, Q_{i} \leq i-1$.

Using that $u(0)=\partial_x^2 u(0) = \partial_x^4 u(0) = 0$, we deduce of \eqref{eq:Q} and the fact that $ (\partial_x^2+V)^\ell u(0) =0$ for $\ell = 0, \ldots, a-1$,
$$
\partial_x^{2\ell}u(0) - \sum_{i=1}^{\ell-2} Q_{2i}(\ell) \partial_x^{2\ell-2i} u(0) -  \sum_{i=1}^{\ell-1} Q_{2i+1}(\ell) \partial_x^{2\ell-(2i+1)} u(0)  =0.
$$
Then, we define the polynomials $P_i \in \mathbb{R}[X]$, $i\geq 1$, by means of the recursion formula
\begin{equation} \label{def:P}
P_{i}(\ell) =  Q_{2i+1}(\ell) + \sum_{m=1 }^{i-1}  Q_{2m}(\ell)   P_{i-m}(\ell-m) .
\end{equation}
Note that $\mathrm{deg}\, P_i \leq 2i$. Then, by induction we have $ \partial_x^{2\ell-2i} u(0) = \sum_{j=1}^{\ell-i-1} P_{j}(\ell-i)  \partial_x^{2\ell-2i-(2j+1)} u(0) $ and thus, we get
$$
\partial_x^{2\ell}u(0) - \sum_{i=1}^{\ell-2} Q_{2i}(\ell)  \sum_{m=1}^{\ell-i-1} P_{m}(\ell-i)  \partial_x^{2\ell-2i-(2m+1)} u(0) -  \sum_{i=1}^{\ell-1} Q_{2i+1}(\ell) \partial_x^{2\ell-(2i+1)} u(0)  =0.
$$
Re-ordering these sum, we conclude that
$$
\partial_x^{2\ell}u(0) -  \sum_{i=1}^{\ell-1} \Big( \underbrace{  Q_{2i+1}(\ell) + \sum_{m=1 }^{i-1}   Q_{2m}(\ell)   P_{i-m}(\ell-m) }_{= P_{i}(\ell) } \Big) \partial_x^{2\ell-(2i+1)} u(0)  =0,
$$
which justifies definition~\eqref{def:P} of $P_i(\ell)$ and concludes the proof.
\end{proof}
The polynomial representation of the compatibility conditions in~\eqref{eq:comp_prec} is crucial for the construction of the corrector functions.


\subsection{Change of variable in terms of correctors} \label{sec:corr}

Throughout this paper, for $n\geq 0$ and $y\in~\{0,1\}$, we consider a family of smooth functions $ \varphi_{n,y} \in C^\infty([0,1];\mathbb{R})$ satisfying
$$
\forall z\in \{0,1\},\forall i\geq 0, \quad \partial_x^{i}    \varphi_{n,y} (z) = \mathds{1}_{z=y} \mathds{1}_{i=n}.
$$
Moreover, in order to construct the correctors, we introduce the following operators.

\begin{definition} \label{def:integral} Given $n\geq 0$ and $y\in \{0,1\}$, the operator $\mathcal{I}_{n,y}$ is defined inductively by
$$
\forall u\in L^1,\forall x\in [0,1], \quad \mathcal{I}_{n+1,y}u(x) = \int_y^x  (\mathcal{I}_{n,y}u)(z) \, \mathrm{d}z \quad \mathrm{and} \quad   \mathcal{I}_{0,y} u = u.
$$
\end{definition}

\begin{remark} An alternative concise way to define $\mathcal{I}_{n,y}$ for $n\geq 1$ is
$$
\mathcal{I}_{n,y} u(x) = \frac1{(n-1)!} \int_y^x (x-z)^{n-1}  u(z) \mathrm{d}z.
$$
\end{remark}
In the following proposition, we construct a family of correctors in terms of the smooth functions $\varphi_{n,y}$ and the operators $\mathcal{I}_{n,y}$.
\begin{proposition} \label{prop:constr_correct} There exist real coefficients $\alpha_{i,n,y} \in \mathbb{R}$, with $i\geq 1$ and $1\leq n \leq 2i+1$, such that for all $ k \geq 1$, the correctors defined by
\begin{equation} \label{def:ck}
\mathfrak{C}_k u = \sum_{y\in \{0,1\}} \sum_{i=1}^{k-1} \sum_{n=1}^{2i+1} \alpha_{i,n,y}  \varphi_{2i+1-n,y} \mathcal{I}_{n,y} u
\end{equation}
satisfy
$$
 \exp(\mathfrak{C}_k)  H^{2k+2}_{\mathrm{Dir}} \subset  H^{2k+2}_{V}.
$$
\end{proposition}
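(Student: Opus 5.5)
The plan is to reduce the statement to an identity between jets at each boundary point and then fix the coefficients $\alpha_{i,n,y}$ by induction on $i$, using the polynomial description of $H^{2k+2}_V$ from Proposition~\ref{prop:comp_prec}.

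\textbf{Analytic preliminaries.} Each $\mathcal{I}_{n,y}$ maps $H^s$ into $H^{s+n}\subset H^s$, and multiplication by the smooth function $\varphi_{2i+1-n,y}$ is bounded on $H^s$. Hence $\mathfrak{C}_k$ is bounded on $H^{2k+2}$, and $\exp(\mathfrak{C}_k)=\sum_m \mathfrak{C}_k^m/m!$ converges there. So $\exp(\mathfrak{C}_k)H^{2k+2}_{\mathrm{Dir}}\subset H^{2k+2}$, and it remains to check the boundary conditions of~\eqref{eq:comp_prec} with $a=k+1$, that is, for $\ell\le k$.

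\textbf{Jet computation at one endpoint.} Fix $y$ and compute derivatives at $y$ by the Leibniz rule.
\begin{itemize}
\item $\partial_x^q\varphi_{m,y}(y)=\mathds{1}_{q=m}$, and $\partial_x^{r}\mathcal{I}_{n,y}w(y)$ equals $\partial_x^{r-n}w(y)$ if $r\ge n$ and $0$ otherwise.
\item Therefore
$$\partial_x^p\big(\varphi_{2i+1-n,y}\mathcal{I}_{n,y}w\big)(y)=\binom{p}{2i+1-n}\,\partial_x^{p-2i-1}w(y).$$
\item All derivatives of $\varphi_{\cdot,y}$ vanish at the other endpoint $1-y$, so the $y$-terms of $\mathfrak{C}_k$ do not affect the jet at $1-y$.
\end{itemize}
Consequently
$$\partial_x^p(\mathfrak{C}_kw)(y)=\sum_{i=1}^{k-1}\beta_{i,y}(p)\,\partial_x^{p-2i-1}w(y),\qquad \beta_{i,y}(p)=\sum_{n=1}^{2i+1}\alpha_{i,n,y}\binom{p}{2i+1-n}.$$
Here $\beta_{i,y}$ is a polynomial in $p$ of degree $\le 2i$. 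Since $\binom{p}{0},\dots,\binom{p}{2i}$ form a basis of the polynomials of degree $\le 2i$, $\beta_{i,y}$ can be made any prescribed polynomial of degree $\le 2i$ by choosing $\alpha_{i,\cdot,y}$.

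Iterating this formula for $\mathfrak{C}_k^m$ and summing the exponential series gives
$$\partial_x^{p}(e^{\mathfrak{C}_k}v)(y)=\partial_x^pv(y)+\sum_{s\ge 2}R_{s,y}(p)\,\partial_x^{p-s}v(y).$$
Each $R_{s,y}$ is a polynomial: a finite sum of products $\beta_{i_1}(p)\beta_{i_2}(p-2i_1-1)\cdots$ with $\sum(2i_r+1)=s$. It has degree $\le s-m\le s-1$, where $m$ is the number of factors, and it involves only $\beta_{i}$ with $2i+1\le s$.

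\textbf{Inductive choice of coefficients.} Take $v\in H^{2k+2}_{\mathrm{Dir}}$, so that all even derivatives $\partial_x^{2r}v(y)$ with $r\le k$ vanish. Insert $u=e^{\mathfrak{C}_k}v$ into the condition of Proposition~\ref{prop:comp_prec}:
$$\Big(\partial_x^{2\ell}-\sum_{j=1}^{\ell-1}P_{y,j}(\ell)\partial_x^{2\ell-2j-1}\Big)u(y)=0.$$
Expand everything in the odd jet values $\partial_x^{2\ell-2i-1}v(y)$, which are free. The coefficient of $\partial_x^{2\ell-2i-1}v(y)$ is
$$\beta_{i,y}(2\ell)+\Big[\text{terms built from }\beta_{j,y},\ j<i,\text{ and }P_{y,j},\ j\le i\Big].$$
Using $\deg P_{y,j}\le 2j$ and the degree bounds on the $R_{s,y}$, the bracket is a polynomial in $\ell$ of degree $\le 2i$. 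By the basis remark above, we can choose $\alpha_{i,\cdot,y}$ so that the whole coefficient vanishes identically in $\ell$; this requires $\beta_{i,y}(2\ell)$, a polynomial of degree $\le 2i$ in $\ell$, to match the bracket. We proceed by induction on $i=1,2,\dots$.

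The coefficients obtained this way are independent of $k$, because for $\ell\le k$ only shifts $2i+1\le 2\ell-1$ occur. Hence the truncation $i\le k-1$ in~\eqref{def:ck} is harmless.

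\textbf{Expected main obstacle.} The hard step is the bookkeeping of the nonlinear composition terms coming from $\mathfrak{C}_k^m$, $m\ge2$, together with the products $P_{y,j}(\ell)R_{s,y}(\cdot)$. One must verify two things: that these terms only ever involve lower blocks $\beta_{j,y}$ with $j<i$ (the triangular structure), and that their degree in $\ell$ does not exceed $2i$, which is what makes the linear solve for $\alpha_{i,\cdot,y}$ possible.

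A subtlety is that composition terms with even total shift also multiply jets of $v$ that could be odd. One must check that for $u=e^{\mathfrak{C}_k}v$ they are absorbed consistently, i.e. that the coefficient of each odd jet $\partial_x^{2\ell-2i-1}v(y)$ collects all of them. I would handle this by writing the jet map $J_y$ of $e^{\mathfrak{C}_k}$ as a unipotent lower-triangular matrix whose entries are polynomial in the row index. The condition then becomes that a lower-triangular matrix, built from the rows $(\partial_x^{2\ell}-\sum_j P_{y,j}(\ell)\partial_x^{2\ell-2j-1})$, applied to $J_y$ kills all odd columns. This can be solved diagonal by diagonal.
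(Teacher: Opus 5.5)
Your proposal is correct and is essentially the paper's argument. You reduce via Proposition~\ref{prop:comp_prec} to jet identities at each endpoint for $\ell\le k$ and compute jets of products of the $\varphi_{2i+1-n,y}\mathcal{I}_{n,y}$ by Leibniz. You then use parity to keep only odd jets of $v$; the paper does this by splitting $e^{\mathfrak{C}_k}=\sinh(\mathfrak{C}_k)+\cosh(\mathfrak{C}_k)$. Finally you solve the resulting triangular system of polynomial identities in $\ell$ block by block, using that $\binom{2X}{m}$, $0\le m\le 2i$, form a basis of $\mathbb{R}_{\le 2i}[X]$.

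Your degree count $\deg R_{s,y}\le s-m$ for the composition terms is more explicit than the paper's one-line justification. You should state in your jet formula the convention that terms with $p<2i+1$ (jets of negative order) vanish.
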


We first prove Proposition~\ref{prop:constr_correct} in the model case $k=2$, the general case then follows.

\medskip

\begin{ProofOf}{Proposition~\ref{prop:constr_correct} for $k=2$}
First, we note that for all real coefficients $\alpha$, the corrector $\mathfrak{C}_2$ maps $H^{6}$ into itself. Moreover, due to Proposition \ref{prop:comp_prec} we have that  $ \exp(\mathfrak{C}_2) H^{6}_{\mathrm{Dir}} \subset  H^{6}_{V}$ if and only if for $y\in \{0,1\}$,
\begin{align*}
\forall u\in H^{2}_{\mathrm{Dir}}, \quad e^{\mathfrak{C}_2} u(y)=0, \\
\forall u\in H^{4}_{\mathrm{Dir}}, \quad \partial_x^{2} e^{\mathfrak{C}_2} u(y)=0, \\
\forall u\in H^{6}_{\mathrm{Dir}}, \quad \big(\partial_x^{4} - P_{y,1}(2)  \partial_x \big) e^{\mathfrak{C}_2} u(y)=0. 
\end{align*}
The first two equations are satisfied anyway, so lets concentrate on the third one. The point is just to rewrite this property as a system of equations on $\alpha$ and finally to solve it. Thus, since $ \exp(\mathfrak{C}_2) =  \sinh(\mathfrak{C}_2) +  \cosh(\mathfrak{C}_2)$, the equations we have to solve are
\begin{align} \label{eq:eventerms}
\partial_x^{4}  \cosh(\mathfrak{C}_2) u(y) = P_{y,1}(2)  \partial_x \sinh(\mathfrak{C}_2) u(y),
\end{align}
for $y \in \{0, 1\}$, what corresponds to the even terms. Similarly, for the odd terms we get
\begin{align} \label{eq:oddterms}
\partial_x^{4}  \sinh(\mathfrak{C}_2) u(y) = P_{y,1}(2)  \partial_x \cosh(\mathfrak{C}_2) u(y).
\end{align}
Now, we note that, for $ n = 1, 2, 3$ we have
\begin{align*}
\partial_x^p  \big( \varphi_{3-n,y}  \mathcal{I}_{n,y} u  \big)(y)
=\left\{ \begin{array}{cll} \displaystyle  \binom{4}{3-n}  \partial_x u(y) & p=4 \\
0 & p=1, 2, 3. \end{array}\right.
\end{align*}
More generally, given $q\geq 2$, $y_a\in \{0,1\}$ and $1 \leq n_a \leq 2i_a+1$, $i_a\geq 1$  we have
\begin{equation*}
\partial_x^4  \big( \prod_{1\leq a \leq q}^{\rightarrow} \varphi_{2i_a+1-n_a,y_a}  \mathcal{I}_{n_a,y_a} \big) u(y) =0.
\end{equation*}
Thus, the terms given in~\eqref{eq:eventerms} are equal to zero, whereas for the odd terms in~\eqref{eq:oddterms} we get
\begin{equation*}
\partial_x^{4}  \sinh(\mathfrak{C}_2) u(y) = \partial_x^{4} \mathfrak{C}_2 u(y) =  (6\alpha_{1,1,y} +  4\alpha_{1,2,y} + \alpha_{1,3,y})  \partial_x u(y),
\end{equation*}
and
\begin{equation*}
 P_{y,1}(2)  \partial_x\cosh(\mathfrak{C}_2) u(y) =  P_{y,1}(2)  \partial_x u(y).
\end{equation*}
As a consequence, the system we have to solve is for $y\in \{0,1\}$,
\begin{equation} \label{eq:alpha_k2}
6\alpha_{1,1,y} +  4\alpha_{1,2,y} + \alpha_{1,3,y}=  P_{y,1}(2) = -2\partial_xV(y),
\end{equation}
and thus, there exist coefficients $\alpha_{1,n,y}, 1 \leq n \leq 3$, such that $\exp(\mathfrak{C}_2) u \in  H^{6}_{V}$ for any $u \in H^{6}_{\mathrm{Dir}}$.
\end{ProofOf}

\begin{ProofOf}{Proposition~\ref{prop:constr_correct} for general $k > 2$}
First, we note that whatever the coefficients $\alpha$, the correctors $\mathfrak{C}_k$ maps $H^{2k+2}$ into itself. It suffices to check the compatibility conditions. More precisely, by Proposition \ref{prop:comp_prec}, we have that  $ \exp(\mathfrak{C}_k)   H^{2k+2}_{\mathrm{Dir}} \subset  H^{2k+2}_{V}$ if and only if
$$
\forall y\in \{0,1\},\forall \ell\leq k,\forall u\in H^{2\ell+2}_{\mathrm{Dir}}, \quad \big(\partial_x^{2\ell} - \sum_{j=1}^{\ell-1} P_{y,j}(\ell)  \partial_x^{2\ell-(2j+1)} \big) e^{\mathfrak{C}_k} u(y)=0.
$$
Then, the point is just to rewrite this property as a system of equations on $\alpha$ and finally to solve it. So from now, without loss of generality, we consider $u\in  H^{\infty}_{\mathrm{Dir}} = \bigcap_{k \geq 1} H^{2k}_{\mathrm{Dir}} $, $y\in \{0,1\}$ and $\ell\leq k$. Now, we note that, for all $1 \leq n \leq 2i+1$, $i\geq 1$ we have
\begin{equation}
\label{eq:utile}
\partial_x^p  \big( \varphi_{2i+1-n,y}  \mathcal{I}_{n,y} u         \big)(y) =  
 \left\{ \begin{array}{cll} \displaystyle  \binom{p}{2i+1-n}  \partial_x^{p-(2i+1)} u(y) &  \mathrm{if} \quad p\geq 2i+1 \\
0 & \mathrm{else}. \end{array}\right.
\end{equation}
More generally, given $q\geq 1$, $y_a\in \{0,1\}$, $1 \leq n_a \leq 2i_a+1$, $i_a\geq 1$ for $1\leq a \leq q$ we have
\begin{equation*}
\begin{split}
&\partial_x^p  \big( \prod_{1\leq a \leq q}^{\rightarrow} \varphi_{2i_a+1-n_a,y_a}  \mathcal{I}_{n_a,y_a} \big) u        (y)\\ &= \left\{ \begin{array}{cll} \displaystyle \prod_{ 1 \leq a \leq q}  \binom{p- 2 i_{\leq a-1}-a+1 }{2 i_a+1-n_a}  \partial_x^{p-2i_{\leq q} -q} u(y) & \displaystyle \mathrm{if} \quad p\geq 2i_{\leq q}+q \ \mathrm{and} \quad y = \; y_1=\cdots =  y_q\\
0 & \mathrm{else} \end{array}\right.
\end{split}
\end{equation*}
where we used the notation
\begin{equation*} 
\forall b \leq q, \quad i_{\leq b} = \sum_{1\leq  a \leq b}  i_a.
\end{equation*}
Thus, since $u\in  H^{\infty}_{\mathrm{Dir}} $, the equation we have to solve is
$$
\partial_x^{2\ell}  \sinh(\mathfrak{C}_k) u(y) = \sum_{j=1}^{\ell-1} P_{y,j}(\ell)  \partial_x^{2\ell-(2j+1)}\cosh(\mathfrak{C}_k) u(y).
$$
Then we expand theses terms and using \eqref{eq:utile} and we get
\begin{equation*}
\begin{split}
&\partial_x^{2\ell}  \sinh(\mathfrak{C}_k) u(y)  \\
=& \sum_{q\geq 0} \frac1{(2q+1)!}   \sum_{\substack{1\leq i_1 \leq k-1 \\  1\leq n_1 \leq 2 i_1+1 }}  \cdots \sum_{\substack{1\leq i_{2q+1} \leq k-1 \\  1\leq n_{2q+1} \leq 2 i_{2q+1}+1 }}   \partial_x^{2\ell}   \prod_{1\leq a \leq 2q+1}^{\rightarrow} \alpha_{i_a,n_a,y} \varphi_{2i_a+1-n_a,y}  \mathcal{I}_{n_a,y}  u        (y)\\
 =& \sum_{i=1}^{\ell-1}  \partial_x^{2\ell-(2i+1)} u(y) \sum_{\substack{ i_{\leq 2q+1} + q = i \\ q\geq 0  }} \sum_{\substack{ 1\leq n_a \leq 2i_a+1 \\ \forall 1\leq a \leq 2q+1}}   \prod_{ 1 \leq a \leq 2q+1} \frac{\alpha_{i_a,n_a,y}}{a}  \binom{2\ell- 2 i_{\leq a-1}-a+1 }{2 i_a+1-n_a}.
\end{split}
\end{equation*}
and

\begin{equation*}
\begin{split}
& \sum_{j=1}^{\ell-1} P_{y,j}(\ell)  \partial_x^{2\ell-(2j+1)}\cosh(\mathfrak{C}_k) u(y)   \\
=& \sum_{q\geq 0} \frac1{(2q)!}   \sum_{\substack{1\leq i_1 \leq k-1 \\  1\leq n_1 \leq 2 i_1+1 }}  \cdots \sum_{\substack{1\leq i_{2q} \leq k-1 \\  1\leq n_{2q} \leq 2 i_{2q}+1 }}    \sum_{j=1}^{\ell-1} P_{y,j}(\ell)  \partial_x^{2\ell-(2j+1)}   \prod_{1\leq a \leq 2q}^{\rightarrow} \alpha_{i_a,n_a,y} \varphi_{2i_a+1-n_a,y}  \mathcal{I}_{n_a,y}  u        (y)\\
 =& \sum_{i=1}^{\ell-1}  \partial_x^{2\ell-(2i+1)} u(y) \sum_{\substack{i_{\leq 2q} + q +j = i \\ q\geq 0  }} \sum_{\substack{ 1\leq n_a \leq 2i_a+1 \\ \forall 1\leq a \leq 2q}}   P_{y,j}(\ell) \prod_{ 1 \leq a \leq 2q} \frac{\alpha_{i_a,n_a,y}}{a}  \binom{2\ell-2j- 2 i_{\leq a-1}-a }{2 i_a+1-n_a}.
\end{split}
\end{equation*}
In the particular case of $q=0$, we get $P_{y,i}(\ell) \partial^{2\ell-(2i+1)}_xu(y)$ for $\ell \leq k$, $i \geq 1$ and $y \in \{0, 1\}$.
As a consequence, the system we have to solve is for all $\ell \geq 0$, $i\geq 1$, $y\in \{0,1\}$
\begin{equation} \label{eq:mon_system}
\begin{split}
\sum_{1\leq n \leq 2i+1} &\alpha_{i,n,y} \binom{2\ell }{2 i+1-n} \\
&=  \sum_{\substack{ i_{\leq 2q} + q +j = i \\ q\geq 0  }} \sum_{\substack{ 1\leq n_a \leq 2i_a+1 \\ \forall 1\leq a \leq 2q}}   P_{y,j}(\ell) \prod_{ 1 \leq a \leq 2q} \frac{\alpha_{i_a,n_a,y}}{a}  \binom{2\ell-2j- 2 i_{\leq a-1}-a }{2 i_a+1-n_a} \\ 
&- \sum_{\substack{ i_{\leq 2q+1} + q = i \\ q\geq 1  }} \sum_{\substack{ 1\leq n_a \leq 2i_a+1 \\ \forall 1\leq a \leq 2q+1}}   \prod_{ 1 \leq a \leq 2q+1} \frac{\alpha_{i_a,n_a,y}}{a}  \binom{2\ell- 2 i_{\leq a-1}-a+1 }{2 i_a+1-n_a} =: \mathcal{P}_{i,y}(\ell).
\end{split}
\end{equation}
First, we note that the indices in the sums do not depend on $\ell$ and $b$ being fixed that we have
\begin{center}
$\displaystyle \ell  \mapsto \binom{2\ell}{b} = \frac{(2\ell) \cdots  (2\ell - b +1)}{ b!}$ is a polynomial of degree $b$.
\end{center}
Thus, since $P_{y,j}$ is a polynomial of degree $\mathrm{deg}\, P_{y,j} \leq 2j$, it follows that $\mathcal{P}_{i,y}$ is a polynomial of degree smaller or equal to $2i$. So we can forget the variable $\ell$ and consider \eqref{eq:mon_system} as a system of equalities between formal polynomials indexed by $i$. Then, to solve it, it suffices to note that it is triangular with non zero coefficients on the diagonal. Indeed, on the one hand $\mathcal{P}_{i,y}$ only depends on $\alpha_{b,n,y}$ with $1\leq b\leq i-1$ and $1\leq n\leq 2b+1$ and on the other hand
$$
\beta \in \mathbb{R}^{2i+1} \mapsto \sum_{1\leq n \leq 2i+1} \beta_n \frac{(2X) \cdots  (2X - (2 i+1-n) +1)}{ (2 i+1-n)!}  \in \mathbb{R}_{\leq 2i}[X]
$$
is clearly an isomorphism where $ \mathbb{R}_{\leq 2i}[X]$ denotes the space of the polynomials of degree smaller than or equal to $2i$.
\end{ProofOf}

By means of~\eqref{eq:mon_system}, we can compute the coefficients $\alpha_{i,n,y}$ in terms of the polynomials $P_{y,i}$.

\begin{table}[H]
\captionsetup{labelformat=simple,name=Table,labelfont=bf,textfont=it}
\caption{Values of $P_{y,i}(\ell)$ for $y \in \{0,1\}$ and $1 \leq i \leq 3$, evaluated in $\ell =2, 3, 4$.} \label{table:P}
\centering

\begin{tabular}{p{0.03\textwidth}|p{0.1\textwidth}|p{0.31\textwidth}|p{0.40\textwidth}}

 \toprule
 $i \backslash \ell$ & 2 & 3 & 4 \\
 \midrule
 1 &  $-2\partial_xV(y)$ & $-6\partial_xV(y)$ & $-12\partial_xV(y)$  \\

&&\\[-2.5ex]
\hline
&&\\[-2.5ex]

 2 &   $0$  & $-4\partial_x^3V(y)$   & $-24\partial_x^3V(y)$ \\

&&\\[-2.5ex]
\hline
&&\\[-2.5ex]

 3 & $0$ & $0$ & $-6\partial_x^5V(y) + 12\partial_xV(y)\partial_x^2V(y)$    \\
 \bottomrule
\end{tabular}
\end{table}

In the following example, we give possible representations of the correctors $\mathfrak{C}_2, \mathfrak{C}_3$ and $\mathfrak{C}_4$, where we make us of some values $P_{y,i}(\ell)$, see Table~\ref{table:P}.

\begin{example} \label{ex:c2c3c4}
From the system~\eqref{eq:mon_system} follows that for $k \gg i$, the coefficients $\alpha_{i,n,y}$ are uniquely defined for all $1 \leq n \leq 2i+1$ and $y \in \{0,1\}$. Otherwise, the system is underdetermined and we have a lot of degrees of freedom, hence many coefficients $\alpha_{i,n,y}$ in the definition of $\mathfrak{C}_k$~\eqref{def:ck} can be set equal to zero.
For $1 \leq n \leq 3$ and $y \in \{0,1\}$, the coefficients $\alpha_{1,n,y}$ of the operator $\mathfrak{C}_2$ satisfy the identity~\eqref{eq:alpha_k2}, so technically for each point of the boundary $y$, we can choose two of them equal to zero. Therefore, in order to achieve a solution $u\in C^0(\mathbb{R};H^{4}) \cap  C^1(\mathbb{R};H^{2}) $ to problem~\eqref{eq:ev_Dir}, we consider e.g.
\begin{equation*} 
\mathfrak{C}_2 = -1/2\partial_xV(0)\varphi_{0,0} \mathcal{I}_{2,0} -1/2\partial_xV(1)\varphi_{0,1} \mathcal{I}_{2,1}.
\end{equation*}
For $k=3$, i.e. to get $H^6$-regularity, we have to solve the systems
\begin{align*}
\alpha_{1,3,y} +  4\alpha_{1,2,y} + 6\alpha_{1,1,y} &= -2\partial_xV, \\
\alpha_{1,3,y} +  6\alpha_{1,2,y} + 15\alpha_{1,1,y} &= -6\partial_xV,
\end{align*}
and 
\begin{align*}
\alpha_{2,5,y} +  6\alpha_{2,4,y} + 15\alpha_{2,3,y} +  20\alpha_{2,2,y} + 15\alpha_{2,1,y}&= -4\partial^3_xV,
\end{align*}
for $i=1$, $i=2$ respectively. So, if we set e.g. $\alpha_{1,1,y} = \alpha_{2,1,y}  = 0$ and $\alpha_{2,n,y} = 0,  n = 3, 4, 5$ for $y=0, 1$, we get
\begin{equation*}
\mathfrak{C}_3 = \sum_{y\in \{0,1\}} (-2\partial_xV \varphi_{1,y} \mathcal{I}_{2,y} +6\partial_xV\varphi_{0,y} \mathcal{I}_{3,y}- 1/5\partial^3_xV \varphi_{3,y} \mathcal{I}_{2,y}).
\end{equation*}
Finally, for $k=4$, i.e. to achieve a solution of $H^8$-regularity, there is no degree of freedom in the case $i=1$, for $y=0, 1$ we have $\alpha_{1,1,y} = -1/2\partial_xV, \alpha_{1,2,y} = 1/4 \partial_xV$ and $\alpha_{1,3,y} = 0$. Additionally, for $i=2, 3$ we get the following systems to solve,
\begin{align*}
\alpha_{2,5,y} +  6\alpha_{2,4,y} + 15\alpha_{2,3,y} +  20\alpha_{2,2,y} + 15\alpha_{2,1,y}&= -4\partial^3_xV, \\
\alpha_{2,5,y} +  8\alpha_{2,4,y} + 28\alpha_{2,3,y} +  56\alpha_{2,2,y} + 70\alpha_{2,1,y}&= -24\partial^3_xV,
\end{align*}
and
\begin{align*}
\alpha_{3,7,y} +  8\alpha_{3,2,y} + 28\alpha_{3,5,y} +  56\alpha_{3,4,y} + 70\alpha_{3,3,y} +  56\alpha_{3,2,y} + 28\alpha_{3,1,y}&= -6\partial^5_xV+12\partial_xV\partial^2_xV.
\end{align*}
For the numerical experiments, we consider the following variant of the corrector $\mathfrak{C}_4$,
\begin{equation*} 
\begin{split} 
\mathfrak{C}_4 = \sum_{y\in \{0,1\}} (&-1/2\partial_xV \varphi_{2,y} \mathcal{I}_{1,y} +1/4\partial_xV\varphi_{1,y} \mathcal{I}_{2,y} -31/35\partial^3_xV\varphi_{3,y} \mathcal{I}_{2,y}\\
& +32/35\partial^3_xV\varphi_{2,y}\mathcal{I}_{3,y} + 1/70(-6\partial^5_xV+12\partial_xV\partial^2_xV) \varphi_{4,y} \mathcal{I}_{3,y}).
\end{split}
\end{equation*}
Note that all the derivatives $\partial^j_xV$ and function $\varphi_{n,y}$ are evaluated in $y = 0,1$.
\end{example}

Now, we establish a basic useful property on the correctors.
\begin{lemma} \label{lem:pas_si_mal} Let $ k \geq 2$ for all $j\geq 0$, we have that $\exp(\mathfrak{C}_k)$ is bounded on $H^{2j}$ and satisfies
$$
\exp(\mathfrak{C}_k) H^{2j}_{\mathrm{Dir}} =  H^{2j}_{V} \quad \mathrm{if} \quad j\leq k+1.
$$
\end{lemma}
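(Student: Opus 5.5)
The plan is to prove boundedness first, then the two inclusions $\exp(\mathfrak{C}_k) H^{2j}_{\mathrm{Dir}} \subset H^{2j}_V$ and $\exp(-\mathfrak{C}_k) H^{2j}_V \subset H^{2j}_{\mathrm{Dir}}$, using that $\exp(\mathfrak{C}_k)$ is invertible with inverse $\exp(-\mathfrak{C}_k)$.

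\emph{Boundedness.} Each building block $\varphi_{2i+1-n,y}\mathcal{I}_{n,y}$ is bounded on $H^{m}$ for every $m\geq 0$. Indeed, $\mathcal{I}_{n,y}$ maps $H^m$ to $H^{m+n}\subset H^m$ with norm bounded by a constant (by the Taylor-type formula of the Remark), and multiplication by a smooth function is bounded on $H^m$. Hence $\mathfrak{C}_k$ is a bounded operator on $H^{2j}$. The exponential series then converges in operator norm, so $\exp(\mathfrak{C}_k)$ and $\exp(-\mathfrak{C}_k)$ are bounded, mutually inverse operators on $H^{2j}$.

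\emph{The inclusion $\exp(\mathfrak{C}_k)H^{2j}_{\mathrm{Dir}}\subset H^{2j}_V$ for $j\le k+1$.} This follows from Proposition~\ref{prop:constr_correct} as proved above. The compatibility conditions of Proposition~\ref{prop:comp_prec} at level $\ell$ involve only the boundary values of derivatives of order at most $2\ell$. The proof of Proposition~\ref{prop:constr_correct} actually establishes the following, for each $\ell\le k$: for $u\in H^{2\ell+2}_{\mathrm{Dir}}$, the level-$\ell$ condition holds for $e^{\mathfrak{C}_k}u$. That proof reduces to $u\in H^\infty_{\mathrm{Dir}}$, and the reduction is legitimate because $H^\infty_{\mathrm{Dir}}$ is dense in $H^{2\ell+2}_{\mathrm{Dir}}$ and the boundary traces are continuous. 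For $j\le k+1$ and $u\in H^{2j}_{\mathrm{Dir}}$, the conditions required are those with $\ell<j$, i.e.\ $\ell\le j-1\le k$, and $u\in H^{2j}_{\mathrm{Dir}}\subset H^{2\ell+2}_{\mathrm{Dir}}$. Together with boundedness on $H^{2j}$, this gives $e^{\mathfrak{C}_k}u\in H^{2j}_V$.

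\emph{The reverse inclusion $\exp(-\mathfrak{C}_k)H^{2j}_V\subset H^{2j}_{\mathrm{Dir}}$.} This is the main obstacle; I would handle it by a dimension count. Write $B_j\colon H^{2j}\to\mathbb{C}^{2j}$ for the map collecting $\partial_x^{2\ell}u(y)$ for $\ell<j$ and $y\in\{0,1\}$, and $B^V_j$ for the map collecting the conditions of Proposition~\ref{prop:comp_prec} (equivalently $(\partial_x^2+V)^\ell u(y)$). Both maps are surjective: the functions $\varphi_{n,y}$ let us prescribe arbitrary boundary jets, and $B^V_j$ is a triangular perturbation of $B_j$ with identity leading part by Lemma~\ref{lem:appendix}. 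Hence $H^{2j}_{\mathrm{Dir}}$ and $H^{2j}_V$ are closed subspaces of the same finite codimension $2j$. Since $e^{\mathfrak{C}_k}$ is an isomorphism of $H^{2j}$ mapping $H^{2j}_{\mathrm{Dir}}$ into $H^{2j}_V$, it suffices to show the image has codimension $2j$. This holds because an isomorphism preserves codimension: $H^{2j}/H^{2j}_{\mathrm{Dir}}\cong H^{2j}/e^{\mathfrak{C}_k}H^{2j}_{\mathrm{Dir}}$. A subspace of codimension $2j$ contained in another closed subspace of codimension $2j$ equals it, which gives equality.

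If the codimension argument raised doubts, the fallback is a direct check that $e^{-\mathfrak{C}_k}$ satisfies the triangular boundary identities in reverse. That check uses the same product formula for $\partial_x^p$ of iterated $\varphi\mathcal{I}$ blocks at $y$.
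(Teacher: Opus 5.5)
Your proposal is correct and follows essentially the same route as the paper: boundedness of $\mathfrak{C}_k$, then the inclusion $e^{\mathfrak{C}_k}H^{2j}_{\mathrm{Dir}}\subset H^{2j}_V$ obtained from Proposition~\ref{prop:constr_correct} by density, then equality because both spaces are closed of codimension $2j$ and $e^{\mathfrak{C}_k}$ is an isomorphism. The differences are cosmetic. The paper gets the inclusion from the statement for $H^{2k+2}_{\mathrm{Dir}}$ plus density, rather than from the level-wise conditions inside that proof, and it phrases the dimension count via orthogonal complements and duality (Lemma~\ref{lem:func_ana_app}) where you use quotients. You also justify the codimension $2j$, which the paper only asserts.
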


In order to prove Lemma~\ref{lem:pas_si_mal}, we need the following result.
\begin{lemma}
\label{lem:func_ana_app} Let $E$ be a Hilbert space, $A : E\to E$ be an isomorphism and $F,G \subset E$ be two close subsets of $E$.
If $AF \subset G$ and  $\mathrm{dim}\, F^\perp \leq \mathrm{dim}\, G^\perp <\infty$, then $AF=G$.
\end{lemma}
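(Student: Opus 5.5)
The plan is to compare $AF$ and $G$ through their orthogonal complements, using that $AF$ is a closed subspace of $G$ whose codimension in $E$ is no larger than that of $G$. I read "close subsets" as \emph{closed subspaces}, as in the intended application, where $F=H^{2j}_{\mathrm{Dir}}$ and $G=H^{2j}_V$ are finite-codimensional closed subspaces of $E=H^{2j}$.

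First, since $A$ is a bounded isomorphism with bounded inverse (by the open mapping theorem, if it is only assumed bounded and bijective), $AF$ is a closed subspace of $E$. Moreover, $A$ induces a linear isomorphism of quotient spaces $E/F \to E/AF$, $[x]\mapsto [Ax]$. This map is well defined because $A$ maps $F$ onto $AF$, and it is bijective because $A$ is. For a closed subspace $H$ of a Hilbert space we have $E/H\cong H^\perp$, so
$$
\dim (AF)^\perp = \dim E/AF = \dim E/F = \dim F^\perp \leq \dim G^\perp < \infty .
$$

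Next, from $AF\subset G$ we get $G^\perp \subset (AF)^\perp$. Both spaces are finite dimensional, and by the previous step $\dim (AF)^\perp \le \dim G^\perp$. Hence $G^\perp=(AF)^\perp$. Taking orthogonal complements once more and using that both $AF$ and $G$ are closed, we obtain $AF = ((AF)^\perp)^\perp = (G^\perp)^\perp = G$.

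There is no real obstacle here. The only points needing care are, first, that $AF$ is closed, which is needed for the biorthogonal identity and follows from the boundedness of $A^{-1}$. Second, the codimension of $F$ must be preserved under $A$, which the quotient argument handles without any assumption that $A$ is unitary. In the application one then checks separately that $\exp(\mathfrak{C}_k)$ is an isomorphism of $H^{2j}$, with inverse $\exp(-\mathfrak{C}_k)$, and that $\dim (H^{2j}_{\mathrm{Dir}})^\perp = \dim (H^{2j}_V)^\perp = 2j$. The second fact holds because both spaces are kernels of $2j$ linearly independent continuous functionals on $H^{2j}$, namely traces of the operators $\partial_x^{2\ell}$, respectively $(\partial_x^2+V)^\ell$, for $\ell<j$, at the two boundary points. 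The reverse inclusion needed there is exactly the one provided by Proposition~\ref{prop:constr_correct}.
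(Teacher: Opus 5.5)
Your proof is correct and is essentially the paper's argument in dual form. The paper applies the adjoint, turning $AF\subset G$ into $A^*G^\perp\subset F^\perp$ and using $\dim A^*G^\perp=\dim G^\perp\geq \dim F^\perp$; you instead work with $(AF)^\perp=(A^*)^{-1}F^\perp$ and obtain its dimension from the quotient isomorphism $E/F\cong E/AF$, and in doing so you spell out the closedness of $AF$ and the final biorthogonality step that the paper's ``and so $AF=G$'' leaves implicit.
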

\begin{proof} First, we note that by duality, the inclusion $AF \subset G$ is equivalent to $A^* G^\perp \subset F^\perp$. Then using that $A^*$ is bijective, we have that
$$
\mathrm{dim}\, F^\perp  \leq \mathrm{dim}\, G^\perp  =  \mathrm{dim}\, A^* G^\perp.
$$
Thus, we have $A^* G^\perp = F^\perp$, and so $AF=G$.
\end{proof}

\begin{ProofOf}{Lemma~\ref{lem:pas_si_mal}}
Since $\mathfrak{C}_k$ is bounded on $H^{2j}$, $\exp(\mathfrak{C}_k)$ is also bounded on $H^{2j}$.
By construction, we also know that $\exp(\mathfrak{C}_k)  H^{2k+2}_{\mathrm{Dir}} \subset H^{2k+2}_{V}$. Thus, if $0\leq j \leq k+1$,  by continuity of $\mathfrak{C}_k$ in $H^{2j} $ and by density, we deduce that $\exp(\mathfrak{C}_k) H^{2j}_{\mathrm{Dir}} \subset H^{2j}_{V}$. Finally, since 
$$
\mathrm{codim} \, H^{2j}_{V} = \mathrm{codim} \, H^{2j}_{\mathrm{Dir}} = 2j
$$
we conclude by a dimension argument that these inclusions are equalities, see Lemma \ref{lem:func_ana_app}.
\end{ProofOf}
Due to Lemma~\ref{lem:pas_si_mal}, the application of the operators $e^{\pm\mathfrak{C}_k}$ allows to switch between the spaces $H^{2k+2}_{\mathrm{Dir}}$ and $H^{2k+2}_{V}$.


\subsection{Modified potential} In this subsection, we define the corrected potential in terms of the corrector functions given in~\eqref{def:ck}.

\begin{definition} Given $k\geq 1$, we introduce the modified potential, defined by
$$
V^{\mathrm{cor}}_k = e^{- \mathfrak{C}_k} (\partial_x^2 + V) e^{ \mathfrak{C}_k  }- \partial_x^2.
$$
\end{definition}

\begin{theorem} \label{thm:main} Let $k\geq 1$. Then, for all $j\in \{1,\ldots,k\}$, the potential $V^{\mathrm{cor}}_k$ is bounded on $H^{2j}$, and it satisfies $V^{\mathrm{cor}}_k   H^{2j}_{\mathrm{Dir}} \subset H^{2j}_{\mathrm{Dir}}$.
\end{theorem}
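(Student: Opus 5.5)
Boundedness and the inclusion are handled separately; the inclusion is reduced to the intertwining property of Lemma~\ref{lem:pas_si_mal} combined with the fact that $\partial_x^2+V$ lowers regularity by two while preserving the compatibility structure.

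\emph{Boundedness.} Expanding,
$$
V^{\mathrm{cor}}_k = e^{-\mathfrak{C}_k}\partial_x^2 e^{\mathfrak{C}_k} - \partial_x^2 + e^{-\mathfrak{C}_k} V e^{\mathfrak{C}_k}.
$$
The last term is bounded on every $H^{2j}$, since $V$ is smooth and $e^{\pm\mathfrak{C}_k}$ are bounded on $H^{2j}$ by Lemma~\ref{lem:pas_si_mal}. For the first two terms I write
$$
e^{-\mathfrak{C}_k}\partial_x^2 e^{\mathfrak{C}_k}-\partial_x^2 = e^{-\mathfrak{C}_k}[\partial_x^2, e^{\mathfrak{C}_k}].
$$
It then suffices to show that $[\partial_x^2,\mathfrak{C}_k]$, and hence $[\partial_x^2, e^{\mathfrak{C}_k}]$ via the series $\sum_m \frac{1}{m!}\sum_{r} \mathfrak{C}_k^{r}[\partial_x^2,\mathfrak{C}_k]\mathfrak{C}_k^{m-1-r}$, is bounded on $H^{2j}$.

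Each summand of $\mathfrak{C}_k$ has the form $\varphi\,\mathcal{I}_{n,y}$ with $n\ge1$. Using $\partial_x\mathcal{I}_{n,y}=\mathcal{I}_{n-1,y}$, we get
$$
\partial_x^2(\varphi\mathcal{I}_{n,y}u)=\varphi''\mathcal{I}_{n,y}u+2\varphi'\mathcal{I}_{n-1,y}u+\varphi\mathcal{I}_{n-2,y}u.
$$
For $n\ge2$ every term is bounded on $H^{2j}$. For $n=1$ the last term is $\varphi\partial_x u$, which loses one derivative. This is where care is needed, because the bad term is $\varphi\,\mathcal{I}_{n,y}\partial_x^2 u$. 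I would write $\partial_x^2$ of the product minus the product applied to $\partial_x^2u$, and integrate by parts in $\mathcal{I}_{n,y}\partial_x^2u$: this gives $\mathcal{I}_{n-2,y}u$ plus boundary terms such as $-(x-y)^{n-1}\partial_xu(y)/(n-1)!-\cdots$. Point evaluations of $u$ and $\partial_xu$ are bounded on $H^{2j}$ for $j\ge1$, so the commutator $[\partial_x^2,\varphi\mathcal{I}_{n,y}]$ is a sum of bounded operators on $H^{2j}$, $j\ge1$. Summing the exponential series with the operator norms gives boundedness of $V^{\mathrm{cor}}_k$ on $H^{2j}$.

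\emph{Inclusion.} Take $u\in H^{2j+2}_{\mathrm{Dir}}$ with $j\le k$. By Lemma~\ref{lem:pas_si_mal} (since $j+1\le k+1$), $e^{\mathfrak{C}_k}u\in H^{2j+2}_V$. By the definition of $H^{2a}_V$, $\partial_x^2+V$ maps $H^{2j+2}_V$ into $H^{2j}_V$, since $(\partial_x^2+V)^\ell(\partial_x^2+V)w=(\partial_x^2+V)^{\ell+1}w$. Applying Lemma~\ref{lem:pas_si_mal} again, $e^{-\mathfrak{C}_k}H^{2j}_V=H^{2j}_{\mathrm{Dir}}$, so
$$
(\partial_x^2+V^{\mathrm{cor}}_k)u=e^{-\mathfrak{C}_k}(\partial_x^2+V)e^{\mathfrak{C}_k}u\in H^{2j}_{\mathrm{Dir}}.
$$
Since also $\partial_x^2 H^{2j+2}_{\mathrm{Dir}}\subset H^{2j}_{\mathrm{Dir}}$ by \eqref{cond:comp0}, subtracting yields $V^{\mathrm{cor}}_k u\in H^{2j}_{\mathrm{Dir}}$ for $u\in H^{2j+2}_{\mathrm{Dir}}$.

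To extend to all $u\in H^{2j}_{\mathrm{Dir}}$, I would use that $H^{2j+2}_{\mathrm{Dir}}$ is dense in $H^{2j}_{\mathrm{Dir}}$ for the $H^{2j}$ norm (sine series truncations), together with the boundedness of $V^{\mathrm{cor}}_k$ on $H^{2j}$ proved above and the closedness of $H^{2j}_{\mathrm{Dir}}$. This closes the inclusion for $1\le j\le k$.

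The main obstacle is the boundedness step, specifically controlling the commutator with $\partial_x^2$ when $n=1$ or $n=2$, where boundary traces appear. I expect these traces to be finite-rank terms bounded on $H^{2j}$ for $j\ge1$, which is exactly why the statement excludes $j=0$.
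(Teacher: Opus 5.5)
Your proposal is correct and follows essentially the paper's proof. You reduce boundedness to that of $[\partial_x^2,\varphi\mathcal{I}_{n,y}]$ on $H^{2j}$, where in the only delicate case $n=1$ the $\varphi\partial_x u$ terms cancel and leave the bounded trace $\varphi\,\partial_x u(y)$. You then get the inclusion from $(\partial_x^2+V)H^{2j+2}_V\subset H^{2j}_V$, Lemma~\ref{lem:pas_si_mal}, and density.

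The differences from the paper are cosmetic. You expand $[\partial_x^2,e^{\mathfrak{C}_k}]$ where the paper expands $e^{-\mathrm{ad}_{\mathfrak{C}_k}}\partial_x^2$, you integrate by parts uniformly in $n$, and you work with $H^{2j+2}_{\mathrm{Dir}}$ at each level rather than with $H^{2k+2}_{\mathrm{Dir}}$. One small point: Lemma~\ref{lem:pas_si_mal} is stated only for $k\ge 2$, but for $k=1$ one has $\mathfrak{C}_1=0$, so that case is trivial.
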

\begin{proof} Let $j\in \{1,\ldots,k\}$.
In order to show that $V^{\mathrm{cor}}_k$ is bounded on $H^{2j}$, we expand the exponentials,
$$
V^{\mathrm{cor}}_k =  e^{- \mathfrak{C}_k  }  V e^{ \mathfrak{C}_k  } - \partial_x^2 + e^{-\mathrm{ad}_{\mathfrak{C}_k}}  \partial_x^2   = e^{- \mathfrak{C}_k  }  V e^{ \mathfrak{C}_k  } + \sum_{q\geq 1} \frac{(-1)^q}{q!} \mathrm{ad}_{\mathfrak{C}_k}^q \partial_x^2,
$$
where we use the notation for the iterated commutators, $\mathrm{ad}^q_{\mathfrak{C}_k}(\cdot) = [ \mathfrak{C}_k,\mathrm{ad}^{q-1}_{\mathfrak{C}_k}(\cdot)]$ for $q \geq 1$ and $\mathrm{ad}^0_{\mathfrak{C}_k} =\text{Id}$.
Since $\mathfrak{C}_k$ is bounded  on $H^{2j} $,  it suffices to prove that $[\partial_x^2 ,  \mathfrak{C}_k]$ is bounded on $H^{2j}$.
 Expanding $\mathfrak{C}_k$, it suffices to prove that for all $y\in \{0,1\}$, all $n\geq 1$ and all $\psi \in C^\infty$  the operator $[\partial_x^2 ,  \psi \mathcal{I}_{n,y}]$ is bounded on  $H^{2j}$.
Then, we notice that since $\mathcal{I}_{n,y} $ is an iterated integral,
$$
\forall n\geq 1, \forall a\geq 0,\forall y\in \{0,1\}, \quad \mathcal{I}_{n,y} \; \mathrm{is \ bounded\ from\ } H^a \ \mathrm{to} \ H^{a+n}.
$$
It follows, by composition, since $2j\geq 2$, that almost all these operators are bounded. The only remaining one is $[\partial_x^2 ,  \psi \mathcal{I}_{1,y}]$.
So, we compute it to observe a cancellation. Indeed, given $u\in H^{2j}$ and $y\in \{0,1\}$, we have
\begin{equation*}
\begin{split}
[\partial_x^2 ,  \psi \mathcal{I}_{1,y}] u &= \partial_x^2 \big( \psi \int_y^\cdot u(z) \, \mathrm{d}z \big) -  \psi \int_y^\cdot \partial_x^2 u(z) \, \mathrm{d}z \\
&= (\partial_x^2\psi) \mathcal{I}_{1,y} u + 2 (\partial_x \psi)u + \psi \partial_x u - \psi\partial_x u + \psi \partial_x u(y).
\end{split}
\end{equation*}
Since the terms of order $1$ (i.e. $\partial_x u$) cancel out and $2j\geq 2$ (which ensures that $u\mapsto \partial_x u(y)$ is bounded), we deduce that $[\partial_x^2 ,  \psi \mathcal{I}_{1,y}]$ is bounded on $H^{2j}$.

For the second property, by density, it suffices to prove that
$$
V^{\mathrm{cor}}_k   H^{2k+2}_{\mathrm{Dir}} \subset H^{2k}_{\mathrm{Dir}}.
$$
In other words, 
$$
\forall u\in H^{2k+2}_{\mathrm{Dir}} ,\forall \ell <k, \forall y\in \{0,1\}, \quad  \partial_{x}^{2\ell}  e^{- \mathfrak{C}_k } (\partial_x^2 + V) e^{\mathfrak{C}_k } u(y)=0
$$
or equivalently (note that $\partial_x^{2\ell} \partial_x^2 u(y)=0$),
$$
  e^{- \mathfrak{C}_k } (\partial_x^2 + V)  e^{ \mathfrak{C}_k } H^{2k+2}_{\mathrm{Dir}} \subset H^{2k}_{\mathrm{Dir}}  .
$$
Note that $(\partial_x^2 + V) H^{2k+2}_{V} \subset H^{2k}_{V} $, then, by means of Lemma \ref{lem:pas_si_mal}, we conclude the proof.
\end{proof}

Then, applying the Duhamel formula, we directly deduce the following corollary.
\begin{corollary}
\label{cor:cauchy}
For all $v^{(0)} \in H^{2k}_{\mathrm{Dir}}([0,1];\mathbb{C})$ there exists a unique solution 
$$
v\in C^0(\mathbb{R};H^{2k}_{\mathrm{Dir}}) \cap  C^1(\mathbb{R};H^{2k-2}_{\mathrm{Dir}}) 
$$
to the Cauchy problem
\begin{equation} \label{eq:cauchy_v}
 \ic \partial_t v = (\partial_x^2+V^{\mathrm{cor}}_k) v \quad \text{in}\; \R \times(0,1), \qquad v(0) = v^{(0)} \quad \text{in}\; \R.
\end{equation}
As usual, we denote it by $v(t) = e^{-\ic t (\partial_x^2+V^{\mathrm{cor}}_k)} v^{(0)}$.
\end{corollary}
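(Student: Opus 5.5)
The plan is to treat $V^{\mathrm{cor}}_k$ as a bounded perturbation of the Dirichlet Laplacian restricted to $H^{2k}_{\mathrm{Dir}}$, and to solve \eqref{eq:cauchy_v} by the Duhamel formula combined with a fixed point argument.

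\emph{Step 1: the free flow on the Dirichlet scale.} Let $A=\partial_x^2$ with domain $H^2_{\mathrm{Dir}}$, i.e. $H^2\cap H^1_0$. Expanding in the sine basis $\sin(m\pi x)$, $m\geq 1$, shows that $A$ is self-adjoint on $L^2$. The spaces $H^{2j}_{\mathrm{Dir}}$ are exactly the domains $D(A^j)$, and on them $\|u\|_{H^{2j}}$ is equivalent to $\|A^j u\|_{L^2}+\|u\|_{L^2}$. Indeed, both are characterized by $\sum m^{4j}|\hat u_m|^2<\infty$: the boundary conditions $\partial_x^{2\ell}u(0)=\partial_x^{2\ell}u(1)=0$ for $\ell<j$ are precisely what is needed for the odd periodic extension to lie in $H^{2j}$. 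Hence $e^{-\ic tA}$ is a unitary group on each $H^{2j}_{\mathrm{Dir}}$ equipped with the graph norm. It is strongly continuous in $C^0(\mathbb{R};H^{2k}_{\mathrm{Dir}})$, and for initial data in $H^{2k}_{\mathrm{Dir}}$ it is $C^1$ into $H^{2k-2}_{\mathrm{Dir}}$.

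\emph{Step 2: Duhamel fixed point.} By Theorem~\ref{thm:main}, $B:=V^{\mathrm{cor}}_k$ is bounded on $H^{2k}$ and maps $H^{2k}_{\mathrm{Dir}}$ into itself. Since $H^{2k}_{\mathrm{Dir}}$ is closed in $H^{2k}$, $B$ is a bounded operator on the Banach space $X=H^{2k}_{\mathrm{Dir}}$. I would then look for $v$ solving
$$v(t)=e^{-\ic tA}v^{(0)}-\ic\int_0^t e^{-\ic(t-s)A}Bv(s)\,\mathrm{d}s$$
in $C^0([-T,T];X)$. Because $B$ is bounded, the map on the right is a contraction for small $T$, with a step size depending only on $\|B\|_{X\to X}$. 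Iterating this step gives a global solution. Alternatively one can sum the Dyson series directly, whose $n$-th term is bounded by $(\|B\||t|)^n/n!$.

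\emph{Step 3: regularity and uniqueness.} This is the main technical point: showing that the mild solution is in fact $C^1(\mathbb{R};H^{2k-2}_{\mathrm{Dir}})$ and satisfies the equation. The plan uses the fact that $Bv\in C^0(\mathbb{R};X)$. Standard semigroup theory (for an inhomogeneous term continuous in the domain $D(A^k)$) then shows that the Duhamel integral is $C^1$ with values in $D(A^{k-1})=H^{2k-2}_{\mathrm{Dir}}$, with derivative $-\ic A(\cdot)-\ic Bv$. Here I need $B$ to be bounded on $H^{2k-2}_{\mathrm{Dir}}$ as well, which also follows from Theorem~\ref{thm:main} with $j=k-1$ when $k\geq2$. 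The case $k=1$ is handled separately, since $H^0_{\mathrm{Dir}}=L^2$, on which $B$ should likewise be bounded.

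For uniqueness, any solution in the stated class satisfies the Duhamel formula. To see this, differentiate $s\mapsto e^{\ic sA}v(s)$ in $H^{2k-2}$ and check that its derivative equals $-\ic e^{\ic sA}Bv(s)$. The difference $w$ of two solutions then satisfies $\|w(t)\|\leq \|B\|\,\big|\int_0^t\|w(s)\|\,\mathrm{d}s\big|$, and Grönwall's lemma gives $w\equiv0$.
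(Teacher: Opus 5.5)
Your proposal is correct and is essentially the paper's argument. The paper only says that the corollary follows from the Duhamel formula once Theorem~\ref{thm:main} makes $V^{\mathrm{cor}}_k$ a bounded operator on $H^{2k}_{\mathrm{Dir}}$, and your Steps~1--3 supply exactly the details behind that line: the unitary Dirichlet group on $D(A^k)=H^{2k}_{\mathrm{Dir}}$, the contraction or Dyson series, classical regularity of the Duhamel term, and Gr\"onwall for uniqueness. As a minor remark, you do not need boundedness of $V^{\mathrm{cor}}_k$ on $H^{2k-2}_{\mathrm{Dir}}$, because $V^{\mathrm{cor}}_k v\in C^0(\mathbb{R};H^{2k}_{\mathrm{Dir}})$ already and your Gr\"onwall argument runs in the $H^{2k}$ norm; in any case, for $k=1$ one has $\mathfrak{C}_1=0$, so $V^{\mathrm{cor}}_1=V$.
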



\subsection{Dynamics} Finally, we show the equivalence between smooth solutions~\eqref{cond:reg} to problem~\eqref{eq:ev_Dir} and the existence of solutions to the above Cauchy problem~\eqref{eq:cauchy_v}.
\begin{proposition} \label{eq:prop_evident} Let $k\geq 1$.
A function
$$
u\in C^0(\mathbb{R};H^{2k}) \cap  C^1(\mathbb{R};H^{2k-2}) 
$$
is solution to \eqref{eq:ev_Dir} if and only if there exists $v^{(0)} \in H^{2k}_{\mathrm{Dir}}$ such that
\begin{equation}
\label{eq:leopart}
\forall t \in \mathbb{R}, \quad u(t) = e^{\mathfrak{C}_k} e^{-\ic t (\partial_x^2+V^{\mathrm{cor}}_k)} v^{(0)}.
\end{equation}
\end{proposition}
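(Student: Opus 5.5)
The plan is to use the change of variable $v=e^{-\mathfrak{C}_k}u$ together with Lemma~\ref{lem:pas_si_mal}, Proposition~\ref{prop:compat_cond} and Corollary~\ref{cor:cauchy}. First I dispose of $k=1$. In that case the sum in \eqref{def:ck} is empty, so $\mathfrak{C}_1=0$ and $V^{\mathrm{cor}}_1=V$. The statement then says that the $C^0(H^2)\cap C^1(H^0)$ solutions of \eqref{eq:ev_Dir} are exactly the flows of initial data in $H^2_{\mathrm{Dir}}$, which is Corollary~\ref{cor:cauchy} combined with Proposition~\ref{prop:compat_cond}. So from now on $k\geq 2$, and Lemma~\ref{lem:pas_si_mal} applies. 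Since $j=k$ and $j=k-1$ are both $\leq k+1$, it gives that $e^{\pm\mathfrak{C}_k}$ are bounded on $H^{2k}$ and $H^{2k-2}$. It also gives
\[
e^{\mathfrak{C}_k}H^{2k}_{\mathrm{Dir}}=H^{2k}_V, \qquad e^{\mathfrak{C}_k}H^{2k-2}_{\mathrm{Dir}}=H^{2k-2}_V .
\]
Hence $e^{-\mathfrak{C}_k}$ maps $H^{2j}_V$ onto $H^{2j}_{\mathrm{Dir}}$ for these $j$. The other ingredient is the conjugation identity
\[
e^{\mathfrak{C}_k}(\partial_x^2+V^{\mathrm{cor}}_k)=(\partial_x^2+V)e^{\mathfrak{C}_k}\quad\text{on } H^{2},
\]
which is immediate from the definition of $V^{\mathrm{cor}}_k$ because $e^{\mathfrak{C}_k}e^{-\mathfrak{C}_k}=\mathrm{Id}$.

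($\Leftarrow$) Let $v^{(0)}\in H^{2k}_{\mathrm{Dir}}$ and let $v$ be the solution given by Corollary~\ref{cor:cauchy}, so $v\in C^0(\mathbb{R};H^{2k}_{\mathrm{Dir}})\cap C^1(\mathbb{R};H^{2k-2}_{\mathrm{Dir}})$. Set $u=e^{\mathfrak{C}_k}v$. Because $e^{\mathfrak{C}_k}$ is a bounded, time-independent operator on $H^{2k}$ and on $H^{2k-2}$, it preserves continuity in time and commutes with $\partial_t$. Therefore $u\in C^0(H^{2k})\cap C^1(H^{2k-2})$, and by the conjugation identity
\[
\ic\partial_t u=e^{\mathfrak{C}_k}(\partial_x^2+V^{\mathrm{cor}}_k)v=(\partial_x^2+V)u .
\]
For the boundary conditions, $u(t)\in H^{2k}_V\subset H^2_V=H^2_{\mathrm{Dir}}$, so $u(t,0)=u(t,1)=0$.

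($\Rightarrow$) Let $u$ be a solution in $C^0(H^{2k})\cap C^1(H^{2k-2})$. Proposition~\ref{prop:compat_cond} with $j=0,1$ gives $u\in C^0(\mathbb{R};H^{2k}_V)\cap C^1(\mathbb{R};H^{2k-2}_V)$. Define $v=e^{-\mathfrak{C}_k}u$. By the mapping properties above, $v\in C^0(H^{2k}_{\mathrm{Dir}})\cap C^1(H^{2k-2}_{\mathrm{Dir}})$. Differentiating in time and again using boundedness of $e^{-\mathfrak{C}_k}$,
\[
\ic\partial_t v=e^{-\mathfrak{C}_k}(\partial_x^2+V)e^{\mathfrak{C}_k}v=(\partial_x^2+V^{\mathrm{cor}}_k)v .
\]
The uniqueness part of Corollary~\ref{cor:cauchy} then forces $v(t)=e^{-\ic t(\partial_x^2+V^{\mathrm{cor}}_k)}v^{(0)}$ with $v^{(0)}=e^{-\mathfrak{C}_k}u(0)\in H^{2k}_{\mathrm{Dir}}$, which is \eqref{eq:leopart}.

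The main point needing care is bookkeeping of function spaces. The inverse of $e^{\mathfrak{C}_k}$ must send $H^{2k}_V$ into $H^{2k}_{\mathrm{Dir}}$, and not merely into $H^{2k}$; this is where the equality, rather than just the inclusion, in Lemma~\ref{lem:pas_si_mal} is needed. We also need $v$ to land in the exact class in which Corollary~\ref{cor:cauchy} asserts uniqueness, which is why both levels $j=k$ and $j=k-1$ of Lemma~\ref{lem:pas_si_mal} are used. Everything else is a formal manipulation with bounded operators.
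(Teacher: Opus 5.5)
Your proposal is correct and follows essentially the paper's proof. You conjugate by $e^{\mathfrak{C}_k}$, use Proposition~\ref{prop:compat_cond} with Lemma~\ref{lem:pas_si_mal} to place $v=e^{-\mathfrak{C}_k}u$ in $H^{2k}_{\mathrm{Dir}}$, and conclude by the uniqueness in Corollary~\ref{cor:cauchy}. Your separate treatment of $k=1$ (where $\mathfrak{C}_1=0$) and your use of the level $j=k-1$ to get $v\in C^1(\mathbb{R};H^{2k-2}_{\mathrm{Dir}})$ are only extra care. The paper leaves the latter implicit; it also follows from $H^{2k-2}_{\mathrm{Dir}}$ being closed in $H^{2k-2}$ and containing $H^{2k}_{\mathrm{Dir}}$.
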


\begin{proof} Firstly, let $v^{(0)} \in H^{2k}_{\mathrm{Dir}}$ and $ u\in C^0(\mathbb{R};H^{2k}) \cap  C^1(\mathbb{R};H^{2k-2})$ satisfying \eqref{eq:leopart}. We shall show that $u$ is a solution to~\eqref{eq:ev_Dir}. Due to Theorem \ref{thm:main}, $ \mathfrak{C}_k  $ is bounded on $H^{2k-2}$  and $t\mapsto  e^{-\ic t (\partial_x^2+V^{\mathrm{cor}}_k)} v^{(0)}$ is $C^1$ with values in $H^{2k-2}$, thus we have
$$
\ic \partial_t u =  e^{ \mathfrak{C}_k  } (\partial_x^2+V^{\mathrm{cor}}_k) e^{-\ic t (\partial_x^2+V^{\mathrm{cor}}_k)} v^{(0)}.
$$
As a consequence, there holds
$$
\ic \partial_t   u = e^{ \mathfrak{C}_k   } (\partial_x^2+V^{\mathrm{cor}}_k) e^{-\ic t (\partial_x^2+V^{\mathrm{cor}}_k)} v^{(0)} =e^{ \mathfrak{C}_k   } (\partial_x^2+V^{\mathrm{cor}}_k) e^{ -\mathfrak{C}_k   }  u(t) = (\partial_x^2+V) u(t),
$$
where the last identity is just the definition of $V^{\mathrm{cor}}_k$. Finally, since $ e^{-\ic t (\partial_x^2+V^{\mathrm{cor}}_k)} v^{(0)}$ vanishes at the boundary (because it belongs to $H^{2k}_{\mathrm{Dir}}$) and $\exp( \mathfrak{C}_k) H^{2k}_{\mathrm{Dir}} \subset H^{2k}_{V} \subset  H^{2}_{\mathrm{Dir}}$ by Proposition~\ref{prop:constr_correct}, we deduce that $u$ also vanishes at the boundary. 

Now, we assume that $u$ is solution to \eqref{eq:ev_Dir}. We set $v = e^{- \mathfrak{C}_k }u$.
By composition, we directly have that $v\in C^0(\mathbb{R};H^{2k}) \cap  C^1(\mathbb{R};H^{2k-2}) $ and
$$
\ic \partial_t   v =   e^{- \mathfrak{C}_k } (\partial_x^2 + V ) u =   e^{- \mathfrak{C}_k } (\partial_x^2 + V ) e^{ \mathfrak{C}_k } v = (\partial_x^2 + V^{\mathrm{cor}}_k)v.
$$
Then, we set $v^{(0)}= v(0)$. By uniqueness of the solution to the Cauchy problem~\eqref{eq:cauchy_v}, to conclude the proof, it suffices  to prove that $v$ takes values in $ H^{2k}_{\mathrm{Dir}}$. To get this last property, by Proposition \ref{prop:compat_cond}, $u$ takes values in $H^{2k}_V$ and finally, $  e^{- \mathfrak{C}_k }$ maps  $H^{2k}_V$ in $H^{2k}_{\mathrm{Dir}}$ by Lemma \ref{lem:pas_si_mal}. 
\end{proof}


\subsection{Time discretization and splitting methods} \label{sub:time_disc} As a direct corollary of the results of this section, we deduce the convergence of a high order method for the time discretization. The method relies on the formula 
$$
u(t) = e^{\mathfrak{C}_k} e^{-\ic t (\partial_x^2+V^{\mathrm{cor}}_k)} e^{-\mathfrak{C}_k} u^{(0)}
$$
of Proposition \ref{eq:prop_evident}. First, we discretize  \eqref{eq:ev_Dir} in time by applying a splitting method. For a time step $\tau>0$, we approximate the solution $u$ at discrete time $t_n = n\tau$ by
\begin{equation}
\label{eq:def_time_disc}
u_n = e^{\mathfrak{C}_k} \big(e^{-\ic b_s \tau \partial^2_x} r(-\ic a_s \tau V^{\mathrm{cor}}_k) \cdots  e^{-\ic b_1 \tau \partial^2_x} r(-\ic a_1 \tau V^{\mathrm{cor}}_k)   \big)^n e^{-\mathfrak{C}_k} u^{(0)},
\end{equation}
where $a_1, b_1, \cdots,  a_s, b_s \in \mathbb{R}$ are the coefficients of a splitting method of formal order $p\geq 1$, $s \geq 1$ is the number of stages and $r$ is a real analytic function, defined on a neighbourhood of the origin, such that
$$
r(z) = e^z + \mathcal{O}(z^{p+1}), \qquad \text{as}\; z \to 0.
$$
Since we are not able to compute $ \exp(-\ic t V^{\mathrm{cor}}_k)$ exactly, we approximate $r$ by a Runge-Kutta method.
\begin{remark} The method in~\eqref{eq:def_time_disc} can be interpreted as a processed scheme. Thereby, for a constant time step $\tau$, a preferably computationally cheap integrator $K_{\tau}$ is enhanced with a change of variable $\pi_{\tau}$, the so-called \emph{post-processor},
$$
u_n = \pi_{\tau} \circ K^n_{\tau} \circ \pi^{-1}_{\tau}(u^{(0)}),
$$ 
in order to get the numerical solution $u_n$ of the original problem. This approach is favorable if the computation costs of the \emph{kernel} $K$ are cheap, since it is implemented $n$ times, while the processors $\pi_{\tau}, \pi^{-1}_{\tau}$ are computed only once. This idea was first introduced as concept of effective order~\cite{Butcher:1969:TEO}, to achieve an explicit Runge-Kutta method of order five with only five internal steps. There was a revived interest for this approach of effective order with the development of geometric numerical integration methods, where integration with a constant time step is natural~\cite{Blanes:1999:SIW, Hairer:2010:GNI, Leimkuhler:2004:SHD}. In general, processing is applied to increase the accuracy of a scheme while keeping the same (or a fewer) amount of evaluations per step. Reconsidering~\eqref{eq:def_time_disc}, the splitting scheme $\Phi^{\text{Y0cor}}_{\tau}$ corresponds to the kernel, and $e^{\mathfrak{C}_k}$ denotes the post-processor $\pi_{\tau}$ although it does not depend on $\tau$ (analogously, $e^{-\mathfrak{C}_k}$ is called the \emph{pre-processor} $\pi^{-1}_{\tau}$).
\end{remark}
The family of splitting methods given in~\eqref{eq:def_time_disc} preserves the regularity in space, i.e. the space $H^{2k}_{\mathrm{Dir}}$. As a corollary of Theorem \ref{thm:main}, applying the classical result for convergence of splitting methods \cite{Jahnke:2000:EBF}, we consider the following error estimate for this time discretization.
\begin{proposition}  \label{prop:disc_time}
If $1\leq a \leq k$ and $u^{(0)}\in H^{2k}_V$, then for all $T>0$, we have
$$
 \| u(t_n) - u_n  \|_{H^{2a}} \lesssim_{T,k} \tau^{\min(k-a,p)} \|u^{(0)} \|_{H^{2k}} \quad \text{for}\; 0 \leq t_n \leq T.
$$
\end{proposition}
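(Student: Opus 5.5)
The plan is to conjugate everything by the corrector and reduce to a standard Lady Windermere's fan argument for the corrected problem on the scale $H^{2j}_{\mathrm{Dir}}$. Set $v^{(0)}=e^{-\mathfrak{C}_k}u^{(0)}$. By Lemma~\ref{lem:pas_si_mal}, $v^{(0)}\in H^{2k}_{\mathrm{Dir}}$ with $\|v^{(0)}\|_{H^{2k}}\lesssim\|u^{(0)}\|_{H^{2k}}$. By Proposition~\ref{eq:prop_evident}, $u(t_n)=e^{\mathfrak{C}_k}v(t_n)$ with $v(t)=e^{-\ic t(\partial_x^2+V^{\mathrm{cor}}_k)}v^{(0)}$. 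Since $e^{\mathfrak{C}_k}$ is bounded on $H^{2a}$, it suffices to prove
\[
\|v(t_n)-\Psi_\tau^n v^{(0)}\|_{H^{2a}}\lesssim \tau^{\min(k-a,p)}\|v^{(0)}\|_{H^{2k}},
\]
where $\Psi_\tau=e^{-\ic b_s\tau\partial_x^2}r(-\ic a_s\tau V^{\mathrm{cor}}_k)\cdots e^{-\ic b_1\tau\partial_x^2}r(-\ic a_1\tau V^{\mathrm{cor}}_k)$.

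The setting is the scale of spaces $X_j=H^{2j}_{\mathrm{Dir}}$, $0\le j\le k$, equipped with the norm $\|(\partial_x^2)^j\cdot\|_{L^2}$. This norm is equivalent to $\|\cdot\|_{H^{2j}}$ because $X_j$ is the domain of the $j$-th power of the Dirichlet Laplacian. On each $X_j$, $e^{-\ic t\partial_x^2}$ is unitary, since it commutes with $\partial_x^2$. By Theorem~\ref{thm:main}, $B:=V^{\mathrm{cor}}_k$ is a bounded operator on every $X_j$ with $j\le k$. The cases $j=0$ and $X_0=L^2$ also need a bound; I would either check it directly, since the integral operators in $\mathfrak{C}_k$ are smoothing, or restrict the argument to $j\ge1$ and lose nothing because $a\ge1$. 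Consequently $r(-\ic a\tau B)$ is bounded on $X_j$ with norm $1+C\tau$, and $\Psi_\tau$ satisfies the stability bound $\|\Psi_\tau^m\|_{X_j\to X_j}\le e^{C m\tau}$ uniformly in $m\tau\le T$. The exact flow is likewise bounded on each $X_j$ by Duhamel's formula. I expect the main obstacle to lie here: $B$ is not skew-adjoint and $r$ is only a Runge--Kutta approximation, so $e^{C\tau}$ is the best available bound and unitarity is lost. It remains sufficient for stability over $[0,T]$.

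For consistency, I would follow \cite{Jahnke:2000:EBF} and expand the local error $v(\tau)-\Psi_\tau v$ with the variation-of-constants formula for $e^{-\ic\tau(\partial_x^2+B)}$ and with Taylor expansion of $r$. The order conditions cancel all terms up to order $p$. The remainder is a sum of iterated integrals of products of free flows, bounded by $\tau^{p+1}$ times nested commutators of $B$ with $\partial_x^2$, of total length about $p$, acting on $v$. Each commutator with $\partial_x^2$ costs at most two derivatives, that is one step in the scale $X_j$, because $B$ is bounded on every $X_j$. Hence the local error is bounded in $X_a$ by $\tau^{p+1}\|v\|_{X_{a+p}}$ whenever $a+p\le k$. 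When $k-a<p$, I would instead truncate the expansion at order $m=k-a$. The terms of order at most $m$ still cancel by the order conditions, and the remainder gives $\tau^{m+1}\|v\|_{X_k}$. This is a counting argument and routine once the commutator structure is written down.

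Finally, the telescoping identity $v(t_n)-\Psi_\tau^n v^{(0)}=\sum_{m<n}\Psi_\tau^{n-1-m}\bigl(e^{-\ic\tau(\partial_x^2+B)}-\Psi_\tau\bigr)v(t_m)$ combines the two ingredients. Stability in $X_a$ controls the propagated errors, the local error bound in $X_a$ controls each summand, and $\sup_{t\le T}\|v(t)\|_{X_k}\lesssim\|v^{(0)}\|_{X_k}$ controls the solution. Summing $n\le T/\tau$ local errors of size $\tau^{\min(k-a,p)+1}$ yields the global error $\tau^{\min(k-a,p)}$. Conjugating back by $e^{\mathfrak{C}_k}$ then gives the stated estimate.
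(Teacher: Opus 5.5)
Your proposal is correct and follows the paper's route. You conjugate by $e^{\pm\mathfrak{C}_k}$ (Proposition~\ref{eq:prop_evident}, Lemma~\ref{lem:pas_si_mal}), use Theorem~\ref{thm:main} to get boundedness of $V^{\mathrm{cor}}_k$ on the scale $H^{2j}_{\mathrm{Dir}}$, $1\le j\le k$, and then run the classical Jahnke--Lubich splitting analysis. The paper only cites that analysis, and applies it again in the fully discrete Theorem~\ref{thm:conv}, whereas you spell out the stability, consistency and Lady Windermere steps. Your two details are the right ones: the norm $\|(\partial_x^2)^j\cdot\|_{L^2}$ makes the free flow an isometry, and restricting to $j\ge 1$ is needed because Theorem~\ref{thm:main} gives nothing on $L^2$, where point values such as $\partial_x u(y)$ appear in $[\partial_x^2,\mathfrak{C}_k]$.
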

\noindent Note that we use in general the following notations:
\begin{itemize}
\item $ \| a \|  \lesssim  \| b \|$ means bounded up to a constant $C$, $ \| a \|  \leq C \| b \|$, and
\item  $ \| a \|  \lesssim_{\beta_1, \ldots, \beta_m}  \| b \|$ means the constant may depend on $\beta_1, \ldots, \beta_m, m \geq 1$ (but not on $a$ and $b$), 
$$ \| a \|  \leq C(\beta_1, \ldots, \beta_m) \| b \|.$$
\end{itemize}

In Section~\ref{sec:convanal}, we prove Proposition~\ref{prop:disc_time} in a fully discretized setting.


\section{Space discretization} \label{sec:discret}
\sectionmark{\footnotesize Space discretization}

In remains to consider the space discretization of \eqref{eq:def_time_disc}. We aim at performing pseudo-spectral methods in order to get high accuracy. The main point is that we have to be cautious when discretizating $V^{\mathrm{cor}}_k$ because a bad discretization would easily lead to an unbounded operator (and so to instability). From now on, we consider $k\geq 2$, $u^{(0)} \in H^{2k}_V$ as fixed and the corrector function $\mathfrak{C}_k$ in terms of the associated coefficients $\alpha_{i,n,y}$ given by Proposition~\ref{prop:constr_correct}.

Throughout this section, we approximate the solution of \eqref{eq:ev_Dir} at time $t \in [0, T]$ by trigonometric polynomials, $u(t) \in \mathcal{P}_N$, where
$$
\mathcal{P}_N = \mathrm{Span}_{\mathbb{C}} (e^{-\pi \ic N \cdot},\cdots,e^{\pi \ic (N+1) \cdot}) \subset L^2(\mathbb{T}) ,\quad \mathrm{and} \quad  \mathbb{T} = \mathbb{R}/2\mathbb{Z}.
$$
and $N \gg 1$ is the parameter of discretization in space.


\subsection{Periodic extension} We have to represent numerically smooth functions which do not belong to $H^{2k}_{\mathrm{Dir}}$ because they do not satisfy the generalized Dirichlet boundary conditions. A natural approach is to consider these functions as restrictions on $[0,1]$ of smooth functions on $ \mathbb{T}$. In order to do so, we design an evolution equation whose solutions, when restricted to $(0,1)$, coincide with those of \eqref{eq:ev_Dir} (or its modified version \eqref{eq:cauchy_v}).

First, we introduce functions
\begin{itemize}
\item $W\in C^\infty(\mathbb{T};\mathbb{R})$ s.t. $W_{|[0,1]} = V$,
\item  $w^{(0)}\in H^{2k}(\mathbb{T})$ s.t. $w^{(0)}_{|[0,1]} = u^{(0)}$,
\item $\psi_{n,y}\in C^\infty(\mathbb{T};\mathbb{R})$ s.t. $\psi_{n,y} = \varphi_{n,y}$ on $(0,1)$ and $\psi_{n,y}=0$ in a neighbourhood of $-\frac{1}{2}$.
\end{itemize}
We consider the operator of extension by imparity, defined by
$$
\mathcal{E} : \left\{ \begin{array}{lll} \mathbb{C}^{[0,1]} &\to& \mathbb{C}^{\mathbb{T}} \\
u&\mapsto& \mathcal{E} u
\end{array}  \right. \qquad \mathrm{where} \qquad \left\{ \begin{array}{lll}  \mathcal{E} u(x) = u(x) \quad \mathrm{if} \quad x\in [0,1] \\ \mathcal{E} u(x) = -u(-x) \quad \mathrm{if} \quad x\in (-1,0)  \end{array}  \right.  
$$
We also introduce the operator of restriction 
$$
\mathcal{R} : \left\{ \begin{array}{lll} \mathbb{C}^{\mathbb{T}} &\to&  \mathbb{C}^{[0,1]} \\
u&\mapsto& u_{|[0,1]}
\end{array}  \right. 
$$
Furthermore, we denote by $\Lambda$ their composition $\Lambda = \mathcal{E}\mathcal{R}$.
We point out the following basic lemma that will be very useful.
\begin{lemma} \label{lem:bound_lambda_per} For all $j \geq 1$, $\Lambda$ is a bounded operator $ H^{2j}(\mathbb{T}) \cap \mathcal{R}^{-1} H^{2j}_{\mathrm{Dir}}([0,1]) \to H^{2j}(\mathbb{T})$.
\end{lemma}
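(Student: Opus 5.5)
Take $u\in H^{2j}(\mathbb{T})$ with $\mathcal{R}u\in H^{2j}_{\mathrm{Dir}}([0,1])$, set $f=\mathcal{R}u$ and $g=\Lambda u=\mathcal{E}f$. The plan is to show directly that $g\in H^{2j}(\mathbb{T})$ with $\|g\|_{H^{2j}(\mathbb{T})}\lesssim \|f\|_{H^{2j}(0,1)}\le\|u\|_{H^{2j}(\mathbb{T})}$. Since $\mathbb{T}=\mathbb{R}/2\mathbb{Z}$ is represented by $(-1,1]$, the odd extension has two gluing points: $x=0$ and $x=1\equiv-1$. Away from these points $g$ coincides locally with $f$ or with $x\mapsto -f(-x)$. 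Both pieces lie in $H^{2j}$ on their open intervals, and their $H^{2j}$ norms are each equal to $\|f\|_{H^{2j}(0,1)}$.

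The key step is the standard gluing criterion in one dimension. Suppose a function is $H^m$ on $(a,c)$ and on $(c,b)$. Then it is $H^m$ on $(a,b)$ if and only if its traces $\partial_x^i$ from both sides agree at $c$ for all $0\le i\le m-1$. In that case the weak derivatives up to order $m$ are the piecewise ones, with no Dirac masses appearing at $c$. This follows by integrating by parts against test functions, inductively in the order of differentiation, since each jump of $\partial_x^i$ would produce a $\delta$ term in $\partial_x^{i+1}$.

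Apply this with $m=2j$. At $x=0$, the left piece $-f(-x)$ has derivatives $\partial_x^i(-f(-\cdot))(0^-)=(-1)^{i+1}\partial_x^i f(0)$. For odd $i$ this matches $\partial_x^i f(0)$ automatically. For even $i=2\ell$ with $\ell<j$, matching requires $\partial_x^{2\ell}f(0)=-\partial_x^{2\ell}f(0)$, that is $\partial_x^{2\ell}f(0)=0$, which is precisely the condition defining $H^{2j}_{\mathrm{Dir}}$. At $x=1\equiv -1$, the right trace $\partial_x^i f(1)$ must match the trace from the other side, namely $(-1)^{i+1}\partial_x^i f(1)$ evaluated at $-1^+$. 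This gives the same parity analysis, and the conditions are the vanishing of $\partial_x^{2\ell}f(1)$.

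All orders needed, $i\le 2j-1$, are covered: odd orders match automatically and even orders $2\ell\le 2j-2$ are killed by the Dirichlet conditions. Hence $g\in H^{2j}(\mathbb{T})$, and the bound follows from $\|g\|_{H^{2j}(\mathbb{T})}^2=2\|f\|^2_{H^{2j}(0,1)}$ together with $\|f\|_{H^{2j}(0,1)}\le\|u\|_{H^{2j}(\mathbb{T})}$. Linearity of $\Lambda$ is clear.

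The main obstacle is simply doing the gluing lemma carefully, in particular handling the point $\pm1$ on the torus and checking that the traces are well defined. This is fine because $H^{2j}\subset C^{2j-1}$ in one dimension, so all traces involved are continuous.
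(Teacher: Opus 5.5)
Your proof is correct. At the two gluing points $0$ and $1\equiv -1$ the odd reflection has traces $(-1)^{i+1}\partial_x^i f$, so the odd-order traces match automatically. The even-order traces up to order $2j-2$ match exactly because of the $H^{2j}_{\mathrm{Dir}}$ conditions, and that is all the one-dimensional gluing criterion requires for $H^{2j}$.

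The paper states this lemma without proof as a basic fact, so there is no argument to compare against. Your trace-matching argument is the standard justification it implicitly relies on. It in fact proves directly the remark that follows in the paper, that $\mathcal{E}$ maps $H^{2j}_{\mathrm{Dir}}([0,1])$ boundedly into $H^{2j}_{odd}(\mathbb{T})$. The factor $2$ in your norm identity depends only on normalization, since the paper uses the measure $\mathrm{d}x/2$ on $\mathbb{T}$.
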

Then, we extend the operator $\mathcal{I}_{n,y} u$ by setting
$$
\mathcal{J}_{n,y} u : \left\{ \begin{array}{lll} L^1(\mathbb{T}) &\to& L^1(\mathbb{T}) \\
u&\mapsto& \mathcal{J}_{n,y}u
\end{array}  \right. 
$$
where
$$
\forall x\in (-1/2,3/2], \quad \mathcal{J}_{n,y} u (x) =  \frac1{(n-1)!} \int_y^x (x-z)^{n-1}  u(z) \mathrm{d}z.
$$
Note that it is still an iterated integral, similarly to Definition~\ref{def:integral} (it is defined in this way by conciseness). Then, we define the analogue of $\mathfrak{C}_k$ on the torus by 
$$
\mathfrak{E}_k u = \sum_{y\in \{0,1\}} \sum_{i=1}^{k-1} \sum_{n=1}^{2i+1} \alpha_{i,n,y}  \psi_{2i+1-n,y} \mathcal{J}_{n,y} u.
$$
Since the functions $ \psi_{i,y}$ vanish in a neighbourhood of $\{-1/2\}$, the following result holds true by construction.
\begin{lemma} \label{lem:bound_E_per}For all $s\geq 0$, $\mathfrak{E}_k$ is bounded on $H^{s}(\mathbb{T})$.
\end{lemma}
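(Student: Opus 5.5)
Since $\mathfrak{E}_k$ is a finite linear combination of the operators $u\mapsto \psi_{2i+1-n,y}\,\mathcal{J}_{n,y}u$, it suffices to show that each map $u\mapsto \psi\,\mathcal{J}_{n,y}u$ is bounded on $H^s(\mathbb{T})$, for $n\geq 1$, $y\in\{0,1\}$ and $\psi\in C^\infty(\mathbb{T};\mathbb{R})$ vanishing on a neighbourhood $(-\frac12-\delta,-\frac12+\delta)$ of $-\frac12$. I first treat integer $s=m\geq 0$; fractional $s$ then follows by interpolation between consecutive integers.

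The key observation concerns periodicity. Defined on the fundamental domain $(-1/2,3/2]$, the function $\mathcal{J}_{n,y}u$ is in general discontinuous, or non-smooth, at the cut point $-1/2\equiv 3/2$, because $\int_{-1/2}^{3/2}u\neq 0$ in general. However, the product $\psi\,\mathcal{J}_{n,y}u$ vanishes identically near that point. It therefore defines a genuine function on $\mathbb{T}$ that is smooth wherever $\mathcal{J}_{n,y}u$ is, and its $H^m(\mathbb{T})$ norm is simply the $H^m$ norm on the open interval $I=(-1/2,3/2)$ of the function $\psi\,\mathcal{J}_{n,y}u$, with no boundary contributions.

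It then remains to bound $\|\psi\,\mathcal{J}_{n,y}u\|_{H^m(I)}$. I apply Leibniz's rule to write
$$\partial_x^j(\psi\,\mathcal{J}_{n,y}u)=\sum_{l\le j}\binom{j}{l}\,\partial_x^{j-l}\psi\;\partial_x^l\mathcal{J}_{n,y}u .$$
On $I$ the derivatives satisfy $\partial_x^l\mathcal{J}_{n,y}u=\mathcal{J}_{n-l,y}u$ for $l\le n$, and $\partial_x^l\mathcal{J}_{n,y}u=\partial_x^{l-n}u$ for $l>n$. Each integral term $\mathcal{J}_{r,y}u$ with $r\ge 1$ is controlled in $L^\infty(I)$, hence in $L^2(I)$, by $C\|u\|_{L^1(I)}\le C\|u\|_{L^2(\mathbb{T})}$, using the explicit kernel $(x-z)^{r-1}/(r-1)!$ on the bounded interval. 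The remaining derivative terms are bounded by $\|u\|_{H^{m}(\mathbb{T})}$, and in fact only by $\|u\|_{H^{m-n}}$. Combining these estimates with $\|\partial_x^{j-l}\psi\|_{L^\infty}<\infty$ gives $\|\psi\,\mathcal{J}_{n,y}u\|_{H^m(\mathbb{T})}\lesssim\|u\|_{H^m(\mathbb{T})}$.

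The only delicate point is the first step, namely justifying that the weak derivatives on $\mathbb{T}$ coincide with the derivatives computed on $I$. This requires the cut-off: near $-1/2$ the function vanishes, so no Dirac mass or jump term appears at the cut. This is precisely why the construction requires the $\psi_{n,y}$ to vanish near $-1/2$.
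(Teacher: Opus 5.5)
Your proof is correct and uses the same idea as the paper. The paper proves this lemma in one line: it holds by construction because the $\psi_{i,y}$ vanish near $-1/2$. You have filled in the details the paper leaves implicit: why no jump terms appear at the cut, the Leibniz rule with $\partial_x^l\mathcal{J}_{n,y}=\mathcal{J}_{n-l,y}$ (where $\mathcal{J}_{0,y}u=u$), the $L^\infty$ bounds on the integral terms, and interpolation for non-integer $s$.
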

Moreover, proceeding exactly as in the proof of Theorem \ref{thm:main}, we get the following lemma.
\begin{lemma} \label{lem:bound_bracket_per} For all $s>3/2$, $[\partial_x^2,\mathfrak{E}_k]$ is bounded on $H^{s}(\mathbb{T})$.
\end{lemma}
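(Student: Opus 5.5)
By linearity of the commutator, $[\partial_x^2,\mathfrak{E}_k]$ is a finite linear combination of the operators $[\partial_x^2,\psi\mathcal{J}_{n,y}]$, with $\psi=\psi_{2i+1-n,y}\in C^\infty(\mathbb{T})$ vanishing near $-1/2$ and $1\le n\le 2k-1$. It therefore suffices to show that each such commutator is bounded on $H^s(\mathbb{T})$ for $s>3/2$. I would follow the proof of Theorem~\ref{thm:main} and compute the commutator explicitly. The integral is taken on the lifted interval $(-1/2,3/2]$, and the cut-off $\psi$ kills everything near the discontinuity point $-1/2$. Hence $\psi\mathcal{J}_{n,y}u$ is a genuine smooth-coefficient operator on $\mathbb{T}$. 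This is the content of Lemma~\ref{lem:bound_E_per}, which gives boundedness $H^s\to H^{s}$, and in fact $H^s\to H^{s+n}$ away from the cut.

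First I would write, by Leibniz,
\[
[\partial_x^2,\psi\mathcal{J}_{n,y}]u=(\partial_x^2\psi)\mathcal{J}_{n,y}u+2(\partial_x\psi)\partial_x\mathcal{J}_{n,y}u+\psi\bigl(\partial_x^2\mathcal{J}_{n,y}u-\mathcal{J}_{n,y}\partial_x^2u\bigr).
\]
On $(-1/2,3/2)$ we have $\partial_x\mathcal{J}_{n,y}=\mathcal{J}_{n-1,y}$ for $n\ge1$, with $\mathcal{J}_{0,y}=\mathrm{Id}$. Integrating by parts twice in the Taylor-type formula gives
\[
\mathcal{J}_{n,y}\partial_x^2u=\mathcal{J}_{n-2,y}u-\sum_{m}c_m(x-y)^m\,\partial_x^{r}u(y),
\]
where the correction is a polynomial in $x-y$ whose coefficients are $u(y)$ and $\partial_xu(y)$.

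Second, I would sort the terms into cases.
\begin{itemize}
\item For $n\ge2$, $\partial_x^2\mathcal{J}_{n,y}=\mathcal{J}_{n-2,y}$, so the bracket reduces to $\psi$ times the boundary-polynomial terms. These involve only $u(y)$ and $\partial_xu(y)$, multiplied by $\psi\,(x-y)^m$, which is a smooth function on $\mathbb{T}$ thanks to the cut-off.
\item For $n=1$, one gets $\partial_x^2\mathcal{J}_{1,y}u-\mathcal{J}_{1,y}\partial_x^2u=\partial_xu-\partial_xu+\partial_xu(y)$. The first-order terms cancel exactly as in Theorem~\ref{thm:main}, leaving $\psi\,\partial_xu(y)$.
\end{itemize}
The remaining pieces are also bounded: $(\partial_x^2\psi)\mathcal{J}_{n,y}$ by Lemma~\ref{lem:bound_E_per}, and $2(\partial_x\psi)\mathcal{J}_{n-1,y}$ likewise when $n\ge2$. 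When $n=1$, the term $2(\partial_x\psi)u$ is just multiplication by a smooth function.

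The main obstacle is the point evaluations $u(y)$ and $\partial_xu(y)$, which have to be bounded functionals on $H^s(\mathbb{T})$. Here the hypothesis $s>3/2$ enters: by the Sobolev embedding $H^s(\mathbb{T})\hookrightarrow C^1(\mathbb{T})$, we get $|\partial_xu(y)|+|u(y)|\lesssim\|u\|_{H^s}$. Each boundary term is then such a scalar times a fixed $C^\infty(\mathbb{T})$ function, hence bounded on $H^s$. One also has to check that $\mathcal{J}_{n,y}$ followed by multiplication by $\psi$ creates no jump at $-1/2\equiv3/2$; this is exactly ensured by $\psi$ vanishing near $-1/2$. Summing the finitely many bounded pieces gives the lemma.
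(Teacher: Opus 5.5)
Your proposal is correct and follows the paper's route, which is to proceed exactly as in the proof of Theorem~\ref{thm:main}. You expand $[\partial_x^2,\mathfrak{E}_k]$ into the pieces $[\partial_x^2,\psi\mathcal{J}_{n,y}]$, use the cancellation of the first-order terms for $n=1$ (leaving $\psi\,\partial_x u(y)$), control the point evaluations through $H^s(\mathbb{T})\hookrightarrow C^1(\mathbb{T})$ for $s>3/2$, and rely on the cut-off $\psi$ to avoid any contribution from the jump at $-1/2$. The only minor difference is that for $n\ge 2$ you compute the commutator explicitly by integration by parts, obtaining $\psi$ times polynomial terms in $u(y)$ and $\partial_x u(y)$, whereas the paper uses the smoothing $\mathcal{I}_{n,y}:H^a\to H^{a+n}$ and composition; your version is, if anything, slightly cleaner when $3/2<s<2$.
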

Furthermore, the spaces of odd periodic functions are denoted by
$$
H^s_{odd}(\mathbb{T}) = \{ u\in H^s(\mathbb{T}) \ | \ u = \Lambda u\}, \quad s\geq 0.
$$
\begin{remark}
Note that for all $j\geq 0$, $\mathcal{E}$ is bounded from $H^{2j}_{\mathrm{Dir}}([0,1]) $ to $H^{2j}_{odd}(\mathbb{T})$ and $\mathcal{R} $ is bounded form $ H^{2j}(\mathbb{T})$ to $ H^{2j}([0,1])$. Moreover, $\mathcal{R}$ maps $H^{2j}_{odd}(\mathbb{T})$ to $H^{2j}_{\mathrm{Dir}}([0,1])$.
\end{remark}
We can now define the modified potential by
$$
W^{\mathrm{cor}}_k =  \Lambda  e^{- \mathfrak{E}_k  } (\partial_x^2 + W) e^{ \mathfrak{E}_k  }- \partial_x^2,
$$
and we show the following result as a corollary of Theorem \ref{thm:main}.
\begin{corollary} \label{cor:bound_W}
For all $j\in \{1,\ldots,k\}$, $W^{\mathrm{cor}}_k$ is bounded on $H^{2j}_{odd}(\mathbb{T})$.
\end{corollary}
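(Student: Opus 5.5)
The plan is to reduce the statement to Theorem~\ref{thm:main} by restricting to $[0,1]$, applying the interval result there, and extending back by oddness with Lemma~\ref{lem:bound_lambda_per}.

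\emph{Step 1: an intertwining identity.} For $w\in H^{2j}_{odd}(\mathbb{T})$ set $u=\mathcal{R}w\in H^{2j}_{\mathrm{Dir}}$. Since $\psi_{n,y}=\varphi_{n,y}$ on $(0,1)$ and $\mathcal{J}_{n,y}$ coincides with $\mathcal{I}_{n,y}$ for $x\in[0,1]$, we have $\mathcal{R}\mathfrak{E}_k=\mathfrak{C}_k\mathcal{R}$. Indeed, $\mathcal{J}_{n,y}w(x)$ for $x\in[0,1]$ only involves the values of $w$ between $y$ and $x$, so it stays inside $[0,1]$. The same holds for $W$, since $\mathcal{R}W=V\mathcal{R}$, and for $\partial_x^2$, which is local. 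Composing, and summing the power series of the exponentials (they converge in operator norm by Lemma~\ref{lem:bound_E_per}), we get
$$\mathcal{R}\, e^{-\mathfrak{E}_k}(\partial_x^2+W)e^{\mathfrak{E}_k} w = e^{-\mathfrak{C}_k}(\partial_x^2+V)e^{\mathfrak{C}_k}\mathcal{R}w = (\partial_x^2+V^{\mathrm{cor}}_k)u .$$

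\emph{Step 2: rewriting $W^{\mathrm{cor}}_k$.} Since $w$ is odd, $\partial_x^2 w$ is odd, so $\partial_x^2 w=\Lambda\partial_x^2 w$. Therefore
$$W^{\mathrm{cor}}_k w=\Lambda\big(e^{-\mathfrak{E}_k}(\partial_x^2+W)e^{\mathfrak{E}_k}w-\partial_x^2 w\big)=\mathcal{E}\,V^{\mathrm{cor}}_k\,\mathcal{R}w .$$
This shows that $W^{\mathrm{cor}}_k w$ is odd. By Theorem~\ref{thm:main}, $V^{\mathrm{cor}}_k\mathcal{R}w\in H^{2j}_{\mathrm{Dir}}$ with norm $\lesssim\|\mathcal{R}w\|_{H^{2j}}\lesssim\|w\|_{H^{2j}(\mathbb{T})}$. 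Finally, $\mathcal{E}$ is bounded from $H^{2j}_{\mathrm{Dir}}$ to $H^{2j}_{odd}(\mathbb{T})$ by the preceding remark, which is where Lemma~\ref{lem:bound_lambda_per} enters. This gives the bound.

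\emph{Main obstacle.} The delicate point is that Step 1 must hold as an operator identity and not only formally. First, $e^{\mathfrak{E}_k}w$ is generally not odd and not in $H^{2j}_{odd}$, so one has to make sure that no oddness is used there. Only locality is used: all operators involved ($\mathfrak{E}_k$, multiplication by $W$, $\partial_x^2$) determine their values on $[0,1]$ from values on $[0,1]$. This is why Step 1 restricts first and only afterwards applies $\Lambda$.

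Second, $\partial_x^2$ is unbounded on $H^{2j}$. I would therefore first prove the identity for smooth odd $w$ (or $w\in H^{2j+2}_{odd}$). I would then extend by density, using that the right-hand side $\mathcal{E}V^{\mathrm{cor}}_k\mathcal{R}$ is bounded. On the torus side, the expression $e^{-\mathfrak{E}_k}(\partial_x^2+W)e^{\mathfrak{E}_k}-\partial_x^2$ is itself bounded on $H^s$, $s>3/2$, by the commutator expansion and Lemma~\ref{lem:bound_bracket_per}, so both sides extend continuously and agree.
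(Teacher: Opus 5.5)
Your proposal is correct and follows the paper's own argument. You use $\mathcal{R}\mathfrak{E}_k=\mathfrak{C}_k\mathcal{R}$ together with locality of $\partial_x^2$ and $W$ to get $W^{\mathrm{cor}}_k w=\mathcal{E}V^{\mathrm{cor}}_k\mathcal{R}w$, then conclude by Theorem~\ref{thm:main} and boundedness of $\mathcal{E}$ from $H^{2j}_{\mathrm{Dir}}$ to $H^{2j}_{odd}(\mathbb{T})$. Your explicit use of the oddness of $w$ to write $\partial_x^2 w=\Lambda\partial_x^2 w$ is genuinely needed, since the paper states that identity for all $u\in H^2(\mathbb{T})$, where it requires $u$ odd; your density step is harmless but unnecessary, because for $w\in H^2$ every term is already defined in $L^2$.
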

\begin{proof} 
First, we note that $\mathcal{R} \mathfrak{E}_k = \mathfrak{C}_k \mathcal{R} $ by construction. Thus, since $\mathfrak{E}_k $ is bounded on $L^2$, we have for all $t\in \mathbb{R}$,
$$
\forall u \in L^2(\mathbb{T}), \quad \mathcal{R} \exp(t \mathfrak{E}_k  ) u = \exp(t \mathfrak{E}_k  )\mathcal{R}u.
$$
Thus, since the operators $\partial_x^2$ and $W$ are local, we have
\begin{equation}
\label{eq:rutile}
\forall u \in H^2(\mathbb{T}), \quad  W^{\mathrm{cor}}_k u =\mathcal{E}V^{\mathrm{cor}}_k \mathcal{R} u  .
\end{equation}
Due to Theorem \ref{thm:main}, $V^{\mathrm{cor}}_k$ is bounded on $H^{2j}_{\mathrm{Dir}}$, and it follows that $W^{\mathrm{cor}}_k$ is bounded on $H^{2j}_{odd}(\mathbb{T})$.
\end{proof}

Finally, the following proposition states that we have designed a periodic extension of \eqref{eq:ev_Dir}.
\begin{proposition} \label{prop:period_ext}
 For all $t \in \mathbb{R}$, we have
 \begin{equation}
 \label{eq:relay}
 e^{-\ic t (\partial_x^2+V)} u^{(0)} = \mathcal{R}  e^{ \mathfrak{E}_k  } e^{-\ic t (\partial_x^2+W_k^{\mathrm{cor}})} \Lambda e^{ -\mathfrak{E}_k  } w^{(0)}.
 \end{equation}
\end{proposition}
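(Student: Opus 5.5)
The plan is to reduce \eqref{eq:relay} to Proposition~\ref{eq:prop_evident} by conjugating with $\mathcal{E}$ and $\mathcal{R}$. The key identities are $\mathcal{R}\mathfrak{E}_k = \mathfrak{C}_k\mathcal{R}$, hence $\mathcal{R}e^{\pm\mathfrak{E}_k} = e^{\pm\mathfrak{C}_k}\mathcal{R}$ (as in the proof of Corollary~\ref{cor:bound_W}), together with \eqref{eq:rutile}, namely $W^{\mathrm{cor}}_k = \mathcal{E}V^{\mathrm{cor}}_k\mathcal{R}$. We also use $\mathcal{R}\mathcal{E} = \mathrm{Id}$ and the fact that $\partial_x^2$ commutes with $\mathcal{E}$ on $H^{2j}_{\mathrm{Dir}}$; note that odd extension of functions vanishing at $0$ and $1$ together with their even derivatives stays in $H^{2j}$.

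\textbf{Initial datum.} Since $\mathcal{R}w^{(0)} = u^{(0)}\in H^{2k}_V$, we get $\mathcal{R}e^{-\mathfrak{E}_k}w^{(0)} = e^{-\mathfrak{C}_k}u^{(0)} =: v^{(0)}$. By Lemma~\ref{lem:pas_si_mal}, $v^{(0)}\in H^{2k}_{\mathrm{Dir}}$. Therefore $\Lambda e^{-\mathfrak{E}_k}w^{(0)} = \mathcal{E}v^{(0)} \in H^{2k}_{odd}(\mathbb{T})$ (this is also Lemma~\ref{lem:bound_lambda_per}).

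\textbf{Flows on the torus and on the interval.} Let $v(t) = e^{-\ic t(\partial_x^2+V^{\mathrm{cor}}_k)}v^{(0)}$ be the solution from Corollary~\ref{cor:cauchy}, so $v\in C^0(\mathbb{R};H^{2k}_{\mathrm{Dir}})\cap C^1(\mathbb{R};H^{2k-2}_{\mathrm{Dir}})$. Set $w(t)=\mathcal{E}v(t)$, which lies in $C^0(\mathbb{R};H^{2k}_{odd})\cap C^1(\mathbb{R};H^{2k-2}_{odd})$. Then
\[
\ic\partial_t w = \mathcal{E}(\partial_x^2+V^{\mathrm{cor}}_k)v = \partial_x^2\mathcal{E}v + \mathcal{E}V^{\mathrm{cor}}_k\mathcal{R}\mathcal{E}v = (\partial_x^2+W^{\mathrm{cor}}_k)w .
\]
Corollary~\ref{cor:bound_W} says $W^{\mathrm{cor}}_k$ is bounded on $H^{2j}_{odd}(\mathbb{T})$. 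Hence $\partial_x^2+W^{\mathrm{cor}}_k$ generates a well-defined group on $H^{2k}_{odd}$, by Duhamel's formula around the unitary group $e^{-\ic t\partial_x^2}$ (which preserves oddness), and uniqueness holds. This gives $e^{-\ic t(\partial_x^2+W^{\mathrm{cor}}_k)}\mathcal{E}v^{(0)} = \mathcal{E}v(t)$. This identification is the step needing the most care. One must make sure the flow $e^{-\ic t(\partial_x^2+W_k^{\mathrm{cor}})}$ in \eqref{eq:relay} is understood on $H^{2k}_{odd}(\mathbb{T})$, where $W^{\mathrm{cor}}_k$ is bounded; on all of $H^2(\mathbb{T})$ it need not be, because of $\Lambda$. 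One must also check that the $C^1$ regularity is enough for uniqueness. I would argue uniqueness via an energy estimate in $L^2_{odd}$ or $H^{2}_{odd}$ combined with Gronwall, using boundedness of $W^{\mathrm{cor}}_k$ on $H^2_{odd}$.

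\textbf{Conclusion.} Applying $\mathcal{R}e^{\mathfrak{E}_k}$ gives
\[
\mathcal{R}e^{\mathfrak{E}_k}\mathcal{E}v(t) = e^{\mathfrak{C}_k}\mathcal{R}\mathcal{E}v(t) = e^{\mathfrak{C}_k}v(t) .
\]
By Proposition~\ref{eq:prop_evident}, $e^{\mathfrak{C}_k}v(t)$ is the unique smooth solution of \eqref{eq:ev_Dir} with initial value $e^{\mathfrak{C}_k}v^{(0)} = u^{(0)}$, i.e.\ $e^{-\ic t(\partial_x^2+V)}u^{(0)}$. This proves \eqref{eq:relay}.
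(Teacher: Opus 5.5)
Your proof is correct, but it runs in the opposite direction from the paper's.

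\textbf{The paper's route.} The paper starts on the torus. It defines $z(t)=e^{-\ic t(\partial_x^2+W^{\mathrm{cor}}_k)}\Lambda e^{-\mathfrak{E}_k}w^{(0)}$ and restricts it. Using \eqref{eq:rutile}, it checks that $\mathcal{R}z$ solves \eqref{eq:cauchy_v} with values in $H^{2k}_{\mathrm{Dir}}$. It then invokes the uniqueness already provided by Corollary~\ref{cor:cauchy}.

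\textbf{Your route.} You start on the interval with the solution $v(t)$ from Corollary~\ref{cor:cauchy}. You extend it oddly, verify that $\mathcal{E}v$ solves the torus equation, and invoke uniqueness for the torus problem.

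\textbf{What each buys.} The paper's direction is slightly more economical. $\mathcal{R}$ commutes with $\partial_x^2$ on every $H^2(\mathbb{T})$ function, whereas you need $\partial_x^2\mathcal{E}=\mathcal{E}\partial_x^2$, which relies on the Dirichlet conditions. Also, uniqueness on the interval is already established, while you must prove uniqueness on the torus yourself. Your version, on the other hand, makes explicit something the paper uses silently. The flow $e^{-\ic t(\partial_x^2+W^{\mathrm{cor}}_k)}$ only makes sense as a group on $H^{2j}_{odd}(\mathbb{T})$, obtained from Corollary~\ref{cor:bound_W} by bounded perturbation of $e^{-\ic t\partial_x^2}$. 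The paper needs exactly this to define $z(t)$ and to assert $z(t)\in H^{2k}_{odd}$. You also cite the right result, Lemma~\ref{lem:pas_si_mal}, for $e^{-\mathfrak{C}_k}u^{(0)}\in H^{2k}_{\mathrm{Dir}}$.

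\textbf{One small correction.} Run your Gr\"onwall uniqueness argument in $H^2_{odd}$, not $L^2_{odd}$. In general $W^{\mathrm{cor}}_k$ is not bounded on $L^2$. Its expansion contains point evaluations such as $\varphi\,\partial_x u(y)$, coming from $[\partial_x^2,\varphi\mathcal{I}_{1,y}]$ (see the proof of Theorem~\ref{thm:main}), and similar terms from $[\partial_x^2,\varphi\mathcal{I}_{2,y}]$. With $H^2_{odd}$ and Corollary~\ref{cor:bound_W} for $j=1$, the argument goes through as you describe.
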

\begin{proof}
First, as in the proof of Corollary \ref{cor:bound_W}, we have $\mathcal{R}  e^{ t\mathfrak{E}_k  } =  e^{ t\mathfrak{E}_k  } \mathcal{R}$. Thus, we have by construction
$$
\mathcal{R} \Lambda e^{ -\mathfrak{E}_k  } w^{(0)} = e^{ -\mathfrak{C}_k  } u^{(0)}.
$$
By Theorem \ref{thm:main}, we have $\mathcal{R}e^{ -\mathfrak{E}_k  } w^{(0)} \in H^{2k}_{\mathrm{Dir}}$, and thus, $\Lambda e^{ -\mathfrak{E}_k  } w^{(0)} \in H^{2k}_{odd}$. Then we set
$$
\forall t \in \mathbb{R}, \quad z(t) = e^{-\ic t (\partial_x^2+W_k^{\mathrm{cor}})} \Lambda e^{ -\mathfrak{E}_k  } w^{(0)}.
$$
and we have $z(t) \in H^{2k}_{odd} $ for all $t\in \mathbb{R}$.  By construction and using \eqref{eq:rutile}, its restriction satisfies
$$
\ic \partial_t \mathcal{R} z =  \mathcal{R} (\partial_x^2+W_k^{\mathrm{cor}}) z= (\partial_x^2+V_k^{\mathrm{cor}})\mathcal{R}  z.
$$
Since $ \mathcal{R} z  \in C^0( \mathbb{R};H^{2k}_{\mathrm{Dir}})$ it follows by uniqueness of the solution to \eqref{eq:cauchy_v} that
$$
\forall t \in \mathbb{R}, \quad \mathcal{R} z(t) = e^{-\ic t (\partial_x^2+V_k^{\mathrm{cor}})}\mathcal{R} z(0) = e^{-\ic t (\partial_x^2+V_k^{\mathrm{cor}})}e^{ -\mathfrak{C}_k  } u^{(0)}.
$$
Finally, we conclude by Proposition \ref{eq:prop_evident} that \eqref{eq:relay} holds.
\end{proof}


\subsection{Discretization in space by pseudo-spectral methods} It remains to perform the space discretization. We have to discretize $\partial_x^2$, $\mathfrak{E}_k, W^{\mathrm{cor}}_k$ as operators acting on $\mathcal{P}_N$. First, we define the discrete torus
$$
\mathbb{T}_N = \Delta x \big( \mathbb{Z}/(2N+2) \mathbb{Z} \big),
$$
for a grid spacing $\Delta x = \frac{1}{N+1}$.
Then, we introduce the discrete Fourier transform by setting
$$
\mathcal{F}_N^{-1} : \left\{ \begin{array}{lll} \mathcal{P}_N &\to &\mathbb{C}^{\mathbb{T}_N } \\ w & \mapsto & w_{| \mathbb{T}_N} \end{array} \right.
$$
Note that the usual formula holds too. Furthermore, we denote by $\Pi_{\mathcal{P}_N}$ the orthogonal projector on $\mathcal{P}_N$, i.e. $\Pi_{\mathcal{P}_N} : L^2(\mathbb{T}) \to \mathcal{P}_N$,
$$
\Pi_{\mathcal{P}_N} \sum_{k\in \mathbb{Z}} w_k e^{\ic \pi k \cdot} =  \sum_{k=-N}^{N+1} w_k e^{\ic \pi k \cdot},    \quad \forall w \in \mathbb{C}^{\mathbb{Z}}.
$$
We introduce the operator of restriction to the grid
$$
\mathcal{A}_N :\left\{ \begin{array}{lll} C^0(\mathbb{T}) &\to & \mathcal{P}_N  \\ w & \mapsto & \mathcal{F}_N w_{| \mathbb{T}_N} \end{array} \right.
$$
Moreover, we introduce the desaliased product
\begin{equation*} 
\forall v,w \in \mathcal{P}_N, \quad v \diamond w = \Pi_{\mathcal{P}_N} (vw).
\end{equation*}
This desaliased product can easily be computed in practice thanks to the following classical lemma.
\begin{lemma}
For all $v,w \in \mathcal{P}_N$, we have $v \diamond w = \Pi_{\mathcal{P}_N} \mathcal{A}_{2N+1}(vw)$.
\end{lemma}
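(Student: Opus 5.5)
The plan is to show that, for $v,w\in\mathcal{P}_N$, the product $vw$ already lies in $\mathcal{P}_{2N+1}$. Once this holds, the operator $\mathcal{A}_{2N+1}$ leaves $vw$ unchanged. Applying $\Pi_{\mathcal{P}_N}$ to both sides of $\mathcal{A}_{2N+1}(vw)=vw$ then gives $\Pi_{\mathcal{P}_N}\mathcal{A}_{2N+1}(vw)=\Pi_{\mathcal{P}_N}(vw)=v\diamond w$, which is the claim.

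\emph{Frequency support of the product.} Write $v=\sum_{k=-N}^{N+1}v_k e^{\ic\pi k\cdot}$ and $w=\sum_{k=-N}^{N+1}w_k e^{\ic\pi k\cdot}$. Then $vw$ is a combination of $e^{\ic\pi m\cdot}$ with $m=k+k'$ and $-2N\le m\le 2N+2$. The space $\mathcal{P}_{2N+1}$ is spanned by the frequencies $-(2N+1)\le m\le 2N+2$, a range that contains $[-2N,2N+2]$. Hence $vw\in\mathcal{P}_{2N+1}$.

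\emph{$\mathcal{A}_M$ is the identity on $\mathcal{P}_M$.} Fix $M\ge 1$. The grid $\mathbb{T}_M$ consists of the $2M+2$ points $x_j=j/(M+1)$, and $\dim\mathcal{P}_M=2M+2$. On the grid,
$$e^{\ic\pi k x_j}=e^{2\ic\pi kj/(2M+2)}.$$
These are the characters of $\mathbb{Z}/(2M+2)\mathbb{Z}$, and they depend only on $k \bmod (2M+2)$. The frequencies $-M,\dots,M+1$ are $2M+2$ consecutive integers, so they are pairwise distinct modulo $2M+2$. By the discrete orthogonality relations, the corresponding grid vectors are orthogonal and hence linearly independent. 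Therefore the restriction map $\mathcal{F}_M^{-1}:\mathcal{P}_M\to\mathbb{C}^{\mathbb{T}_M}$ is injective, and by the dimension count it is a bijection; this is the map whose inverse is written $\mathcal{F}_M$. It follows that for $f\in\mathcal{P}_M$,
$$\mathcal{A}_M f=\mathcal{F}_M\big(f_{|\mathbb{T}_M}\big)=\mathcal{F}_M\mathcal{F}_M^{-1}f=f.$$

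\emph{Conclusion.} Take $M=2N+1$ and $f=vw$ in the previous step to get $\mathcal{A}_{2N+1}(vw)=vw$, and then apply $\Pi_{\mathcal{P}_N}$ as described at the start. There is no real obstacle here. The only points needing care are the frequency convention for $\mathcal{P}_M$ (asymmetric, from $-M$ to $M+1$) and checking that the product's frequency range $[-2N,2N+2]$ fits inside $[-(2N+1),2N+2]$, which it does, with the upper endpoint $2N+2$ included.
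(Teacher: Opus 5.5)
Your proof is correct and follows the same route as the paper. Both arguments observe that $vw\in\mathcal{P}_{2N+1}$, conclude $\mathcal{A}_{2N+1}(vw)=vw$, and then apply $\Pi_{\mathcal{P}_N}$; you additionally spell out the frequency range $[-2N,2N+2]\subset[-(2N+1),2N+2]$ and why restriction to the grid is a bijection on $\mathcal{P}_M$, both of which the paper takes for granted.
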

\begin{proof}
We note that $vw \in  \mathcal{P}_{2N+1}$. As a consequence, we have that $vw = \mathcal{A}_{2N+1}(vw)$ and so $v \diamond w = \Pi_{\mathcal{P}_N} \mathcal{A}_{2N+1}(vw)$.
\end{proof}
Note that the coefficient $2$ could be replaced by $3/2$ but it would require extra notations. Moreover note that $\Delta x$ becomes $(4N+4)^{-1}$ when considering $\mathcal{A}_{2N+1}$.

In order to define the operator, we discretize the functions $\psi_{2i+1-n,y}$ and $w^{(0)}$ by setting
 $$
 \psi_{2i+1-n,y,N} = \mathcal{A}_N  \psi_{2i+1-n,y}, \quad  \mathrm{and} \quad w^{(0)}_N =  \mathcal{A}_N  w^{(0)}.
 $$
Then, we discretize $\mathfrak{E}_k$ by setting 
\begin{equation*} 
\begin{split}
\mathfrak{E}_{k,N} =& \sum_{y\in \{0,1\}} \sum_{i=1}^{k-1} \sum_{n=2}^{2i+1} \alpha_{i,n,y} \mathcal{A}_N  ( \psi_{2i+1-n,y} \mathcal{J}_{n,y} )  \\
&+ \sum_{y\in \{0,1\}} \sum_{i=1}^{k-1} \alpha_{i,1,y}  \big( \psi_{2i,y,N} \diamond  \mathcal{J}_{1,y}^{\mathrm{main}} + \mathcal{A}_N (  \psi_{2i,y} \mathcal{J}_{1,y}^{\mathrm{bound}}  ) \big)
\end{split}
\end{equation*}
where we use the notations
$$
\mathcal{J}_{n,y}^{\mathrm{main}}  e^{\ic \pi k \cdot} = \mathds{1}_{k\neq 0} (\ic \pi k)^{-n}  e^{\ic \pi k \cdot} \quad \text{and} \quad \mathcal{J}_{n,y}^{\mathrm{bound}} = \mathcal{J}_{n,y} - \mathcal{J}_{n,y}^{\mathrm{main}} .
$$
Note that for all $u\in L^1(\mathbb{T})$ and all $x\in (-1/2,3/2]$,
\begin{equation*}
\mathcal{J}_{n,y}^{\mathrm{bound}} u (x) = \frac{(x-y)^n}{n!} \int_{\mathbb{T}} u(z) \frac{\mathrm{d}z}2 - \sum_{j=0}^{n-1} \frac{(x-y)^j}{j!} \mathcal{J}_{n-j,y}^{\mathrm{main}}u(y).
\end{equation*}
 Then  we discretize $\Lambda$ by setting
$$
\Lambda_N w = \mathcal{F}_N v,
$$
where $v\in \mathbb{C}^{\mathbb{T}_N}$ is defined by
$$
v(x) = w(x), \quad \forall x\in \Delta x\{1,\ldots,N\} \quad \mathrm{and} \quad v(-x) = -v(x) \quad \forall x \in \mathbb{T}_N.
$$ 
We discretize the modified, extended potential $W^{\mathrm{cor}}_k$ by setting
\begin{equation*}
W^{\mathrm{cor}}_{k,N} =  \Lambda_N  e^{- \mathfrak{E}_{k,N}  } (\partial_x^2 + \mathcal{A}_N (W\cdot) ) e^{ \mathfrak{E}_{k,N}  }- \partial_x^2 .
\end{equation*}

Finally, we state the following convergence result for this space discretization.
\begin{proposition} \label{prop:cvg_space}
Let $T>0$ and $t\in [0,T]$, then we denote
\begin{equation} \label{def:v_vN}
v(t) = e^{ \mathfrak{E}_k  } e^{-\ic t (\partial_x^2+W_k^{\mathrm{cor}})} \Lambda e^{ -\mathfrak{E}_k  } w^{(0)}, \quad \mathrm{and} \quad v_N(t) =  e^{ \mathfrak{E}_{k,N}  } e^{-\ic t (\partial_x^2+W_{k,N}^{\mathrm{cor}})} \Lambda_N e^{ -\mathfrak{E}_{k,N}  } w_N^{(0)}.
\end{equation}
If $1\leq a \leq k$, there holds
\begin{equation} \label{eq:lapin}
 \| v(t) - v_N(t)  \|_{H^{2a}} \! \! \lesssim_{T} \! \!  N^{-2(k-a)} \|w^{(0)} \|_{H^{2k}}.
\end{equation}
\end{proposition}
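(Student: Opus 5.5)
The proof splits the error into the three stages of the composition \eqref{def:v_vN}: pre-processing, evolution, and post-processing. We compare each discrete operator with its continuous counterpart, in operator norm from $H^{2k}$ (or $H^{2k}_{odd}$) to $H^{2a}$, with a loss of $N^{-2(k-a)}$. Concretely, we write
\begin{align*}
v(t)-v_N(t) ={}& \big(e^{\mathfrak{E}_k}-e^{\mathfrak{E}_{k,N}}\big) z(t) + e^{\mathfrak{E}_{k,N}}\big(e^{-\ic t(\partial_x^2+W^{\mathrm{cor}}_k)} - e^{-\ic t(\partial_x^2+W^{\mathrm{cor}}_{k,N})}\big) z(0)\\
&+ e^{\mathfrak{E}_{k,N}} e^{-\ic t(\partial_x^2+W^{\mathrm{cor}}_{k,N})}\big(\Lambda e^{-\mathfrak{E}_k}w^{(0)} - \Lambda_N e^{-\mathfrak{E}_{k,N}} w^{(0)}_N\big),
\end{align*}
where $z(t)=e^{-\ic t(\partial_x^2+W^{\mathrm{cor}}_k)}\Lambda e^{-\mathfrak{E}_k}w^{(0)}$. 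By Proposition~\ref{prop:period_ext} and Corollary~\ref{cor:bound_W}, $z(t)$ is bounded in $H^{2k}_{odd}$ by $\|w^{(0)}\|_{H^{2k}}$ uniformly for $t\in[0,T]$.

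The first ingredient is a set of classical interpolation and projection estimates on $\mathcal{P}_N$. These are $\|(\mathrm{Id}-\Pi_{\mathcal{P}_N})w\|_{H^{2a}}\lesssim N^{-2(k-a)}\|w\|_{H^{2k}}$, and the same bound for $\mathrm{Id}-\mathcal{A}_N$ (valid since $2a\ge 2>1/2$). From these we derive, term by term in the definition of $\mathfrak{E}_{k,N}$, the consistency bound $\|(\mathfrak{E}_k-\mathfrak{E}_{k,N})w\|_{H^{2a}}\lesssim N^{-2(k-a)}\|w\|_{H^{2k}}$. For $n\ge 2$ this is immediate, because $\psi\mathcal{J}_{n,y}w$ is smooth and gains $n$ derivatives. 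For $n=1$ the splitting into $\mathcal{J}^{\mathrm{main}}$ and $\mathcal{J}^{\mathrm{bound}}$ is used: $\mathcal{J}^{\mathrm{bound}}$ produces smooth polynomials times bounded linear functionals, and $\psi_N\diamond\mathcal{J}^{\mathrm{main}}_{1,y}w$ differs from $\psi\mathcal{J}^{\mathrm{main}}_{1,y}w$ only by projection errors and by $\psi-\psi_N$, which is spectrally small. We also need uniform boundedness of $\mathfrak{E}_{k,N}$ on $H^{2a}\cap\mathcal{P}_N$, which follows from Lemma~\ref{lem:bound_E_per} together with the boundedness of $\Pi_{\mathcal{P}_N}$ and $\mathcal{A}_N$ on trigonometric polynomials. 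Expanding the exponential series then gives the same consistency estimate for $e^{\pm\mathfrak{E}_{k,N}}$ versus $e^{\pm\mathfrak{E}_k}$, and uniform bounds on $e^{\pm\mathfrak{E}_{k,N}}$. This handles the first term. For the third term we additionally need $\Lambda_N$ versus $\Lambda$: on the grid, $\Lambda_N\mathcal{A}_N=\mathcal{A}_N\Lambda$ for continuous functions vanishing at $0,1$. The odd extension of an $H^{2k}_{\mathrm{Dir}}$ function lies in $H^{2k}$ by Lemma~\ref{lem:bound_lambda_per}, so the interpolation estimates apply again.

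The main obstacle is the middle term, which requires \emph{stability} of the discrete flow: the family $W^{\mathrm{cor}}_{k,N}$ must be bounded on $\mathcal{P}_N$ in $H^{2a}$ uniformly in $N$, on the invariant discrete odd subspace. I would handle it by mimicking the proof of Theorem~\ref{thm:main}. We expand
\[
e^{-\mathfrak{E}_{k,N}}\partial_x^2 e^{\mathfrak{E}_{k,N}}-\partial_x^2=\sum_{q\ge1}\frac{(-1)^q}{q!}\mathrm{ad}^q_{\mathfrak{E}_{k,N}}\partial_x^2,
\]
and show that $[\partial_x^2,\mathfrak{E}_{k,N}]$ is bounded uniformly in $N$. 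This is exactly why the $n=1$ term is discretized with the dealiased product and the Fourier multiplier $\mathcal{J}^{\mathrm{main}}_{1,y}$. The commutator of $\partial_x^2$ with $\psi_N\diamond\mathcal{J}^{\mathrm{main}}_{1,y}$ reproduces the cancellation of the first-order terms, up to commutators of $\partial_x^2$ with $\Pi_{\mathcal{P}_N}$, and these commutators vanish on Fourier multipliers. The only remaining term is a projection of $(\partial_x\psi_N)w$, which is harmless.

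With the stability bound in hand, Duhamel's formula compares the two flows:
\[
e^{-\ic tA}-e^{-\ic tA_N}=-\ic\int_0^t e^{-\ic(t-s)A_N}(A-A_N)e^{-\ic sA}\,ds .
\]
Here $e^{-\ic(t-s)A_N}$ is bounded on $H^{2a}$ by Gronwall's inequality, since $\partial_x^2$ is skew on Sobolev norms and the perturbation is bounded. The consistency error $(W^{\mathrm{cor}}_k-W^{\mathrm{cor}}_{k,N})z(s)$ is $O(N^{-2(k-a)})$ in $H^{2a}$ by the previous estimates, provided $z(s)\in H^{2k}$. This requires a consistency bound for $W^{\mathrm{cor}}_k-W^{\mathrm{cor}}_{k,N}$ from $H^{2k}$ to $H^{2a}$, obtained from the bounds on $e^{\pm\mathfrak{E}}$, on $\mathcal{A}_N(W\cdot)$, and on $\Lambda_N$. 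Combining the three terms gives \eqref{eq:lapin}.
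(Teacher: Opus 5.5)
There is a genuine gap, at the step you yourself identify as the main obstacle, but the difficulty is not the one you address.

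\textbf{Stability of the discrete potential.} $W^{\mathrm{cor}}_{k,N}$ has $\Lambda_N$ in front, and $\Lambda_N$ is \emph{not} uniformly bounded on $\mathcal{P}_N$. It discards the grid values at $0$ and $1$, since oddness forces them to vanish. The grid-delta polynomials $\Psi_{y,N}$ from the proof of Lemma~\ref{lem:consis_lambda} have $H^2$-norm of order $N^{3/2}$. So one only has $\|\Lambda_N g\|_{H^2}\lesssim \|g\|_{H^2}+N^{3/2}(|g(0)|+|g(1)|)$, and this is sharp. For $a\ge 2$ it is worse: even if $g$ vanishes at the grid boundary, $\Lambda_N g=\mathcal{A}_N\Lambda g$ with $\Lambda g$ having a jump in $\partial_x^2$, so the values $\partial_x^{2j}g(y)$ enter with growing powers of $N$.

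The dealiased part $\psi_{2i,y,N}\diamond\mathcal{J}^{\mathrm{main}}_{1,y}$ of $\mathfrak{E}_{k,N}$ does not preserve vanishing at the grid points $0,1$, because $\Pi_{\mathcal{P}_N}$ moves point values. Hence both $e^{-\mathfrak{E}_{k,N}}w^{(0)}_N$ and $e^{-\mathfrak{E}_{k,N}}(\partial_x^2+\mathcal{A}_N(W\cdot))e^{\mathfrak{E}_{k,N}}u$ have small but nonzero boundary values. Your identity $\Lambda_N\mathcal{A}_N=\mathcal{A}_N\Lambda$ does not apply to them. A uniform bound on $[\partial_x^2,\mathfrak{E}_{k,N}]$ is the discrete analogue of only the \emph{first} half of the proof of Theorem~\ref{thm:main}, and it says nothing about these boundary values.

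What is missing is a quantitative discrete version of the second half of Theorem~\ref{thm:main}, namely approximate preservation of the Dirichlet conditions. The paper obtains it in three steps:
\begin{itemize}
\item It measures everything in $\|u\|_{H^2_N}=\|u\|_{H^2}+N^{3/2}(|u(0)|+|u(1)|)$ (Lemmas~\ref{lem:consis_lambda}, \ref{lem:diam}, \ref{lem:consis_E}, \ref{lem:consis_exp_E}, \ref{lem:consis_crochet} and Proposition~\ref{prop:consis_W}).
\item It proves \eqref{eq:lapin} only for $a=1$ and lifts to $H^{2a}$ with the inverse inequality $\|\Pi_{\mathcal{P}_N}v-v_N\|_{H^{2a}}\le N^{2(a-1)}\|\Pi_{\mathcal{P}_N}v-v_N\|_{H^2}$.
\item It deduces $H^{2a}$-stability of $W^{\mathrm{cor}}_{k,N}$ on odd trigonometric polynomials (Corollary~\ref{cor:bound_W_N}) from $H^2_N$-consistency against $W^{\mathrm{cor}}_k$, which preserves the boundary conditions exactly. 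It does not use a commutator bound for this.
\end{itemize}

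\textbf{Discrete operators acting off $\mathcal{P}_N$.} Your decomposition applies $W^{\mathrm{cor}}_{k,N}$ and its flow to $z(s)\notin\mathcal{P}_N$, where the perturbation is not bounded. Extending $\Lambda_N$ through grid values, for odd $z\in H^{2k}$ one has $\Lambda_N\partial_x^2 z=\mathcal{A}_N\partial_x^2 z$. So $W^{\mathrm{cor}}_{k,N}z$ contains $(\mathcal{A}_N-\mathrm{Id})\partial_x^2 z$. Consequently $(W^{\mathrm{cor}}_k-W^{\mathrm{cor}}_{k,N})z(s)$ is in general only $O(N^{-2(k-a)+2})$ in $H^{2a}$, and is not even bounded when $a=k$. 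Your Gr\"onwall stability claim for $e^{-\ic(t-s)(\partial_x^2+W^{\mathrm{cor}}_{k,N})}$ also fails off $\mathcal{P}_N$.

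Discrete operators must act only on trigonometric polynomials. All the paper's estimates have the form $\|\mathrm{Op}_N v_N-\mathrm{Op}\,v\|\lesssim\|v-v_N\|+N^{-2k+2}\|v\|_{H^{2k}}$ with $v_N\in\mathcal{P}_N$. Lemma~\ref{lem:consis_flow} writes Duhamel's formula around the free flow $e^{-\ic t\partial_x^2}$, an $H^2$-isometry, so no stability of the perturbed discrete flow is needed. Since both flows preserve odd functions, the $H^2_N$-norm of their difference equals its $H^2$-norm and Gr\"onwall closes. Your first term, $(e^{\mathfrak{E}_k}-e^{\mathfrak{E}_{k,N}})z(t)$, is fine as you describe.
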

Note that due to Proposition \ref{prop:period_ext}, $v(t)$ is a periodic extension to $\exp(-\ic t (\partial_x^2+V))u^{(0)}$.
The proof of this proposition requires many technical lemmas, thus we postpone it to Section \ref{sec:errorest}. 


\section{Convergence analysis in a fully discretized setting} \label{sec:convanal}
\sectionmark{\footnotesize Convergence analysis in a fully discretized setting}
\subsection{Consistency and stability} In this subsection, we prove consistency and stability estimates for the discretized operators. Motivated by Lemma \ref{lem:consis_lambda} below, we introduce the following ad-hoc norm
$$
\forall u \in H^2, \quad \| u \|_{H^2_N} =  \| u\|_{H^2} + N^{3/2} (|u(0)| + |u(1)|).
$$
In a first part, we state and prove boundedness for some operators we introduced in the previous section. We recall the following standard result, which is a consequence of the Poisson formula, see e.g. \cite[Lemma 3.9]{Abou:2025:ACO}.
\begin{lemma} \label{lem:bound_A} For all $s>1/2$, all $N\geq 1$ and all $f \in H^s$, we have $\| \mathcal{A}_N f \|_{H^s} \lesssim_s \| f\|_{H^s}$.
\end{lemma}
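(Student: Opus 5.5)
We prove the bound $\|\mathcal{A}_N f\|_{H^s}\lesssim_s \|f\|_{H^s}$ via the Fourier-side description of the interpolation operator, namely the Poisson summation (aliasing) formula. Write $f=\sum_{m\in\mathbb{Z}} \hat f_m e^{\ic\pi m\cdot}$ on $\mathbb{T}=\mathbb{R}/2\mathbb{Z}$. The grid $\mathbb{T}_N$ has $2N+2$ points with spacing $\Delta x=1/(N+1)$. On it, the exponentials $e^{\ic\pi m x}$ and $e^{\ic\pi m' x}$ coincide exactly when $m\equiv m' \pmod{2N+2}$. Since $\mathcal{P}_N$ is spanned by the $2N+2$ modes $-N\le k\le N+1$, one representative per residue class, $\mathcal{F}_N$ is well defined.

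The first step is the aliasing identity
$$
\widehat{(\mathcal{A}_N f)}_k=\sum_{q\in\mathbb{Z}} \hat f_{k+q(2N+2)}, \qquad -N\le k\le N+1.
$$
It holds as soon as the Fourier series of $f$ converges absolutely, because then sampling at grid points commutes with summation. Absolute convergence is guaranteed for $s>1/2$: Cauchy--Schwarz gives $\sum_m|\hat f_m|\lesssim_s\|f\|_{H^s}$, so $f$ is continuous and the series converges pointwise.

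The second step is to estimate $\langle k\rangle^{s}|\widehat{(\mathcal{A}_N f)}_k|$ for each $k$. Apply Cauchy--Schwarz to the sum over $q$:
$$
\Big|\sum_q \hat f_{k+q(2N+2)}\Big|^2\le \Big(\sum_q \langle k+q(2N+2)\rangle^{-2s}\Big)\Big(\sum_q \langle k+q(2N+2)\rangle^{2s}|\hat f_{k+q(2N+2)}|^2\Big).
$$
Here $|k|\le N+1$. For $q\neq 0$ this gives $|k+q(2N+2)|\ge (2|q|-1)(N+1)\gtrsim |q|\,\langle k\rangle$. The $q=0$ term is $\langle k\rangle^{-2s}$. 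Hence the first factor is at most
$$
C_s\,\langle k\rangle^{-2s}\Big(1+\sum_{q\ne0}|q|^{-2s}\Big)\lesssim_s \langle k\rangle^{-2s},
$$
and the $q$-series converges precisely because $2s>1$.

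The final step is to multiply by $\langle k\rangle^{2s}$ and sum over $k\in\{-N,\dots,N+1\}$. Every $m\in\mathbb{Z}$ is written as $k+q(2N+2)$ exactly once, so
$$
\|\mathcal{A}_N f\|_{H^s}^2\lesssim_s \sum_m\langle m\rangle^{2s}|\hat f_m|^2\simeq\|f\|_{H^s}^2,
$$
with constant independent of $N$.

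The only delicate point is the uniform-in-$N$ lower bound $\langle k+q(2N+2)\rangle\gtrsim |q|\langle k\rangle$. It relies on choosing the representatives $k$ in the centered window $[-N,N+1]$, which the definition of $\mathcal{P}_N$ provides. This is also exactly where the condition $s>1/2$ is needed, for the convergence of $\sum_{q\neq0}|q|^{-2s}$.
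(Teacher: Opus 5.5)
Your proof is correct and takes essentially the same approach as the paper. The paper gives no proof of its own; it cites the lemma as a standard consequence of the Poisson formula. Your argument is exactly that standard proof: the aliasing identity $\widehat{(\mathcal{A}_N f)}_k=\sum_{q}\hat f_{k+q(2N+2)}$, then Cauchy--Schwarz with the $N$-uniform bound $\langle k+q(2N+2)\rangle\gtrsim |q|\langle k\rangle$ for $-N\le k\le N+1$, which is where $s>1/2$ enters.
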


As a corollary, we deduce the following basic lemma.
\begin{lemma}
\label{lem:consis_AN}
For all $s_1\geq s_2>1/2$, for all $u\in H^{s_1}$ and all $N\geq 1$, we have
$$
\| u - \mathcal{A}_N u\|_{H^{s_2}} \lesssim_{s_2} N^{-(s_1-s_2)} \| u\|_{H^{s_1}}.
$$
\end{lemma}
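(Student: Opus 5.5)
The natural route is to insert the projection $\Pi_{\mathcal{P}_N}$ and split the error as
\[
u - \mathcal{A}_N u = (u - \Pi_{\mathcal{P}_N} u) + (\Pi_{\mathcal{P}_N} u - \mathcal{A}_N u).
\]
We then bound each piece separately in $H^{s_2}$, using Lemma~\ref{lem:bound_A} for the second one. Throughout, $\mathcal{P}_N$ contains the Fourier modes $e^{\ic\pi k\cdot}$ with $-N\le k\le N+1$, so every mode removed by $\Pi_{\mathcal{P}_N}$ has $|k|\ge N+1$.

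For the first term we use a direct Fourier computation. On frequencies $|k|>N$ we have $\langle \pi k\rangle^{2s_2} \le C N^{-2(s_1-s_2)}\langle \pi k\rangle^{2s_1}$. Summing over these modes gives
\[
\|u-\Pi_{\mathcal{P}_N}u\|_{H^{s_2}} \lesssim N^{-(s_1-s_2)}\|u\|_{H^{s_1}}.
\]
The same computation gives $\|u-\Pi_{\mathcal{P}_N}u\|_{H^{s_1}}\le \|u\|_{H^{s_1}}$.

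For the second term we need two facts.
\begin{itemize}
\item Since $\mathcal{A}_N$ is interpolation on the grid $\mathbb{T}_N$, which has $2N+2 = \dim \mathcal{P}_N$ points, $\mathcal{A}_N$ is the identity on $\mathcal{P}_N$. Hence $\mathcal{A}_N\Pi_{\mathcal{P}_N}u = \Pi_{\mathcal{P}_N}u$, and therefore
\[
\Pi_{\mathcal{P}_N}u - \mathcal{A}_N u = -\mathcal{A}_N(u-\Pi_{\mathcal{P}_N}u).
\]
\item Because $\mathcal{A}_N$ takes values in $\mathcal{P}_N$, the inverse inequality on $\mathcal{P}_N$ (frequencies $|k|\le N+1$) gives $\|w\|_{H^{s_2}}\lesssim N^{-(s_1-s_2)}\|w\|_{H^{s_1}}$ is \emph{false} in that direction. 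What we actually need is the opposite: the $H^{s_2}$-norm of $\mathcal{A}_N$ applied to a high-frequency function must be small.
\end{itemize}

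To get this, we apply Lemma~\ref{lem:bound_A} at regularity $s_2>1/2$ to $f=u-\Pi_{\mathcal{P}_N}u$. This yields
\[
\|\mathcal{A}_N f\|_{H^{s_2}}\lesssim_{s_2}\|f\|_{H^{s_2}}\lesssim N^{-(s_1-s_2)}\|u\|_{H^{s_1}},
\]
where the last step is the first-term estimate. Note that this requires no stronger hypothesis than $s_2>1/2$, and the constant depends only on $s_2$, as claimed. Combining the two pieces by the triangle inequality gives the statement.

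The main obstacle is the second term. If one naively applies $H^{s_1}$-boundedness of $\mathcal{A}_N$ and then tries to lower the regularity to $s_2$, the estimate loses rather than gains powers of $N$. The fix is to apply Lemma~\ref{lem:bound_A} at the \emph{lower} regularity $s_2$ to the high-frequency remainder, so that all the decay comes from the projection error. The one point that must be checked is that $\mathcal{A}_N$ reproduces $\mathcal{P}_N$ exactly. This follows from the definition of $\mathcal{A}_N$ as $\mathcal{F}_N$ composed with restriction to the grid, where $\mathcal{F}_N$ is the inverse of the restriction map $\mathcal{F}_N^{-1}$ defined on $\mathcal{P}_N$.
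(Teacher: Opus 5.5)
Your argument is correct and is essentially the paper's proof. You insert $\Pi_{\mathcal{P}_N}$, use $\mathcal{A}_N\Pi_{\mathcal{P}_N}=\Pi_{\mathcal{P}_N}$, apply Lemma~\ref{lem:bound_A} at regularity $s_2$ to the remainder $u-\Pi_{\mathcal{P}_N}u$, and finish with the Fourier-tail estimate. The second bullet, the ``inverse inequality'' remark you then call false, plays no role and should be deleted.
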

\begin{proof} First, we note that $ \mathcal{A}_N \Pi_{\mathcal{P}_N} u = \Pi_{\mathcal{P}_N} u$.
Thus, by Lemma \ref{lem:bound_A}, we obtain
\begin{equation*}
\begin{split}
\| u - \mathcal{A}_N u\|_{H^{s_2}} &\leq \| u - \Pi_{\mathcal{P}_N} u \|_{H^{s_2}} + \| \mathcal{A}_N(u - \Pi_{\mathcal{P}_N} u) \|_{H^{s_2}} \\ &\lesssim_{s_2} \| u - \Pi_{\mathcal{P}_N} u \|_{H^{s_2}} \lesssim_{s_2} N^{-(s_1-s_2)}  \| u - \Pi_{\mathcal{P}_N} u \|_{H^{s_1}}  \lesssim_{s_2} N^{-(s_1-s_2)} \| u\|_{H^{s_1}},
\end{split}
\end{equation*}
which concludes the proof.
\end{proof}

We also recall the Plancherel identity in this context.
\begin{lemma} \label{lem:Planch} For all $u\in \mathcal{P}_N$ and all $N\geq 1$, we have
$$
\int_{\mathbb{T}} |u(x)|^2 \frac{\mathrm{d}x}2 = \| u\|_{L^2}^2 = \frac{\Delta x}2 \sum_{x\in \mathbb{T}_N} |u(x)|^2.
$$
\end{lemma}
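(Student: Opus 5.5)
The plan is to expand $u$ in the Fourier basis of $\mathcal{P}_N$ and check that both sides of the identity are the same Hermitian form in the Fourier coefficients, namely $\sum_k |u_k|^2$. The key point is that the modes $e^{\ic\pi k\cdot}$, $-N\le k\le N+1$, are orthonormal both for the continuous inner product $\int_{\mathbb{T}} \cdot\,\overline{\cdot}\,\frac{\mathrm{d}x}{2}$ and for the discrete inner product $\frac{\Delta x}{2}\sum_{x\in\mathbb{T}_N}\cdot\,\overline{\cdot}$.

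\emph{Continuous side.} Write $u=\sum_{k=-N}^{N+1} u_k e^{\ic\pi k x}$. Since $\mathbb{T}=\mathbb{R}/2\mathbb{Z}$ has length $2$,
$$\int_{\mathbb{T}} e^{\ic\pi (k-k')x}\,\frac{\mathrm{d}x}{2}=\mathds{1}_{k=k'}.$$
Hence the first integral equals $\sum_k |u_k|^2$, which is by definition $\|u\|_{L^2}^2$ with the normalization used in the paper.

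\emph{Discrete side.} The grid points are $x_m=m\Delta x$ for $m=0,\dots,2N+1$, with $\Delta x=1/(N+1)$. Expanding $|u(x_m)|^2$ as a double sum over $k,k'$ reduces the problem to the geometric sums
$$\sum_{m=0}^{2N+1} e^{\ic\pi(k-k')m/(N+1)}=\sum_{m=0}^{2N+1} \omega^m,\qquad \omega=e^{2\pi\ic (k-k')/(2N+2)}.$$
Such a sum equals $2N+2$ if $\omega=1$ and vanishes otherwise, because then $\omega^{2N+2}=1$ with $\omega\neq1$.

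The only step requiring care is excluding aliasing. For $k,k'\in\{-N,\dots,N+1\}$ we have $|k-k'|\le 2N+1<2N+2$. Therefore $\omega=1$ forces $k=k'$, since $(k-k')/(2N+2)$ must be an integer and $|k-k'|<2N+2$. This is exactly why $\mathcal{P}_N$ has $2N+2$ modes, matching the $2N+2$ grid points.

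It follows that
$$\frac{\Delta x}{2}\sum_{x\in\mathbb{T}_N}|u(x)|^2=\frac{\Delta x}{2}(2N+2)\sum_k|u_k|^2=\sum_k |u_k|^2,$$
because $\frac{\Delta x}{2}(2N+2)=1$. This matches the continuous side and concludes the proof.
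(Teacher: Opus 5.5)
Your proof is correct: the $2N+2$ exponentials $e^{\ic\pi k\cdot}$, $-N\le k\le N+1$, are orthonormal both for the normalized continuous inner product and for the discrete one weighted by $\frac{\Delta x}{2}$. Aliasing is ruled out because $|k-k'|\le 2N+1<2N+2$, and this is exactly the standard argument. The paper does not prove this lemma; it only recalls it as the classical discrete Plancherel identity, so your argument supplies the justification the paper omits.
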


Then, we study the operator $\Lambda_N$.
\begin{lemma} \label{lem:alg_Lambda} For all $N\geq 1$ and all $u\in \mathcal{P}_N$, $\Lambda_N u \in H^\infty_{odd}(\mathbb{T})$ is an odd function. Moreover, if $u(0)=u(1)=0$, we have  $\Lambda_N u = \mathcal{A}_N \Lambda u$.
\end{lemma}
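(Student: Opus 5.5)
The plan is to work at the level of grid values, since $\mathcal{F}_N$ identifies $\mathcal{P}_N$ with $\mathbb{C}^{\mathbb{T}_N}$. The map $w\mapsto w_{|\mathbb{T}_N}$ is a bijection from $\mathcal{P}_N$ onto $\mathbb{C}^{\mathbb{T}_N}$, because $\dim \mathcal{P}_N = 2N+2 = \#\mathbb{T}_N$. Hence every element of $\mathcal{P}_N$ is a trigonometric polynomial, and in particular lies in $H^\infty(\mathbb{T})$. So $\Lambda_N u\in\mathcal{P}_N\subset H^\infty(\mathbb{T})$, and the real content of the first claim is oddness.

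For oddness, set $v$ the grid function from the definition of $\Lambda_N$. Then $v(-x)=-v(x)$ for all $x\in\mathbb{T}_N$. This forces $v(0)=0$, and it also forces $v(1)=0$, since $-1\equiv 1$ in $\mathbb{T}=\mathbb{R}/2\mathbb{Z}$, i.e.\ the index $N+1$ is fixed by $x\mapsto -x$ modulo $2N+2$. Let $w=\mathcal{F}_N v\in\mathcal{P}_N$ and $\tilde w(x)=-w(-x)$.

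The one subtle point is that $\mathcal{P}_N$ is spanned by the frequencies $-N,\dots,N+1$, which is not symmetric under $k\mapsto -k$. So $\tilde w$ need not a priori lie in $\mathcal{P}_N$. I would handle this by noting that on the grid $e^{\ic\pi(N+1)x}=e^{-\ic\pi(N+1)x}$, since $(N+1)\Delta x=1$. The Fourier coefficient of $w$ at frequency $N+1$ is proportional to $\sum_x v(x)(-1)^{x/\Delta x}$, which vanishes by oddness of $v$ (the terms at $x$ and $-x$ cancel, and $v(0)=v(1)=0$). Hence $w$ only involves frequencies $-N,\dots,N$, and the same holds for $\tilde w$. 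Then $\tilde w$ and $w$ both lie in this symmetric subspace and agree on $\mathbb{T}_N$, because $\tilde w(x)=-v(-x)=v(x)$. By injectivity of the grid restriction they coincide, so $w=\Lambda_N u$ is odd. This coefficient computation is the main thing to check carefully.

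For the second claim, assume $u(0)=u(1)=0$. Here $\Lambda u$ is defined pointwise ($\mathcal{E}\mathcal{R}u$), and it is continuous because $u(0)=0$ and $u(1)=0$ make the odd extension match at $0$ and at $\pm1$. So $\mathcal{A}_N\Lambda u$ is well defined and equals the unique element of $\mathcal{P}_N$ whose grid values are $(\Lambda u)_{|\mathbb{T}_N}$. At $x=j\Delta x$ with $1\le j\le N$ we have $\Lambda u(x)=u(x)=v(x)$. At $-x$ we get $\Lambda u(-x)=-u(x)=v(-x)$. At $0$ and $1$ both $\Lambda u$ and $v$ vanish, using the hypothesis together with the oddness of $v$ from above. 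So $(\Lambda u)_{|\mathbb{T}_N}=v$, and applying $\mathcal{F}_N$ gives $\Lambda_N u=\mathcal{A}_N\Lambda u$.
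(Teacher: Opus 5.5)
Your proposal is correct and follows essentially the paper's argument. Like the paper, you read off the discrete Fourier coefficients of $\Lambda_N u$ from the grid oddness of $v$. The paper writes the result directly as the sine series $\sum_{k=1}^N 2\ic f_k \sin(k\pi\cdot)$. You instead show that the $(N+1)$-mode vanishes and then invoke uniqueness of grid interpolation, which makes explicit a point the paper leaves implicit. The second claim is proved the same way in both, by matching the grid values of $\Lambda u$ with $v$.
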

\begin{proof} Let $f = \Lambda_N u$ and let
$$
f = \sum_{k=-N}^{N+1} f_k e^{\ic\pi  k \cdot}
$$
be its Fourier decomposition. Since
$$
f_k = \frac{\Delta x}2 \sum_{x\in \mathbb{T}_N} f(x) e^{-\ic \pi k x}
$$
and $f(x)=-f(-x)$ for all $x\in \mathbb{T}_N$ by definition of $\Lambda$, there holds that
$$
f =  \sum_{k=1}^{N} 2\ic f_k \sin(k\pi \cdot),
$$
and therefore, $f$ is an odd function on $\mathbb{T}$.

Moreover, we note that if $u$ vanishes at the boundary, $ \Lambda u \in H^2$, and so it makes sense to consider $\mathcal{A}_N \Lambda u$. Thus, it suffices to note that $\mathcal{A}_N \Lambda u \in \mathcal{P}_N$ is an odd function on $\mathbb{T}_N$, which is equal to $u$ on $\Delta x \{1,\ldots,N\}$.
\end{proof}

\begin{lemma} \label{lem:consis_lambda} Let $N\geq 1$, $v\in H^{2k}(\mathbb{T})$ and $v_N \in \mathcal{P}_N$ be such that $\mathcal{R} v\in H^{2k}_{\mathrm{Dir}}$. Then, we have
$$
\| \Lambda v - \Lambda_N v_N \|_{H^2} \lesssim \| v-v_N \|_{H^2_N} + N^{-2k+2} \| v\|_{H^{2k}}.
$$
and, a fortiori,
$$
\| \Lambda_N v_N \|_{H^2} \lesssim \| v_N\|_{H^2_N}.
$$
\end{lemma}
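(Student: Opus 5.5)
We split the difference through the intermediate object $\mathcal{A}_N \Lambda v$:
\[
\Lambda v - \Lambda_N v_N = \big(\Lambda v - \mathcal{A}_N \Lambda v\big) + \big(\mathcal{A}_N \Lambda v - \Lambda_N v_N\big).
\]
Since $\mathcal{R}v \in H^{2k}_{\mathrm{Dir}}$, Lemma~\ref{lem:bound_lambda_per} gives $\Lambda v \in H^{2k}(\mathbb{T})$ with $\|\Lambda v\|_{H^{2k}} \lesssim \|v\|_{H^{2k}}$. Lemma~\ref{lem:consis_AN} with $s_1=2k$ and $s_2=2$ then bounds the first term by $N^{-2k+2}\|v\|_{H^{2k}}$.

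For the second term, note that $\Lambda_N$ only uses the grid values on $\Delta x\{1,\ldots,N\}$. Hence $\mathcal{A}_N\Lambda v = \Lambda_N v$ as elements of $\mathcal{P}_N$: both are the trigonometric interpolant of the odd grid function equal to $v$ on $\Delta x\{1,\dots,N\}$. This is the same argument as in Lemma~\ref{lem:alg_Lambda}, and it uses that $\Lambda v$ vanishes at $0$ and $1$, which are grid points. So the second term is $\Lambda_N(v - v_N)$, where $\Lambda_N$ is applied to the grid values of a continuous function. It therefore suffices to prove the stability estimate
\[
\|\Lambda_N w\|_{H^2} \lesssim \|w\|_{H^2_N}
\]
for all $w \in H^2(\mathbb{T})$, not only for $w\in\mathcal{P}_N$. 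This estimate also gives the second claim, by taking $w=v_N$.

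To prove the stability estimate, write $w = \chi + r$. The function $\chi = w(0)\chi_0 + w(1)\chi_1$ uses fixed smooth cutoffs $\chi_y$ with $\chi_y(y)=1$, $\chi_y(1-y)=0$ and $\chi_y$ supported near $y$. The remainder $r$ then vanishes at $0$ and $1$ and satisfies $\|r\|_{H^2} \lesssim \|w\|_{H^2}$. By the first step, $\Lambda_N r = \mathcal{A}_N \Lambda r$. Lemma~\ref{lem:bound_A} (with $s=2$) and Lemma~\ref{lem:bound_lambda_per} then give $\|\Lambda_N r\|_{H^2} \lesssim \|r\|_{H^2}$.

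It remains to bound $\|\Lambda_N \chi_y\|_{H^2} \lesssim N^{3/2}$; this is the main obstacle. Here $\Lambda_N\chi_y$ interpolates an odd grid function with a jump of size about $1$ at $x=y$. Writing $\Lambda_N\chi_y = \mathcal{A}_N g_y$, where $g_y = \Lambda \chi_y$ is odd, piecewise smooth and jumps at $y$, the Fourier coefficients of $g_y$ decay like $1/|k|$. The same holds for its interpolant, by the Poisson summation/aliasing formula: aliasing adds terms $\sum_m 1/|k+2m(N+1)|$, which stay $\mathcal{O}(1/|k|)$ up to a logarithm, and the logarithm can be avoided by pairing the $\pm m$ terms. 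The $H^2$ norm is then bounded by
\[
\Big(\sum_{|k|\le N+1} k^4 k^{-2}\Big)^{1/2} \lesssim N^{3/2}.
\]
Alternatively, one can exploit that the sine coefficients are explicit, since $\Lambda_N\chi_y$ is a discrete sine series. Plancherel (Lemma~\ref{lem:Planch}) together with the bound $|k|^2$ on the $H^2$ weights gives
\[
\|\Lambda_N\chi_y\|_{H^2} \lesssim N^2 \|\Lambda_N \chi_y\|_{L^2}.
\]
This crude bound would only yield $N^2$, so the sharper Fourier-decay argument is needed to reach $N^{3/2}$. Combining the three pieces gives $\|\Lambda_N w\|_{H^2} \lesssim \|w\|_{H^2} + N^{3/2}(|w(0)|+|w(1)|)$, which concludes the proof.
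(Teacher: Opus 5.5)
Your argument is correct, and its skeleton matches the paper's. You split through $\mathcal{A}_N\Lambda v$, handle $\Lambda v-\mathcal{A}_N\Lambda v$ by Lemma~\ref{lem:consis_AN}, and reduce the rest to an $H^2_N$-stability bound for $\Lambda_N$. The difference is how the boundary values are removed, and that choice is what creates your ``main obstacle.''

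You subtract smooth cutoffs $w(y)\chi_y$. These are not annihilated by $\Lambda_N$, so you must bound $\|\Lambda_N\chi_y\|_{H^2}$, the interpolant of a jump function, and this forces the aliasing analysis.

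The paper instead subtracts the discrete deltas $\Psi_{y,N}\in\mathcal{P}_N$, defined by $\Psi_{y,N}(x)=\mathds{1}_{x=y}$ on $\mathbb{T}_N$. Since $\Lambda_N$ only reads values on $\Delta x\{1,\dots,N\}$, one has $\Lambda_N\Psi_{y,N}=0$. So $w_N=v_N-v_N(0)\Psi_{0,N}-v_N(1)\Psi_{1,N}$ satisfies $\Lambda_N v_N=\Lambda_N w_N=\mathcal{A}_N\Lambda w_N$ exactly. The only cost is $\|\Psi_{y,N}\|_{H^2}\lesssim N^2\|\Psi_{y,N}\|_{L^2}\sim N^{3/2}$. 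The crude inverse inequality suffices here because Plancherel gives $\|\Psi_{y,N}\|_{L^2}=\sqrt{\Delta x/2}$ instead of $\mathcal{O}(1)$.

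The same trick would shortcut your route as well: $\Lambda_N\chi_y=\Lambda_N(\chi_y-\Psi_{y,N})=\mathcal{A}_N\Lambda(\chi_y-\Psi_{y,N})$, whose $H^2$ norm is $\lesssim 1+N^{3/2}$. The one thing your route adds is stability of $\Lambda_N$ on all of $H^2(\mathbb{T})$. That follows equally well by applying the paper's decomposition to $w-w(0)\Psi_{0,N}-w(1)\Psi_{1,N}$.

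If you keep your Fourier argument, the sketch needs two fixes.
\begin{itemize}
\item The bound $|\hat g_y(k)|\lesssim 1/|k|$ alone does not control aliasing. The sum $\sum_m 1/|k+2m(N+1)|$ diverges; it is not merely a logarithmic loss. At a jump, the aliasing formula holds only as a symmetric, conditionally convergent sum. Pairing the $\pm m$ terms requires the signed leading term $\hat g_y(k)=c_y e^{-\ic\pi k y}/k+\mathcal{O}(k^{-2})$. Its phase is invariant under $k\mapsto k+2m(N+1)$ for $y\in\{0,1\}$, which is what makes the pairing work.
\item You must set $g_y(y)=0$ at the jump, rather than the value $\chi_y(y)$ that $\mathcal{E}$ assigns. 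Otherwise $\Lambda_N\chi_y=\mathcal{A}_N g_y$ fails.
\end{itemize}
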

\begin{proof} Let $\Psi_{0,N},\Psi_{1,N} \in \mathcal{P}_N$ be such that
$$
\forall y\in \{0,1\}, \forall x\in \mathbb{T}_N, \quad \Psi_{y,N}(x) = \mathds{1}_{x=y}.
$$
We note that by Lemma~\ref{lem:Planch}, there holds
$$
\|  \Psi_{0,N} \|_{L^2} = \|  \Psi_{1,N} \|_{L^2} = \sqrt{\frac{\Delta x}{2}}.
$$
Thus, for all $y\in \{0,1\}$, we obtain
$$
\|  \Psi_{y,N} \|_{H^2} \leq N^2 \|  \Psi_{y,N} \|_{L^2} \lesssim N^2\sqrt{ \Delta x} \sim N^{3/2}.
$$
Then, we set 
$$
w_N = v_N  - v_N(0) \Psi_{0,N} - v_N(1) \Psi_{1,N}.
$$
By construction and Lemma \ref{lem:alg_Lambda}, we have
$$
\Lambda_N v_N - \Lambda v = \Lambda_N w_N - \Lambda v = \mathcal{A}_N\Lambda (w_N-v) + \mathcal{A}_N \Lambda v- \Lambda v.
$$ 
On the one hand, since $\mathcal{R} v\in H^{2k}_{\mathrm{Dir}}$, we obtain by Lemma \ref{lem:consis_AN},
$$
\| \mathcal{A}_N \Lambda v- \Lambda v \|_{H^2} \lesssim N^{-2k+2} \| \Lambda v \|_{H^{2k}} \lesssim N^{-2k+2} \| v \|_{H^{2k}}.
$$
On the other hand, by Lemma \ref{lem:bound_A}, since $w_N(0) = w_N(1)=0$, we deduce
$$
\| \mathcal{A}_N\Lambda (w_N-v)  \|_{H^2} \lesssim \| \Lambda (w_N-v)  \|_{H^2} \lesssim \| w_N-v  \|_{H^2}.
$$
Since $\| \Psi_{0,N} \|_{H^2} + \| \Psi_{1,N} \|_{H^2} \lesssim N^{3/2}$ and $v$ vanishes at the boundary, we conclude that
$$
 \| w_N-v  \|_{H^2} \lesssim  \| v-v_N \|_{H^2_N}
$$
holds true.
\end{proof}

Furthermore, we prove a continuity estimate for the desaliased product.
\begin{lemma} \label{lem:diam} For all $N\geq 1$, $s>1/2$, $u,v \in \mathcal{P}_N$ and $y\in \{0,1\}$, we have
$$
\| u \diamond v\|_{H^s} \lesssim_s \| u\|_{H^s} \| v \|_{H^s} \quad \mathrm{and} \quad  |(u \diamond v)(y) | \lesssim N^{-3/2} \| u\|_{H^2} (\| v\|_{H^2} + N^{3/2} |v(y)|),
$$
and
\begin{equation} \label{eq:uvdes}
 \| u \diamond v\|_{H^2_N} \lesssim  \| u\|_{H^2} \| v \|_{H^2_N}.
\end{equation}
\end{lemma}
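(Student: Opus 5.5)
The plan is to prove the three estimates of Lemma~\ref{lem:diam} in order, using only that $u\diamond v=\Pi_{\mathcal{P}_N}(uv)$ and that $uv\in\mathcal{P}_{2N+1}$.

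\emph{First estimate.} Since $\Pi_{\mathcal{P}_N}$ is a Fourier multiplier with symbol bounded by one, it is a contraction on every $H^s(\mathbb{T})$. Hence $\|u\diamond v\|_{H^s}\le \|uv\|_{H^s}\lesssim_s \|u\|_{H^s}\|v\|_{H^s}$, where the last step is the algebra property of $H^s(\mathbb{T})$ for $s>1/2$.

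\emph{Pointwise estimate.} The idea is to write $u\diamond v = uv - (\mathrm{Id}-\Pi_{\mathcal{P}_N})(uv)$ and bound each piece at $y$. For the first piece, $|u(y)v(y)|\le \|u\|_{L^\infty}|v(y)|\lesssim \|u\|_{H^2}|v(y)|$, which is the term $N^{-3/2}\|u\|_{H^2}\,N^{3/2}|v(y)|$. For the second piece, set $w=(\mathrm{Id}-\Pi_{\mathcal{P}_N})(uv)$. Its Fourier coefficients are supported on $|k|\gtrsim N$, so by Cauchy--Schwarz
$$
|w(y)|\le \sum_{|k|>N}|\widehat{w}_k|\le \Big(\sum_{|k|>N}\langle k\rangle^{-4}\Big)^{1/2}\|w\|_{H^2}\lesssim N^{-3/2}\|uv\|_{H^2}.
$$
Finally $\|uv\|_{H^2}\lesssim\|u\|_{H^2}\|v\|_{H^2}$ by the algebra property. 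Adding the two pieces gives the claimed bound on $|(u\diamond v)(y)|$.

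\emph{Estimate \eqref{eq:uvdes}.} This follows by combining the previous two. With $s=2$, the first estimate gives $\|u\diamond v\|_{H^2}\lesssim \|u\|_{H^2}\|v\|_{H^2}$. For the boundary part, $N^{3/2}|(u\diamond v)(y)|\lesssim \|u\|_{H^2}(\|v\|_{H^2}+N^{3/2}|v(y)|)$ for each $y\in\{0,1\}$. Summing these and recalling the definition of $\|\cdot\|_{H^2_N}$ yields \eqref{eq:uvdes}.

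The main point to get right is the tail estimate in the pointwise bound, namely that the sum over $|k|>N$ decays as $N^{-3/2}$. The exact frequency cutoff is asymmetric ($-N,\dots,N+1$), but this only affects constants.
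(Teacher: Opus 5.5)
Your proof is correct and follows essentially the paper's argument. The $H^s$ bound comes from the contraction property of $\Pi_{\mathcal{P}_N}$ together with the algebra property, and the pointwise bound splits off the boundary value and controls the truncation tail $(\mathrm{Id}-\Pi_{\mathcal{P}_N})(\cdot)$ at $y$ by $N^{-3/2}\|\cdot\|_{H^2}$. The only difference is minor. The paper first subtracts the constant $v(0)$ and bounds the tail at the grid point through the discrete Plancherel identity on $\mathcal{P}_{2N+1}$, giving $N^{1/2}\cdot N^{-2}$. Your Cauchy--Schwarz estimate on the Fourier tail gives $N^{-3/2}$ directly, at any point and without the subtraction.
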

\begin{proof}
Since $\Pi_{\mathcal{P}_N}$ is an orthogonal projection, the $H^s$-bound follows directly,
$$
\| u \diamond v\|_{H^s} =  \|  \Pi_{\mathcal{P}_N} (u v)\|_{H^s} \leq \| u v \|_{H^s}   \lesssim \| u\|_{H^s} \| v \|_{H^s} .
$$
Then, we prove the bound for $(u \diamond v)(0)$ (the one in $y=1$ is similar). Setting $w = v-v(0)$,
we have that $uw \in \mathcal{P}_{2N+1}$. We use the $H^2$-bound to show the following estimate,
\begin{align*}
|(u \diamond v)(0) | \leq  |v(0)| |(u \diamond 1)(0) | + | (u \diamond w) (0)|  \lesssim  |v(0)| \| u\|_{H^2} + |(uw)(0) - (u \diamond w) (0)|. 
\end{align*}
Furthermore, by Lemma \ref{lem:Planch} we obtain that
\begin{align*}
|(uw)(0) - (u \diamond w) (0)| \lesssim  N^{1/2} \| uw - u \diamond w \|_{L^2} = N^{1/2} \| (\mathrm{Id} - \Pi_{\mathcal{P}_N})uw\|_{L^2} \leq N^{-3/2} \| uw\|_{H^2}.
\end{align*}
Finally, we deduce the desired estimate,
\begin{align*}
|(u \diamond v)(0) |  \lesssim N^{-3/2} \| u\|_{H^2} (\| v\|_{H^2} + N^{3/2} |v(0)|).
\end{align*}
Note that~\eqref{eq:uvdes} is then an immediate consequence of the definition of the norm $\norm{\, \cdot\,}_{H^2_N}$.
\end{proof}

In the following, we state and prove consistency and stability for the discretized correctors.

\begin{lemma} \label{lem:consis_E} For all $N\geq 1$, $u\in \mathcal{P}_N$, we have
$$
\| \mathfrak{E}_{k} u - \mathfrak{E}_{k,N} u \|_{H^2_N} \lesssim N^{-2k+2} \|u\|_{H^{2k}}.
$$
\end{lemma}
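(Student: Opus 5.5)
Subtracting the definitions of $\mathfrak{E}_k$ and $\mathfrak{E}_{k,N}$ leaves, for each $y\in\{0,1\}$ and $i\le k-1$, two kinds of terms. For $n\ge 2$ the difference is $\alpha_{i,n,y}(\mathrm{Id}-\mathcal{A}_N)(\psi_{2i+1-n,y}\mathcal{J}_{n,y}u)$. For $n=1$ we write $\mathcal{J}_{1,y}=\mathcal{J}^{\mathrm{main}}_{1,y}+\mathcal{J}^{\mathrm{bound}}_{1,y}$, so the difference is
$$
\alpha_{i,1,y}\Big[(\mathrm{Id}-\mathcal{A}_N)\big(\psi_{2i,y}\mathcal{J}^{\mathrm{bound}}_{1,y}u\big) + \psi_{2i,y}\mathcal{J}^{\mathrm{main}}_{1,y}u - \psi_{2i,y,N}\diamond \mathcal{J}^{\mathrm{main}}_{1,y}u\Big].
$$
The plan is to bound each piece in the norm $\|\cdot\|_{H^2_N}=\|\cdot\|_{H^2}+N^{3/2}(|\cdot(0)|+|\cdot(1)|)$ by $N^{-2k+2}\|u\|_{H^{2k}}$. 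The coefficients are finitely many fixed numbers, so it suffices to treat each term separately.

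\textbf{Interpolation terms.} For every $f$ of the form $\psi_{m,y}\mathcal{J}_{n,y}u$ with $n\ge 2$, or $\psi_{2i,y}\mathcal{J}^{\mathrm{bound}}_{1,y}u$, I will use Lemma~\ref{lem:consis_AN} with $s_1$ large. The $H^2$ part then costs $N^{-(s_1-2)}\|f\|_{H^{s_1}}$. Since $\mathcal{J}_{n,y}$ gains $n$ derivatives, and $\psi$ vanishes near $-1/2$ (so the jump of the iterated integral at $-1/2$ is killed, as in Lemma~\ref{lem:bound_E_per}), we get $\|f\|_{H^{2k+n-1}}\lesssim\|u\|_{H^{2k}}$ when $n\ge 2$. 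Taking $s_1=2k+1/2$ gives the $H^2$ bound $N^{-2k+3/2}$.

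For the boundary values I use that $x=0$ and $x=1$ are grid points of $\mathbb{T}_N$. Indeed $1=(N+1)\Delta x$, so $\mathcal{A}_N f(y)=f(y)$ exactly, and the $N^{3/2}$ part of the norm vanishes for these terms. The bound term $\mathcal{J}^{\mathrm{bound}}_{1,y}u$ is $(x-y)$ times the mean of $u$ plus a constant, both controlled by $\|u\|_{L^2}$; it is therefore smooth away from $-1/2$, so $\psi\mathcal{J}^{\mathrm{bound}}_{1,y}u$ lies in every $H^s$.

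\textbf{Desaliased product term (main obstacle).} Here $\mathcal{J}^{\mathrm{main}}_{1,y}u=:g\in\mathcal{P}_N$ only gains one derivative, and its boundary values must be controlled. I write
$$
\psi g-\psi_N\diamond g=(\mathrm{Id}-\Pi_{\mathcal{P}_N})(\psi g)+\Pi_{\mathcal{P}_N}\big((\psi-\psi_N)g\big),
$$
with $\psi_N=\mathcal{A}_N\psi$.

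The second piece is controlled in $H^2$ by $\|\psi-\psi_N\|_{H^2}\|g\|_{H^2}$. Since $\psi$ is smooth, Lemma~\ref{lem:consis_AN} makes this $O(N^{-M})$ for any $M$. Its boundary values are handled by the sampling estimate $|w(y)|\lesssim N^{1/2}\|w\|_{L^2}$, valid for $w\in\mathcal{P}_N$ by Lemma~\ref{lem:Planch}.

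For the first piece, $\psi g$ is a product of a smooth function with $g\in H^{2k+1}$, so its tail satisfies $\|(\mathrm{Id}-\Pi_{\mathcal{P}_N})(\psi g)\|_{H^2}\lesssim N^{-2k+1}\|u\|_{H^{2k}}$. The delicate part is the point value at $y$. I plan to mimic the proof of Lemma~\ref{lem:diam}: the tail of a function in $H^{s}$ has sup norm bounded by $N^{1/2-s}\|\cdot\|_{H^s}$. With $s=2k+1$ this gives a pointwise error $N^{-2k-1/2}$, and multiplying by $N^{3/2}$ yields $N^{-2k+1}$, which suffices. This step is where I expect difficulty in making the Sobolev and aliasing bookkeeping precise. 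Collecting all pieces gives the claimed $N^{-2k+2}$ bound.
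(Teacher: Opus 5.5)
Your proof is correct. For the $n\ge 2$ terms and the $\mathcal{J}^{\mathrm{bound}}_{1,y}$ term it is the paper's argument. The paper says these functions ``vanish at the boundary''; as you make explicit, this works because $0,1\in\mathbb{T}_N$, so $\mathcal{A}_N$ interpolates there and the $N^{3/2}$ part of the norm drops out.

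The difference is in the desaliased term, where you group the two error sources the other way round. The paper writes
\[
\psi g-\psi_N\diamond g=(\psi-\psi_N)\,g+(\mathrm{Id}-\Pi_{\mathcal{P}_N})(\psi_N g).
\]
The first piece vanishes at the nodes $0,1$, so only its $H^2$-norm matters. The second lies in $\mathcal{P}_{2N+1}$, so the inverse/Plancherel bound $\|\cdot\|_{H^2_N}\lesssim N^2\|\cdot\|_{L^2}$ reduces it to an $L^2$ Fourier-tail estimate. This is the same device used in Lemmas~\ref{lem:diam} and~\ref{lem:consis_crochet}.

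Your grouping truncates the exact product $\psi g$, which is not a trigonometric polynomial, so that inverse inequality is not available. You replace it by the Cauchy--Schwarz bound
\[
\sup|(\mathrm{Id}-\Pi_{\mathcal{P}_N})h|\le\sum_{|m|>N}|\hat h_m|\lesssim N^{1/2-s}\|h\|_{H^s}.
\]
You then pay for the fact that $\Pi_{\mathcal{P}_N}((\psi-\psi_N)g)$ does not vanish at the nodes with the sampling bound on $\mathcal{P}_N$. This costs nothing, since $\psi-\psi_N$ is $\mathcal{O}(N^{-M})$ in $H^2$ for every $M$.

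The step you flagged as delicate is fully rigorous as stated. No sampling or aliasing enters $(\mathrm{Id}-\Pi_{\mathcal{P}_N})(\psi g)$, and even $s=2k$ would suffice, giving $N^{3/2}N^{1/2-2k}=N^{-2k+2}$. Both routes are equally short. The paper's stays within the discrete Plancherel toolkit it reuses later, while yours is slightly more elementary for the point values.
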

\begin{proof} We have to expand $\mathfrak{E}_{k}$ and $\mathfrak{E}_{k,N}$. So let  $y\in \{0,1\}$, $1\leq i\leq k-1$ and $1\leq n\leq 2i+1$. First, we note that if $n>1$, then $ \psi_{2i+1-n,y} \mathcal{J}_{n,y} $ vanishes at the boundary. Therefore, by Lemma~\ref{lem:consis_AN}, there holds
\begin{multline*}
\| (\psi_{2i+1-n,y} \mathcal{J}_{n,y})u -  \mathcal{A}_N(\psi_{2i+1-n,y} \mathcal{J}_{n,y})u \|_{H^2_N} = \| (\psi_{2i+1-n,y} \mathcal{J}_{n,y})u -  \mathcal{A}_N(\psi_{2i+1-n,y} \mathcal{J}_{n,y})u \|_{H^2} \\ \lesssim N^{-2k+2} \| \psi_{2i+1-n,y} \mathcal{J}_{n,y}u \|_{H^{2k}} \lesssim N^{-2k+2}\| u\|_{H^{2k}}.
\end{multline*}
Then we focus on the case $n=1$, which is more delicate. First, since $\psi_{2i,y} \mathcal{J}_{1,y}^{\mathrm{bound}} u$ vanishes at the boundary, we have again by Lemma~\ref{lem:consis_AN},
$$
\| (\psi_{2i,y} \mathcal{J}_{ 1,y}^{\mathrm{bound}} u) -  \mathcal{A}_N(\psi_{2i,y} \mathcal{J}_{1,y}^{\mathrm{bound}}u) \|_{H^2_N} =\| (\mathrm{Id} - \mathcal{A}_N) (\psi_{2i,y} \mathcal{J}_{ 1,y}^{\mathrm{bound}} u)  \|_{H^2}   \lesssim N^{-2k+2}\| u\|_{H^{2k}}.
$$
Similarly, we get
$$
\| \psi_{2i,y,N}  \mathcal{J}_{1,y}^{\mathrm{main}}u  -  \psi_{2i,y}  \mathcal{J}_{1,y}^{\mathrm{main}}u \|_{H^2_N} = \| \psi_{2i,y,N}  \mathcal{J}_{1,y}^{\mathrm{main}}u  -  \psi_{2i,y}  \mathcal{J}_{1,y}^{\mathrm{main}}u \|_{H^2}   \leq  N^{-2k+2}\| u\|_{H^{2k}}.
$$
Thus, it only remains to control
$$
f=\psi_{2i,y,N} \diamond  \mathcal{J}_{1,y}^{\mathrm{main}}u  - \psi_{2i,y,N}  \mathcal{J}_{1,y}^{\mathrm{main}}u .
$$
Observing that $f\in \mathcal{P}_{2N+1}$ is a trigonometric polynomial of degree smaller or equal to $2N+2$, we deduce by Plancherel that 
$$
\| f \|_{H^2_N}  \lesssim \| f\|_{H^2} + N^2 \| f\|_{L^2} \lesssim  N^2  \| f\|_{L^2}.
$$
Finally, by definition of the desaliased product, we have 
\begin{equation*}
\begin{split}
\| f\|_{L^2} \leq&  \| (\mathrm{Id}-\Pi_{\mathcal{P}_N}) (\psi_{2i,y,N} \mathcal{J}_{1,y}^{\mathrm{main}}u)  \|_{L^2}  \lesssim N^{-2k}\| u\|_{H^{2k}},
\end{split}
\end{equation*}
which concludes the proof.
\end{proof}
Using that $\mathfrak{E}_{k}$ is bounded on $H^2$ and vanishes at the boundary, we deduce the following result as a corollary.
\begin{corollary} \label{lem:bound_E} For all $N\geq 1$ and all $u\in \mathcal{P}_N$, there holds $\|\mathfrak{E}_{k,N} u \|_{H^2_N} \lesssim  \| u\|_{H^2}$.
\end{corollary}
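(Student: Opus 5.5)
The plan is to write $\mathfrak{E}_{k,N} u = \mathfrak{E}_k u + (\mathfrak{E}_{k,N} u - \mathfrak{E}_k u)$ and bound the two pieces separately in the $H^2_N$ norm. One caveat first: Lemma~\ref{lem:consis_E} gives the difference in terms of $\|u\|_{H^{2k}}$, not $\|u\|_{H^2}$. We therefore cannot apply it verbatim. Instead we repeat its proof term by term, with the regularity index $2k$ replaced by $2$.

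\textbf{The continuous part.} For $\mathfrak{E}_k u$ we note that every summand $\psi_{2i+1-n,y}\mathcal{J}_{n,y}u$ vanishes at $y$, since $\mathcal{J}_{n,y}u(y)=0$. It also vanishes at the other endpoint $1-y$, because $\partial_x^j\psi_{m,y}(1-y)=0$ for all $j$ (in particular for $j=0$). Hence $\mathfrak{E}_k u(0)=\mathfrak{E}_k u(1)=0$, so the boundary part of the $H^2_N$ norm vanishes and
$$
\|\mathfrak{E}_k u\|_{H^2_N}=\|\mathfrak{E}_k u\|_{H^2}\lesssim \|u\|_{H^2}
$$
by Lemma~\ref{lem:bound_E_per}.

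\textbf{The terms with $n\ge 2$.} Here the functions $\psi_{2i+1-n,y}\mathcal{J}_{n,y}u$ are in $H^2$ with norm $\lesssim\|u\|_{H^2}$ and vanish at the boundary. Since $\mathcal{A}_N$ interpolates on the grid, which contains $0$ and $1$, we get $\mathcal{A}_N f(0)=f(0)=0$ and $\mathcal{A}_N f(1)=0$. The boundary contribution is therefore zero, and Lemma~\ref{lem:bound_A} gives $\|\mathcal{A}_N f\|_{H^2_N}\lesssim\|f\|_{H^2}\lesssim\|u\|_{H^2}$.

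\textbf{The term $n=1$, boundary part.} The piece $\mathcal{A}_N(\psi_{2i,y}\mathcal{J}_{1,y}^{\mathrm{bound}}u)$ is handled the same way. The explicit formula for $\mathcal{J}^{\mathrm{bound}}_{1,y}$ together with Sobolev embedding gives $\|\psi_{2i,y}\mathcal{J}^{\mathrm{bound}}_{1,y}u\|_{H^2}\lesssim\|u\|_{L^2}$. The function vanishes at $y$ and at $1-y$, and interpolation preserves these nodal values.

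\textbf{The term $n=1$, main part (main obstacle).} This is the desaliased product $\psi_{2i,y,N}\diamond\mathcal{J}^{\mathrm{main}}_{1,y}u$. We use \eqref{eq:uvdes} from Lemma~\ref{lem:diam} in the form
$$
\|\psi_{2i,y,N}\diamond\mathcal{J}^{\mathrm{main}}_{1,y}u\|_{H^2_N}\lesssim\|\mathcal{J}^{\mathrm{main}}_{1,y}u\|_{H^2}\,\|\psi_{2i,y,N}\|_{H^2_N}.
$$
This is legitimate because the product is symmetric, so the roles of the two factors may be exchanged. The first factor satisfies $\|\mathcal{J}^{\mathrm{main}}_{1,y}u\|_{H^2}\lesssim\|u\|_{H^1}\le\|u\|_{H^2}$, since on Fourier modes $\mathcal{J}^{\mathrm{main}}_{1,y}$ is multiplication by $(\ic\pi k)^{-1}$. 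For the second factor, $\psi_{2i,y}$ is smooth and vanishes at $0$ and $1$. By the interpolation property, $\psi_{2i,y,N}(0)=\psi_{2i,y,N}(1)=0$. Lemma~\ref{lem:bound_A} then gives $\|\psi_{2i,y,N}\|_{H^2_N}=\|\psi_{2i,y,N}\|_{H^2}\lesssim 1$.

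The delicate point is checking that the swapped form of \eqref{eq:uvdes} really holds. It reduces to the pointwise estimate in Lemma~\ref{lem:diam} applied with $v=\psi_{2i,y,N}$, for which $v(y)=0$. Summing the finitely many contributions over $y$, $i$ and $n$ yields $\|\mathfrak{E}_{k,N}u\|_{H^2_N}\lesssim\|u\|_{H^2}$.
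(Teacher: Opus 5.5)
Your argument is correct, but it takes a different route from the paper. Also, the obstruction you flag at the start is not real.

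For $u\in\mathcal{P}_N$ all Fourier modes satisfy $|m|\le N+1$, so the inverse inequality $\|u\|_{H^{2k}}\lesssim_k N^{2k-2}\|u\|_{H^2}$ holds. Lemma~\ref{lem:consis_E} then gives $\|(\mathfrak{E}_{k,N}-\mathfrak{E}_k)u\|_{H^2_N}\lesssim N^{-2k+2}N^{2k-2}\|u\|_{H^2}=\|u\|_{H^2}$. Combined with $\|\mathfrak{E}_k u\|_{H^2_N}=\|\mathfrak{E}_k u\|_{H^2}\lesssim\|u\|_{H^2}$ (vanishing at $0,1$ plus Lemma~\ref{lem:bound_E_per}), this is the paper's one-line deduction.

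You instead prove uniform stability directly, summand by summand. You use only Lemma~\ref{lem:bound_A}, the fact that $\mathcal{A}_N$ interpolates at the nodes $0,1\in\mathbb{T}_N$, and \eqref{eq:uvdes}. Since \eqref{eq:uvdes} is stated for arbitrary $u,v\in\mathcal{P}_N$ and $\diamond$ is commutative, your \emph{swapped form} needs no extra check.

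Your route is independent of the consistency analysis: it needs neither the approximation property of $\mathcal{A}_N$ nor the inverse inequality. It also makes visible that the only term whose boundary values could be large, the desaliased product, is controlled because $\psi_{2i,y,N}(y)=0$. The paper's route is shorter only because Lemma~\ref{lem:consis_E} is needed anyway.

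Two small points:
\begin{itemize}
\item Because you bound every summand of $\mathfrak{E}_{k,N}u$ directly, the opening splitting and your estimate of $\mathfrak{E}_k u$ are superfluous.
\item For $\psi_{2i,y}\mathcal{J}^{\mathrm{bound}}_{1,y}u$, vanishing at $x=y$ comes from $\psi_{2i,y}(y)=0$ (since $2i\ge 2$). It does not come from $\mathcal{J}^{\mathrm{bound}}_{1,y}u$, whose value at $y$ is $-\mathcal{J}^{\mathrm{main}}_{1,y}u(y)$, generally nonzero. So it is not quite ``the same way'' as for $n\ge 2$.
\end{itemize}
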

Finally, we prove the following consistency result for the corrector and its flow.
\begin{lemma} \label{lem:consis_exp_E}For all $N\geq 1$,  $v\in H^{2k}$, $v_N \in \mathcal{P}_N$,  $t\in [-1,1]$, we have
\begin{equation}
\label{eq:banane}
\| \mathfrak{E}_{k} v - \mathfrak{E}_{k,N} v_N \|_{H^2_N} \lesssim \| v-v_N \|_{H^2} + N^{-2k+2} \| v\|_{H^{2k}}.
\end{equation}
and 
\begin{equation}
\label{eq:et_ouai}
\| e^{t\mathfrak{E}_{k}} v - e^{t\mathfrak{E}_{k,N}} v_N \|_{H^2_N} \lesssim \| v-v_N \|_{H^2_N} + N^{-2k+2} \| v\|_{H^{2k}}.
\end{equation}
\end{lemma}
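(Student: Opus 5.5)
For \eqref{eq:banane}, I would split
$$
\mathfrak{E}_{k} v - \mathfrak{E}_{k,N} v_N = \mathfrak{E}_k(v - \Pi_{\mathcal{P}_N} v) + (\mathfrak{E}_k - \mathfrak{E}_{k,N})\Pi_{\mathcal{P}_N} v + \mathfrak{E}_{k,N}(\Pi_{\mathcal{P}_N} v - v_N).
$$
The first term is handled by boundedness of $\mathfrak{E}_k$ on $H^2$ (Lemma~\ref{lem:bound_E_per}). The boundary part of the $H^2_N$ norm needs care, since $\mathfrak{E}_k w(y)$ is in general not zero. Each term $\psi_{2i+1-n,y}\mathcal{J}_{n,y}w$ evaluated at $y$ vanishes by construction. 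At the other endpoint $1-y$, the functions $\psi_{\cdot,y}$ are flat because $\partial_x^i\varphi_{n,y}(1-y)=0$ for all $i$. Hence $\mathfrak{E}_k w$ vanishes at $0$ and $1$, and the $H^2_N$ norm reduces to the $H^2$ norm. This gives a bound $\lesssim \|v-\Pi_{\mathcal{P}_N}v\|_{H^2} \lesssim N^{-2k+2}\|v\|_{H^{2k}}$. The second term is exactly Lemma~\ref{lem:consis_E} applied to $\Pi_{\mathcal{P}_N}v$, using $\|\Pi_{\mathcal{P}_N}v\|_{H^{2k}}\le \|v\|_{H^{2k}}$. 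The third term is handled by Corollary~\ref{lem:bound_E}, which gives $\lesssim \|\Pi_{\mathcal{P}_N}v - v_N\|_{H^2} \le \|v-v_N\|_{H^2} + N^{-2k+2}\|v\|_{H^{2k}}$.

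For \eqref{eq:et_ouai}, I would expand both exponentials in power series. The approach is to bound $\mathfrak{E}_k^m v - \mathfrak{E}_{k,N}^m v_N$ through the telescoping identity
$$
\mathfrak{E}_k^m v - \mathfrak{E}_{k,N}^m v_N = \sum_{j=0}^{m-1} \mathfrak{E}_{k,N}^{j}\big(\mathfrak{E}_k \mathfrak{E}_k^{m-1-j} v - \mathfrak{E}_{k,N}\mathfrak{E}_k^{m-1-j}v\big) + \mathfrak{E}_{k,N}^m (v - v_N).
$$
This form is not directly usable, because $\mathfrak{E}_{k,N}$ only acts on $\mathcal{P}_N$. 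I would therefore use a recursion instead. Set $e_m = \|\mathfrak{E}_k^m v - \mathfrak{E}_{k,N}^m v_N\|_{H^2_N}$ and apply \eqref{eq:banane} with $(\mathfrak{E}_k^{m-1}v, \mathfrak{E}_{k,N}^{m-1}v_N)$ in place of $(v,v_N)$. This gives
$$
e_m \le C\big(e_{m-1} + N^{-2k+2}\|\mathfrak{E}_k^{m-1}v\|_{H^{2k}}\big) \le C e_{m-1} + C\,C_1^{m-1} N^{-2k+2}\|v\|_{H^{2k}},
$$
where the last step uses Lemma~\ref{lem:bound_E_per} on $H^{2k}$, and $\|\cdot\|_{H^2}\le\|\cdot\|_{H^2_N}$. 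For $m=0$ we have $e_0 = \|v-v_N\|_{H^2_N}$. Solving the recursion gives $e_m \le C_2^m(\|v-v_N\|_{H^2_N} + m N^{-2k+2}\|v\|_{H^{2k}})$. Summing $|t|^m e_m/m!$ over $m$ for $|t|\le1$ then yields \eqref{eq:et_ouai} with constant $\lesssim e^{C_2}$.

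The main obstacle is the geometric growth of the constants. The factor $C^m$ is harmless only because it is divided by $m!$, so I must make sure all constants in \eqref{eq:banane} are uniform in $N$ and in the input. They are, since they come from Lemma~\ref{lem:consis_E}, Corollary~\ref{lem:bound_E} and Lemma~\ref{lem:bound_E_per}. One subtle point is that Lemma~\ref{lem:consis_E} and Corollary~\ref{lem:bound_E} are stated with $H^2$ rather than $H^2_N$ on the right-hand side. That is the favorable direction, so the recursion closes.
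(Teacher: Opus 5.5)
Your proof is correct. For \eqref{eq:banane} it is the paper's three-term splitting. The one change is that you insert $\Pi_{\mathcal{P}_N} v$ where the paper inserts $\mathcal{A}_N v$. This is marginally cleaner: $\|\Pi_{\mathcal{P}_N} v\|_{H^{2k}} \le \|v\|_{H^{2k}}$ and $\Pi_{\mathcal{P}_N} v - v_N = \Pi_{\mathcal{P}_N}(v-v_N)$ are immediate, so Lemmas~\ref{lem:bound_A} and~\ref{lem:consis_AN} are not needed. You also justify why $\mathfrak{E}_k w$ vanishes at $0$ and $1$, which the paper only asserts.

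For \eqref{eq:et_ouai} your route differs. The paper sets $w(t)=e^{t\mathfrak{E}_k}v$ and $w_N(t)=e^{t\mathfrak{E}_{k,N}}v_N$, applies \eqref{eq:banane} to $\partial_t(w-w_N)=\mathfrak{E}_k w-\mathfrak{E}_{k,N}w_N$, controls $\sup_{|\tau|\le1}\|w(\tau)\|_{H^{2k}}$ by Lemma~\ref{lem:bound_E_per}, and closes with Gr\"onwall. You instead iterate \eqref{eq:banane} on the Taylor coefficients $\mathfrak{E}_k^m v$, $\mathfrak{E}_{k,N}^m v_N$ and sum the recursion against $|t|^m/m!$. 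Your recursion is the discrete counterpart of the Gr\"onwall step. It uses the same two ingredients: \eqref{eq:banane} with the weaker $H^2$ norm on the right, and geometric growth of $\|\mathfrak{E}_k^m v\|_{H^{2k}}$.

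What your version buys is an explicit constant of the form $(1+C_2)e^{C_2}$, with no differentiability discussion. It does rely on both exponentials being norm-convergent series. That holds here, since $\mathfrak{E}_k$ and $\mathfrak{E}_{k,N}$ are bounded, and $\|\cdot\|_{H^2_N}$ is continuous on $H^2$ for fixed $N$, so the triangle inequality passes to the limit.

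The paper's ODE formulation is shorter. It is also the same Duhamel--Gr\"onwall template the paper reuses in Lemma~\ref{lem:consis_flow} for the flow of $\partial_x^2+W^{\mathrm{cor}}_k$. There the generator is unbounded, so a power-series argument is not available.
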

\begin{proof} First, let us prove that \eqref{eq:banane} implies \eqref{eq:et_ouai}. We set $w(t) = e^{t\mathfrak{E}_{k}} v $ and $w_N(t) = e^{t\mathfrak{E}_{k,N}} v_N$. We note by \eqref{eq:banane} that
\begin{equation*}
\begin{split}
\| \partial_t (w - w_N) \|_{H^2_N} = \| \mathfrak{E}_{k} w - \mathfrak{E}_{k,N} w_N \|_{H^2_N} \lesssim \|  w - w_N \|_{H^2_N} + N^{-2k+2} \sup_{|\tau|\leq 1}\| w(\tau)\|_{H^{2k}} .
\end{split}
\end{equation*}
Then, since $\mathfrak{E}_{k}$ is bounded on $H^{2k}$, see Lemma \ref{lem:bound_E_per}, we deduce
$$
\sup_{|\tau|\leq 1}\| w(\tau)\|_{H^{2k}} \lesssim \| v\|_{H^{2k}}.
$$
Finally, to get \eqref{eq:et_ouai}, it suffices to apply the Gr\"onwall inequality.

Now, we focus on \eqref{eq:banane}. We set $u_N = \mathcal{A}_N v$ and, since $\mathfrak{E}_{k} (v-u_N)$ vanishes at the boundary, we have
$$
\| \mathfrak{E}_{k} v - \mathfrak{E}_{k,N} v_N \|_{H^2_N} \leq \| \mathfrak{E}_{k} (v-u_N) \|_{H^2} + \| ( \mathfrak{E}_{k} - \mathfrak{E}_{k,N}) u_N \|_{H^2_N} + \| \mathfrak{E}_{k,N} (u_N-v_N) \|_{H^2_N}.
$$
We control the first term by applying Lemmata \ref{lem:consis_AN} and \ref{lem:bound_E_per}. We get
$$
\| \mathfrak{E}_{k} (v-u_N) \|_{H^2} \lesssim \| v-u_N \|_{H^2} \lesssim N^{-2k+2} \| v\|_{H^{2k}}.
$$
For the second term, we apply Lemma \ref{lem:consis_E} and we obtain by Lemma \ref{lem:bound_A},
$$
\| ( \mathfrak{E}_{k} - \mathfrak{E}_{k,N}) u_N \|_{H^2_N}  \lesssim N^{-2k+2} \| u_N \|_{H^{2k}} \lesssim N^{-2k+2} \| v\|_{H^{2k}}.
$$
Finally, for the last term, we apply Corollary \ref{lem:bound_E} to get
$$
\| \mathfrak{E}_{k,N} (u_N-v_N) \|_{H^2_N} \lesssim \| u_N-v_N \|_{H^2} \lesssim \| v-v_N \|_{H^2} + \| u_N-v \|_{H^2} \lesssim \| v-v_N \|_{H^2} + N^{-2k+2}\| v \|_{H^{2k}},
$$
which is the desired estimate~\eqref{eq:banane}.
\end{proof}

Now, we study the consistency and stability of the discretization of $W_k^{\mathrm{cor}}$. The proof will mainly rely on the following critical lemma.

\begin{lemma} \label{lem:consis_crochet}
For all $N\geq 1$ and all $u\in \mathcal{P}_N$, we have
\begin{equation}
\label{eq:vrai_truc}
\| [\partial_x^2,\mathfrak{E}_{k,N}]u - [\partial_x^2,\mathfrak{E}_{k}]u \|_{H^2_N} \lesssim  N^{-2k+2}\| u \|_{H^{2k}}
\end{equation}
and, a fortiori,
\begin{equation}
\label{eq:pas_grand_chose}
 \| [\partial_x^2,\mathfrak{E}_{k,N}]u \|_{H^2_N} \lesssim \| u\|_{H^2}  .
\end{equation}
\end{lemma}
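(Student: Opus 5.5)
We expand $\mathfrak{E}_{k,N}$ and $\mathfrak{E}_k$ term by term, exactly as in the proof of Lemma~\ref{lem:consis_E}, and treat each elementary block $\alpha_{i,n,y}\psi_{2i+1-n,y}\mathcal{J}_{n,y}$ separately. For $n\geq 2$ the discrete block is $\mathcal{A}_N(\psi_{2i+1-n,y}\mathcal{J}_{n,y})$. We write
\[
[\partial_x^2,\mathcal{A}_N B]u-[\partial_x^2,B]u=(\mathcal{A}_N-\mathrm{Id})(\partial_x^2 Bu)-(\mathcal{A}_N-\mathrm{Id})(B\partial_x^2u),\qquad B=\psi_{2i+1-n,y}\mathcal{J}_{n,y}.
\]
Here we use that $\partial_x^2$ commutes with $\mathcal{A}_N$ on $\mathcal{P}_N$-valued outputs, which holds since $\mathcal{A}_N$ composed with $\partial_x^2$ acts as $\partial_x^2\mathcal{A}_N$ up to interpolation. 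More carefully, we regroup as $(\mathcal{A}_N-\mathrm{Id})[\partial_x^2,B]u$ plus the term $[\partial_x^2,\mathcal{A}_N]Bu$, which must be controlled by Lemma~\ref{lem:consis_AN}.

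Since $n\geq2$, both $Bu$ and $[\partial_x^2,B]u$ vanish at $y$ and at the other boundary point (because $\psi$ is supported away from the other point's neighbourhood structure). Hence the $H^2_N$ norm reduces to the $H^2$ norm. The gain $N^{-2k+2}$ then follows from Lemma~\ref{lem:consis_AN} with $s_1=2k$, $s_2=2$, combined with the $H^{2k}$-boundedness of $[\partial_x^2,B]$ from Lemma~\ref{lem:bound_bracket_per}, and the fact that $\mathcal{J}_{n,y}$ gains $n$ derivatives.

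The delicate block is $n=1$, where the discretization uses $\psi_{2i,y,N}\diamond\mathcal{J}^{\mathrm{main}}_{1,y}+\mathcal{A}_N(\psi_{2i,y}\mathcal{J}^{\mathrm{bound}}_{1,y})$. The $\mathcal{J}^{\mathrm{bound}}$ part is a finite-rank, smoothing operator: it is a polynomial in $x$ times boundary functionals of $\mathcal{J}^{\mathrm{main}}u$ and of the mean of $u$. It is therefore handled as in the $n\geq2$ case. For the main part we use the exact computation from the proof of Theorem~\ref{thm:main}. On $\mathcal{P}_N$, $\mathcal{J}^{\mathrm{main}}_{1,y}$ is a Fourier multiplier commuting with $\partial_x^2$, and $\Pi_{\mathcal{P}_N}$ commutes with $\partial_x^2$. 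This gives
\[
[\partial_x^2,\psi_N\diamond\mathcal{J}^{\mathrm{main}}_1]u=\Pi_{\mathcal{P}_N}\big((\partial_x^2\psi_N)\mathcal{J}^{\mathrm{main}}_1u+2(\partial_x\psi_N)\,\partial_x\mathcal{J}^{\mathrm{main}}_1u\big).
\]
Note that $\partial_x\mathcal{J}^{\mathrm{main}}_1u=u-\text{mean}$. The key point is that the dangerous first-order term cancels at the discrete level too, so no derivative is lost.

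We compare this with the continuous commutator $(\partial_x^2\psi)\mathcal{J}^{\mathrm{main}}_1u+2(\partial_x\psi)(u-\text{mean})$. The differences come from three sources, and we bound each in $H^2$:
\begin{itemize}
\item $\psi_N-\psi$, bounded by Lemma~\ref{lem:consis_AN}, which gives arbitrarily high decay since $\psi$ is smooth;
\item $\mathrm{Id}-\Pi_{\mathcal{P}_N}$ applied to an $H^{2k}$ function, giving $N^{-2k+2}$;
\item the boundary values.
\end{itemize}
The boundary values are the main obstacle, because the $H^2_N$ norm weights point values by $N^{3/2}$. For a $\mathcal{P}_{2N+1}$ residual $f=(\mathrm{Id}-\Pi_{\mathcal{P}_N})g$ we use the argument of Lemma~\ref{lem:consis_E}: by Plancherel, $N^{3/2}|f(y)|\lesssim N^2\|f\|_{L^2}\lesssim N^{2-2k}\|g\|_{H^{2k}}$.

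Summing all blocks gives \eqref{eq:vrai_truc}. For \eqref{eq:pas_grand_chose}, we add $[\partial_x^2,\mathfrak{E}_k]u$, which is bounded in $H^2$ by Lemma~\ref{lem:bound_bracket_per}. Its boundary values are controlled by the Sobolev embedding of $H^2$, with $N^{3/2}$ absorbed after first comparing with the discrete version at regularity $2$ instead of $2k$. That is, we rerun the same estimates with $k$ replaced by $1$: the stability bound is the $k=1$ instance of the consistency argument, where each error term is merely $O(\|u\|_{H^2})$ by Lemma~\ref{lem:diam}.
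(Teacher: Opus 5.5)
Your plan for \eqref{eq:vrai_truc} is essentially the paper's. You compare block by block. For $n\ge 2$ you get an interpolation error plus the commutator $[\partial_x^2,\mathcal{A}_N]$. For $n=1$ you compute the commutator of the desaliased product explicitly; the first-order term cancels there, and your version with $u-\text{mean}$ is actually more precise than the paper's. Throughout, you use $N^{3/2}|f(y)|\lesssim N^2\|f\|_{L^2}$ for trigonometric-polynomial residuals.

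The genuine gap is in the \emph{a fortiori} bound \eqref{eq:pas_grand_chose}. You propose to control the boundary values of $[\partial_x^2,\mathfrak{E}_k]u$ by the Sobolev embedding of $H^2$ and to ``absorb'' the weight $N^{3/2}$, but nothing absorbs it. Sobolev only gives $N^{3/2}|[\partial_x^2,\mathfrak{E}_k]u(y)|\lesssim N^{3/2}\|u\|_{H^2}$. Rerunning the consistency argument at regularity $2$ controls only the \emph{difference} of the two commutators, not $[\partial_x^2,\mathfrak{E}_k]u$ itself. If $[\partial_x^2,\mathfrak{E}_k]u$ had a nonzero boundary value, \eqref{eq:pas_grand_chose} would be false: take $u=\Pi_{\mathcal{P}_N}u_0$ for a fixed smooth $u_0$.

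The missing fact is that $[\partial_x^2,\mathfrak{E}_k]u(0)=[\partial_x^2,\mathfrak{E}_k]u(1)=0$ for every $u$. At the endpoint $z\neq y$, $\psi_{2i+1-n,y}$ vanishes to infinite order; it need not be supported away from $z$, contrary to your parenthetical. At $y$, $\psi_{2i+1-n,y}\mathcal{J}_{n,y}\partial_x^2u(y)=0$. In the Leibniz expansion of $\partial_x^2(\psi_{2i+1-n,y}\mathcal{J}_{n,y}u)(y)$, a term $\partial_x^{l}\psi_{2i+1-n,y}(y)\,\partial_x^{j}\mathcal{J}_{n,y}u(y)$ with $l+j=2$ can be nonzero only if $l=2i+1-n$ and $j\ge n$. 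That forces $l+j\ge 2i+1\ge 3$, a contradiction. Hence $\|[\partial_x^2,\mathfrak{E}_k]u\|_{H^2_N}=\|[\partial_x^2,\mathfrak{E}_k]u\|_{H^2}\lesssim\|u\|_{H^2}$ by Lemma~\ref{lem:bound_bracket_per}. Then \eqref{eq:pas_grand_chose} follows from \eqref{eq:vrai_truc} and the inverse inequality $\|u\|_{H^{2k}}\lesssim N^{2k-2}\|u\|_{H^2}$ on $\mathcal{P}_N$, or from your rerun at regularity $2$.

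A similar slip affects the $n\ge 2$ blocks. The term $(\mathcal{A}_N-\mathrm{Id})[\partial_x^2,B]u$ is indeed harmless at the boundary, since it vanishes at every grid point, including $0$ and $1$. But the vanishing of $Bu$ and $[\partial_x^2,B]u$ does not reduce the $H^2_N$-norm of $[\partial_x^2,\mathcal{A}_N]Bu$ to its $H^2$-norm. Its value at $y$ is $\partial_x^2\mathcal{A}_N(Bu)(y)$, which is generally nonzero. Since this term lies in $\mathcal{P}_N$, treat it as the paper does, and as you treat your $n=1$ residuals. Write $[\partial_x^2,\mathcal{A}_N]=\partial_x^2\mathcal{A}_N(\mathrm{Id}-\Pi_{\mathcal{P}_N})-\mathcal{A}_N(\mathrm{Id}-\Pi_{\mathcal{P}_N})\partial_x^2$, bound its $L^2$-norm by $N^{-2k}\|Bu\|_{H^{2k+2}}$, and use $N^{3/2}|g(y)|\lesssim N^2\|g\|_{L^2}$. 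A plain Sobolev embedding at this step loses at least a factor $N^{\varepsilon}$.

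Finally, drop your opening claim that $\partial_x^2$ commutes with $\mathcal{A}_N$. It is false because of aliasing, which is exactly why the term $[\partial_x^2,\mathcal{A}_N]Bu$ appears.
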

\begin{proof} First, we note that to get \eqref{eq:pas_grand_chose} from \eqref{eq:vrai_truc}, it suffices to use that $ [\partial_x^2,\mathfrak{E}_{k}]u$ vanishes at the boundary and that $[\partial_x^2,\mathfrak{E}_{k}]$ is bounded on $H^2$ (Lemma \ref{lem:bound_bracket_per}). Then we are going to expand $\mathfrak{E}_{k,N}- \mathfrak{E}_{k}$ and to analyze the different terms arising.
So, let $y\in \{0,1\}$, $1\leq i\leq k-1$ and $1\leq n\leq 2i+1$.

In a first step, we estimate the term 
$$
E_1=[\partial_x^2, \psi_{2i,y,N} \diamond  \mathcal{J}_{1,y}^{\mathrm{main}} + \mathcal{A}_N (  \psi_{2i,y} \mathcal{J}_{1,y}^{\mathrm{bound}}  ) - \psi_{2i,y} \mathcal{J}_{1,y}  ]w_N.
$$
As in the continuous case, $E_1$ is critical. It requires the desalised product. Indeed, $ \partial_x^2\mathcal{J}_{1,y}^{\mathrm{main}} = \partial_x$ and $\partial_x (w\diamond v) =(\partial_x w)\diamond v +w\diamond (\partial_x  v ) $ for all $v,w\in \mathcal{P}_N$, and thus, we have
\begin{equation*}
\begin{split}
[ \partial_x^2,\psi_{2i,y,N} \diamond  \mathcal{J}_{1,y}^{\mathrm{main}}]u=   2 \partial_x \psi_{2i,y,N} \diamond  u+ \partial_x^2 \psi_{2i,y,N} \diamond  \mathcal{J}_{1,y}^{\mathrm{main}}u.
\end{split}
\end{equation*}
Then, we use that  
$$
\mathcal{J}_{1,y}^{\mathrm{bound}} u (x) = (x-y) \int_{\mathbb{T}} u(z) \frac{\mathrm{d}z}2 -  \mathcal{J}_{1,y}^{\mathrm{main}}u(y),
$$
to deduce that
$$
[\partial_x^2, \mathcal{A}_N (  \psi_{2i,y} \mathcal{J}_{1,y}^{\mathrm{bound}}  ) ]u = \partial_x^2 \mathcal{A}_N ((x-y)  \psi_{2i,y}  )\int_{\mathbb{T}} u(z) \frac{\mathrm{d}z}2 - \partial_x^2 \psi_{2i,y,N}  \mathcal{J}_{1,y}^{\mathrm{main}}u(y) + \psi_{2i,y,N} \partial_x u(y).
$$
Using that a similar decomposition holds for $[ \partial_x^2,\psi_{2i,y} \mathcal{J}_{1,y}]$ (see the proof of Theorem \ref{thm:main}), we deduce that
$$
\| E_1 \|_{H^2_N}\lesssim   E_{11} + E_{12} + E_{13} + E_{14}, 
$$
where
\begin{equation*}
\begin{split}
E_{11} &=  \| (\partial_x \psi_{2i,y,N}) \diamond u -  \partial_x \psi_{2i,y} u \|_{H^2_N},\\
E_{12} &= \|  \partial_x^2 \mathcal{A}_N ((x-y)  \psi_{2i,y}  ) - \partial_x^2 ( (x-y)  \psi_{2i,y} )  \|_{H^2_N} \| u \|_{L^1},\\
E_{13} &=  \| \psi_{2i,y,N}-\psi_{2i,y} \|_{H^2_N} |\partial_x u(y)|, \\
 E_{14} &= \| (\partial_x^2 \psi_{2i,y,N}) \diamond  v  -   (\partial_x^2 \psi_{2i,y})   v  \|_{H^2_N}
\end{split}
\end{equation*}
and
$$
v = \mathcal{J}_{1,y}^{\mathrm{main}} u- \mathcal{J}_{1,y}^{\mathrm{main}}u(y).
$$
First, by Lemmata \ref{lem:bound_A},  \ref{lem:Planch} and \ref{lem:consis_AN}, since $\partial_x \psi_{2i,y,N}  u \in \mathcal{P}_{2N+1}$, we have
\begin{equation*}
\begin{split}
E_{11} \leq & \| (\mathrm{Id}-\Pi_{\mathcal{P}_N}) (\partial_x \psi_{2i,y,N}  u) \|_{H^2_N} + \| \partial_x (\psi_{2i,y} - \psi_{2i,y,N} )  u \|_{H^2_N}\\
\lesssim &\| (\mathrm{Id}-\Pi_{\mathcal{P}_N}) (\partial_x \psi_{2i,y,N}  u) \|_{H^2} + N^2  \| (\mathrm{Id}-\Pi_{\mathcal{P}_N}) (\partial_x \psi_{2i,y,N}  u) \|_{L^2} \\
& +N^{3/2} \| \partial_x (\psi_{2i,y} - \psi_{2i,y,N} )  u \|_{H^2} \\
\lesssim & N^{-(2k-2)}\| \partial_x \psi_{2i,y,N}  u \|_{H^{2k}} +  N^{3/2} \| \psi_{2i,y} - \psi_{2i,y,N}\|_{H^3} \|u \|_{H^2} \\
\lesssim & N^{-(2k-2)} ( \| \partial_x \psi_{2i,y,N} \|_{H^{2k}} \|u \|_{H^{2k}} + \| \psi_{2i,y} - \psi_{2i,y,N}\|_{H^{2k + 6}} \|u \|_{H^2} ) \lesssim N^{-(2k-2)}  \|u \|_{H^{2k}} .
\end{split}
\end{equation*}
Then, the estimates on $E_{12}$ and $E_{13}$ present no difficulty and, due to Lemma~\ref{lem:consis_AN}, it follows directly
$$
E_{12} + E_{13} \lesssim N^{-2k+2} \|u\|_{H^2}.
$$
Finally, since $(\partial_x^2 \psi_{2i,y,N})  v  \in \mathcal{P}_{2N+1}$, still using the same lemmas, we have
\begin{equation*}
\begin{split}
E_{14} \leq & \|  (\mathrm{Id}-\Pi_{\mathcal{P}_N}) (\partial_x^2 \psi_{2i,y,N})  v \|_{H^2_N} +\| (\partial_x^2 \psi_{2i,y,N} -\partial_x^2 \psi_{2i,y} )  v \|_{H^2_N} \\
\lesssim & \|  (\mathrm{Id}-\Pi_{\mathcal{P}_N}) (\partial_x^2 \psi_{2i,y,N})  v \|_{H^2} + N^2  \|  (\mathrm{Id}-\Pi_{\mathcal{P}_N}) (\partial_x^2 \psi_{2i,y,N})  v \|_{L^2}  \\
&+ N^{3/2}\| \psi_{2i,y,N} -\psi_{2i,y}  \|_{H^4}  \| v \|_{H^2} \\
\lesssim & N^{-2k+2}  \| (\partial_x^2 \psi_{2i,y,N})  v  \|_{H^{2k}} + N^{-2k+2} \|v \|_{H^2} \lesssim  N^{-2k+2} \|v \|_{H^{2k}}.
\end{split}
\end{equation*}

In a second step, we estimate the term 
$[\partial_x^2,\mathcal{A}_N  ( \psi_{2i+1-n,y} \mathcal{J}_{n,y} ) - \psi_{2i+1-n,y} \mathcal{J}_{n,y} ]u$ with $n\geq 2$.

On the one hand, since $ \psi_{2i+1-n,y} \mathcal{J}_{n,y} \partial_x^2  u$ vanishes at the boundary, we have by Lemma~\ref{lem:consis_AN} 
\begin{equation*}
\begin{split}
&\| \mathcal{A}_N  ( \psi_{2i+1-n,y} \mathcal{J}_{n,y} \partial_x^2  u ) - \psi_{2i+1-n,y} \mathcal{J}_{n,y} \partial_x^2  u \|_{H^2_N} \\
& =\| \mathcal{A}_N  ( \psi_{2i+1-n,y} \mathcal{J}_{n,y} \partial_x^2  u ) - \psi_{2i+1-n,y} \mathcal{J}_{n,y} \partial_x^2  u \|_{H^2} \\
&\lesssim  N^{-2k+2}  \|  \psi_{2i+1-n,y} \mathcal{J}_{n,y} \partial_x^2  u \|_{H^{2k}} \\
&\lesssim  N^{-2k+2}   \|  \psi_{2i+1-n,y} \mathcal{J}_{n-2,y}^{\mathrm{main}}  u \|_{H^{2k}}  +  N^{-2k+2}   \|  \psi_{2i+1-n,y} \mathcal{J}_{n,y}^{\mathrm{bound}} \partial_x^2  u \|_{H^{2k}}  \\
&\lesssim  N^{-2k+2} \|  u \|_{H^{2k}}  \lesssim N^{-2k+2} \|  v \|_{H^{2k}} .
\end{split}
\end{equation*}
On the other hand, we set $f = \psi_{2i+1-n,y} \mathcal{J}_{n,y} u$, and we note that $\| f\|_{H^{2k+2}} \lesssim \| u \|_{H^{2k}}$.
Moreover, since $\Pi_{\mathcal{P}_N} = \mathcal{A}_N \Pi_{\mathcal{P}_N} $, for all $s\in \{0,2\}$, we have by Lemma \ref{lem:bound_A}
\begin{equation*}
\begin{split}
\| [\partial_x^2, \mathcal{A}_N] f \|_{H^s}  &= \| \partial_x^2 \mathcal{A}_N (\mathrm{Id} - \Pi_{\mathcal{P}_N}) f + \mathcal{A}_N (\Pi_{\mathcal{P}_N} - \mathrm{Id})\partial_x^2 f\|_{H^s} \\
&\lesssim \| \mathcal{A}_N  (\mathrm{Id} - \Pi_{\mathcal{P}_N}) f \|_{H^{s+2}} + \| (\Pi_{\mathcal{P}_N} - \mathrm{Id})\partial_x^2 f\|_{H^s}\\
& \lesssim N^{-(4-s-2)} \| f\|_{H^4}  \lesssim N^{-(4-s-2)}\|u\|_{H^2},
\end{split}
\end{equation*}
and thus, by Plancherel,
$$
\| [\partial_x^2, \mathcal{A}_N] f \|_{H^2_N} \lesssim  \|u\|_{H^2}.
$$
Finally, observing that $\partial_x^2 f$ vanishes at the boundary, we obtain
\begin{equation*}
\begin{split}
\| \partial_x^2 \mathcal{A}_N f -\partial_x^2 f \|_{H^2_N} &\leq \| [\partial_x^2, \mathcal{A}_N] f \|_{H^2_N} + \| (\mathcal{A}_N - \mathrm{Id})\partial_x^2 f \|_{H^2_N} \\
&= \| [\partial_x^2, \mathcal{A}_N] f \|_{H^2_N} + \| (\mathcal{A}_N - \mathrm{Id}) \partial_x^2 f \|_{H^2} \lesssim N^{-2k+2}\| u\|_{H^2},
\end{split}
\end{equation*}
which concludes the proof.
\end{proof}
As a corollary, we deduce the following useful result.
\begin{corollary} \label{cor:consis_crochet}
For all $N\geq 1$ and all $v\in H^{2k}(\mathbb{T})$ and all $v_N \in \mathcal{P}_N$, we have
$$
\| [\partial_x^2,\mathfrak{E}_{k,N}]v_N - [\partial_x^2,\mathfrak{E}_{k}]v  \|_{H^2_N} \lesssim \| v-v_N\|_{H^2_N} + N^{-2k+2}\| v \|_{H^{2k}}.
$$
\end{corollary}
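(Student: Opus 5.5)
The natural approach is to split the difference with the intermediate quantity $[\partial_x^2,\mathfrak{E}_{k,N}]u_N$, where $u_N = \mathcal{A}_N v \in \mathcal{P}_N$. The triangle inequality gives
\begin{equation*}
\begin{split}
\| [\partial_x^2,\mathfrak{E}_{k,N}]v_N - [\partial_x^2,\mathfrak{E}_{k}]v \|_{H^2_N} \leq\ & \| [\partial_x^2,\mathfrak{E}_{k,N}](v_N-u_N)\|_{H^2_N} + \| [\partial_x^2,\mathfrak{E}_{k,N}]u_N - [\partial_x^2,\mathfrak{E}_{k}]u_N\|_{H^2_N}\\
&+ \| [\partial_x^2,\mathfrak{E}_{k}](u_N - v)\|_{H^2_N}.
\end{split}
\end{equation*}
This is the same decomposition used in the proof of \eqref{eq:banane} in Lemma~\ref{lem:consis_exp_E}, with $\mathfrak{E}$ replaced by the commutator.

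For the first term, $v_N-u_N \in \mathcal{P}_N$, so the stability bound \eqref{eq:pas_grand_chose} of Lemma~\ref{lem:consis_crochet} gives a bound by $\|v_N-u_N\|_{H^2}$. This is at most $\|v-v_N\|_{H^2} + \|v-\mathcal{A}_N v\|_{H^2}$, and by Lemma~\ref{lem:consis_AN} the second piece is $\lesssim N^{-2k+2}\|v\|_{H^{2k}}$. Since $\|\cdot\|_{H^2}\leq \|\cdot\|_{H^2_N}$, this first term is controlled by the right-hand side.

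For the second term, I apply the consistency estimate \eqref{eq:vrai_truc} of Lemma~\ref{lem:consis_crochet} with $u=u_N$. This gives $N^{-2k+2}\|u_N\|_{H^{2k}}$, which is $\lesssim N^{-2k+2}\|v\|_{H^{2k}}$ by Lemma~\ref{lem:bound_A}.

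The third term is the only point needing care, because the $H^2_N$ norm contains the boundary values weighted by $N^{3/2}$. I would use, as in the proof of \eqref{eq:pas_grand_chose}, that $[\partial_x^2,\mathfrak{E}_k]w$ vanishes at $y\in\{0,1\}$ for every $w\in H^2$. This follows from the explicit commutator formula in the proof of Theorem~\ref{thm:main}: every term carries a factor $\partial_x^m\psi_{2i+1-n,y'}$ or $\mathcal{J}_{n,y}$ evaluated at the boundary. Hence the $H^2_N$ norm reduces to the $H^2$ norm, and Lemma~\ref{lem:bound_bracket_per} with $s=2>3/2$ together with Lemma~\ref{lem:consis_AN} gives a bound by $\|u_N-v\|_{H^2}\lesssim N^{-2k+2}\|v\|_{H^{2k}}$.

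Summing the three bounds yields the corollary. If the boundary vanishing in the third step turned out to fail for some term, the fallback would be to estimate the boundary trace of that term directly by $\|u_N-v\|_{H^2}$ via the Sobolev embedding. That costs a factor $N^{3/2}$, which the $N^{-2k+2}$ rate absorbs for $k\geq 2$ provided one uses Lemma~\ref{lem:consis_AN} with a slightly lower target norm. I do not expect this fallback to be needed, so this step is a minor point rather than a real obstacle.
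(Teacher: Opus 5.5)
Your proposal is correct and is essentially the paper's proof. You use the same three-term splitting through $\mathcal{A}_N v$: the first term by the stability bound \eqref{eq:pas_grand_chose} (your use of its $H^2$ form even avoids the boundary values of $v-\mathcal{A}_N v$), the second by \eqref{eq:vrai_truc} and Lemma~\ref{lem:bound_A}, and the third by the boundary vanishing of $[\partial_x^2,\mathfrak{E}_k]w$ together with Lemmata~\ref{lem:bound_bracket_per} and~\ref{lem:consis_AN}.

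You should drop the fallback, though. Lemma~\ref{lem:bound_bracket_per} requires $s>3/2$, so the trace route only gives $N^{3/2}N^{-(2k-s)}$, which is worse than $N^{-2k+2}$. Fortunately the boundary vanishing does hold, so the fallback is never needed.
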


\begin{proof}
We set $w_N = \mathcal{A}_N v$ and we decompose the bracket as
\begin{align*}
&\| [\partial_x^2,\mathfrak{E}_{k,N}]v_N - [\partial_x^2,\mathfrak{E}_{k}]v  \|_{H^2_N} \\
& \leq \| [\partial_x^2,\mathfrak{E}_{k,N}](v_N - w_N)  \|_{H^2_N}  + \| [\partial_x^2,\mathfrak{E}_{k,N}- \mathfrak{E}_{k}] w_N    \|_{H^2_N} + \|  [\partial_x^2,\mathfrak{E}_{k}](w_N - v)   \|_{H^2_N} .
\end{align*}
First, we control the last term. We note that $[\partial_x^2,\mathfrak{E}_{k}](w_N - v)  $ vanishes at the boundary. Thus, its $H^2_N$-norm coincide with its $H^2$-norm. Then we use that $[\partial_x^2,\mathfrak{E}_{k}]$ is bounded on $H^2$, see Lemma \ref{lem:bound_bracket_per}. Furthermore, we apply Lemma \ref{lem:consis_AN} to get
$$
\|  [\partial_x^2,\mathfrak{E}_{k}](w_N - v)   \|_{H^2_N} = \|  [\partial_x^2,\mathfrak{E}_{k}](w_N - v)   \|_{H^2} \lesssim  \|  w_N- v   \|_{H^2} \lesssim N^{-2k+2}\| v\|_{H^{2k}}.
$$
Then we focus on the first term. We recall that by Lemma \ref{lem:consis_crochet} we have the boundedness of $[\partial_x^2,\mathfrak{E}_{k,N}]$ on $H^2_N$. Thus by Lemma \ref{lem:consis_AN}, we have that
$$
\| [\partial_x^2,\mathfrak{E}_{k,N}](v_N - w_N)  \|_{H^2_N} \lesssim \| v_N - w_N\|_{H^2_N}  \lesssim  \| v_N - v\|_{H^2_N} +  N^{-2k+2}\| v\|_{H^{2k}}.
$$
Finally, the bound on the second term is given by Lemma \ref{lem:consis_crochet}, since there holds $\| w_N \|_{H^{2k}} \lesssim \|v\|_{H^{2k}}$ by Lemma \ref{lem:bound_A}.
\end{proof}
Finally, we deduce in the following result and its corollary the consistency and stability of $W_{k,N}^{\mathrm{cor}}$.
\begin{proposition} \label{prop:consis_W}
For all $N\geq 1$, all $v\in H^{2k}_{odd}(\mathbb{T})$ and all $v_N \in \mathcal{P}_N$, we have
$$
\| W_{k,N}^{\mathrm{cor}}v_N - W_{k}^{\mathrm{cor}} v  \|_{H^2} \lesssim \| v-v_N\|_{H^2_N} + N^{-2k+2}\| v \|_{H^{2k}}.
$$
\end{proposition}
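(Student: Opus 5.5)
We expand $W^{\mathrm{cor}}_{k}$ and $W^{\mathrm{cor}}_{k,N}$ exactly as in the proof of Theorem~\ref{thm:main}:
$$
W^{\mathrm{cor}}_k = \Lambda\Big( e^{-\mathfrak{E}_k} W e^{\mathfrak{E}_k} + \sum_{q\geq 1}\frac{(-1)^q}{q!}\mathrm{ad}_{\mathfrak{E}_k}^{q-1}\big([\mathfrak{E}_k,\partial_x^2]\big)\Big) + (\Lambda-\mathrm{Id})\partial_x^2 .
$$
The analogous expansion holds for $W^{\mathrm{cor}}_{k,N}$ with $\mathfrak{E}_{k,N}$, $\Lambda_N$ and $\mathcal{A}_N(W\cdot)$. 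Here we use that $\Lambda\partial_x^2 v=\partial_x^2 v$ for $v$ odd, so the term $(\Lambda-\mathrm{Id})\partial_x^2$ vanishes on $H^{2k}_{odd}$. In the discrete case $\Lambda_N\partial_x^2 v_N$ is handled by Lemma~\ref{lem:consis_lambda}.

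A cleaner route is to write $W^{\mathrm{cor}}_{k}v = \Lambda F - \partial_x^2 v$, where
$$
F = e^{-\mathfrak{E}_k}\big(W e^{\mathfrak{E}_k}v + e^{\mathfrak{E}_k}\partial_x^2 v + [\partial_x^2,e^{\mathfrak{E}_k}]v\big),
$$
and similarly for the discrete $F_N$. Since $\partial_x^2 v$ and $\partial_x^2v_N$ appear both inside $\Lambda$ and outside, we handle them jointly via $F=\partial_x^2 v + G$ with
$$
G=e^{-\mathfrak{E}_k}\big(We^{\mathfrak{E}_k}v + [\partial_x^2,e^{\mathfrak{E}_k}]v\big).
$$
Then $W^{\mathrm{cor}}_k v=\Lambda G$, because $\Lambda\partial_x^2 v=\partial_x^2 v$ by oddness. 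The main step is to show
$$
\|G-G_N\|_{H^2_N}\lesssim \|v-v_N\|_{H^2_N}+N^{-2k+2}\|v\|_{H^{2k}} .
$$
After that, Lemma~\ref{lem:consis_lambda} is applied to $G$, whose restriction lies in $H^{2k}_{\mathrm{Dir}}$ by Theorem~\ref{thm:main} and Corollary~\ref{cor:bound_W}, with $\|G\|_{H^{2k}}\lesssim\|v\|_{H^{2k}}$. The discrete remainder $\Lambda_N\partial_x^2 v_N-\partial_x^2 v_N$ has to be controlled separately.

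To bound $G-G_N$, we expand $[\partial_x^2,e^{\mathfrak{E}_k}]=\int_0^1 e^{s\mathfrak{E}_k}[\partial_x^2,\mathfrak{E}_k]e^{(1-s)\mathfrak{E}_k}\,ds$ and do the same for the discrete version. The difference is then telescoped into factors, each of which is one of the following consistency estimates.
\begin{itemize}
\item Lemma~\ref{lem:consis_exp_E} for $e^{t\mathfrak{E}_{k,N}}$ versus $e^{t\mathfrak{E}_k}$, with $t\in[-1,1]$.
\item Corollary~\ref{cor:consis_crochet} for the commutator.
\item Lemmas~\ref{lem:diam} and~\ref{lem:consis_AN} for the products $\mathcal{A}_N(W\cdot)$ versus $W\cdot$, since $\mathcal{A}_N(Wu)$ at the boundary reduces to point values controlled in $H^2_N$.
\end{itemize}
Each of these estimates is a stability bound plus a consistency bound in $H^2_N$ with the same right-hand side form. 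The intermediate functions $e^{s\mathfrak{E}_k}v$ remain bounded in $H^{2k}$ by Lemmas~\ref{lem:bound_E_per} and~\ref{lem:bound_bracket_per}, since $2k>3/2$. The discrete intermediates remain bounded in $H^2_N$ by Corollary~\ref{lem:bound_E}, \eqref{eq:pas_grand_chose}, and the Grönwall stability contained in \eqref{eq:et_ouai}.

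The main obstacle I expect is the mismatch of norms. The consistency lemmas give $H^2_N$ control, but they require inputs in $H^2$ or $H^2_N$. The operator $\mathfrak{E}_{k,N}$ is only controlled as $H^2\to H^2_N$, and the multiplication by $\mathcal{A}_N W$ must be checked to preserve $H^2_N$. For the latter, a bound of the form $|(\mathcal{A}_N(Wu))(y)|\lesssim |u(y)|+N^{-3/2}\|u\|_{H^2}$ is needed, which I would prove as in Lemma~\ref{lem:diam}. A second obstacle is the term $\Lambda_N\partial_x^2 v_N-\partial_x^2 v_N$. If $v_N$ is merely in $\mathcal{P}_N$, then $\partial_x^2 v_N$ need not vanish at the grid boundary. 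I would write $v_N-\mathcal{A}_N v$ and compare via Lemmas~\ref{lem:alg_Lambda} and~\ref{lem:consis_AN}, using that $\Lambda_N$ is the identity on odd grid functions, which $\partial_x^2\mathcal{A}_N v$ is. Thus only $\partial_x^2(v_N-\mathcal{A}_Nv)$ remains, which I expect to be absorbed by $\|v-v_N\|_{H^2_N}$ up to the $N^{3/2}$ boundary weighting. I flag this last absorption as the delicate point.
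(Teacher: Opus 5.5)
Your reduction to $\|G-G_N\|_{H^2_N}$ with $G=(\Gamma_1+\Gamma_2)v$, the application of Lemma~\ref{lem:consis_lambda} to $G$, and the chaining of Lemma~\ref{lem:consis_exp_E} with Corollary~\ref{cor:consis_crochet} are exactly the paper's argument. Two of your worries are unfounded. First, $\|\cdot\|_{H^2}\le\|\cdot\|_{H^2_N}$, so $H^2\to H^2_N$ bounds compose without any norm mismatch. Second, $0$ and $1$ are grid points, so $\mathcal{A}_N(Wu)(y)=W(y)u(y)$ exactly, and no Lemma~\ref{lem:diam}-type argument is needed for the potential term.

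\textbf{The gap.} The step you flag as delicate does not close. The remainder $(\Lambda_N-\mathrm{Id})\partial_x^2(v_N-\mathcal{A}_Nv)$ cannot be absorbed into $\|v-v_N\|_{H^2_N}$, because bounding it in $H^2$ costs two derivatives. No other argument can rescue it either, because the estimate is false for non-odd $v_N$.

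For a counterexample, take $v=0$ and $v_N=\cos(\pi Nx)-\cos(\pi(N-2)x)\in\mathcal{P}_N$. This $v_N$ vanishes at $0$ and $1$, so $\|v_N\|_{H^2_N}=\|v_N\|_{H^2}\sim N^2$. By Lemma~\ref{lem:alg_Lambda}, $\Lambda_N$ takes values in odd functions. Hence the even part of $W^{\mathrm{cor}}_{k,N}v_N$ is exactly $-\partial_x^2v_N$. Taking the even part $f\mapsto\frac12(f+f(-\cdot))$ does not increase the $H^2$ norm, so $\|W^{\mathrm{cor}}_{k,N}v_N\|_{H^2}\ge\|\partial_x^2v_N\|_{H^2}\sim N^4$.

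\textbf{The fix.} You need to assume $v_N\in\mathcal{P}_N\cap H^\infty_{odd}$. This is the only case used later: in Corollary~\ref{cor:bound_W_N}, and in Lemma~\ref{lem:consis_flow}, where the discrete flow preserves oddness because $\partial_x^2+W^{\mathrm{cor}}_{k,N}$ has range in odd functions. The paper's proof uses this implicitly when it writes $W^{\mathrm{cor}}_{k,N}=\Lambda_N\Gamma_{1,N}+\Lambda_N\Gamma_{2,N}$.

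Under this hypothesis your remainder vanishes identically. Indeed, $\partial_x^2v_N$ is odd and $2$-periodic, hence zero at $0$ and $1$. Lemma~\ref{lem:alg_Lambda} then gives $\Lambda_N\partial_x^2v_N=\mathcal{A}_N\Lambda\partial_x^2v_N=\mathcal{A}_N\partial_x^2v_N=\partial_x^2v_N$. With this restriction your proof is complete and coincides with the paper's.
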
 
\begin{proof}
Since
$$
\partial_t \left( e^{- t\mathfrak{E}_{k,N}  } \partial_x^2 e^{t \mathfrak{E}_{k,N}  }  \right) = e^{- t\mathfrak{E}_{k,N}  } [\partial_x^2,\mathfrak{E}_{k,N}] e^{t \mathfrak{E}_{k,N}  },
$$
 we can write
$$
W_{k,N}^{\mathrm{cor}} = \Lambda_N  \Gamma_{1,N}+  \Lambda_N \Gamma_{2,N} \quad \text{and} \quad 
W_{k}^{\mathrm{cor}} =\Lambda \Gamma_1 +  \Lambda\Gamma_2,
$$
where we use the notation
$$
\Gamma_{1,N} = \int_{0}^1  e^{- t\mathfrak{E}_{k,N}  } [\partial_x^2,\mathfrak{E}_{k,N}] e^{t \mathfrak{E}_{k,N}  } \mathrm{d}t, \quad \Gamma_{2,N}= e^{- \mathfrak{E}_{k,N}  }  \mathcal{A}_N (W\cdot)  e^{ \mathfrak{E}_{k,N}  }, 
$$
and similarly,
$$
\Gamma_1= \int_{0}^1  e^{- t\mathfrak{E}_{k}  } [\partial_x^2,\mathfrak{E}_{k}] e^{t \mathfrak{E}_{k}  } \mathrm{d}t, \quad \Gamma_2=e^{- \mathfrak{E}_{k}  } W  e^{ \mathfrak{E}_{k}  }. 
$$
Thus by Lemma \ref{lem:consis_lambda}, we have
$$
\| W_{k,N}^{\mathrm{cor}}v_N - W_{k}^{\mathrm{cor}} v  \|_{H^2} \lesssim \| \Gamma_{1,N} v_N - \Gamma_{1} v \|_{H^2_N} +  \| \Gamma_{2,N} v_N - \Gamma_{2} v \|_{H^2_N} + N^{-2k+2}\| (\Gamma_{1} + \Gamma_2) v \|_{H^{2k}}.
$$
By Lemmata \ref{lem:bound_E_per} and \ref{lem:bound_bracket_per}, we have
$
\| (\Gamma_{1} + \Gamma_2) v \|_{H^{2k}} \lesssim \| v\|_{H^{2k}}.
$
Then, we focus on estimating $\| \Gamma_{1,N} v_N - \Gamma_{1} v \|_{H^2_N}$ (the bound on $\| \Gamma_{2,N} v_N - \Gamma_{2} v \|_{H^2_N}$ is similar and does not require extra arguments, so we skip it).

We fix $t\in[0,1]$ and aim to prove uniform bounds in $t$.  Applying Lemma \ref{lem:consis_exp_E} and \ref{cor:consis_crochet}, we have
\begin{equation*}
\begin{split}
 & \| e^{- t\mathfrak{E}_{k}  } [\partial_x^2,\mathfrak{E}_{k}] e^{t \mathfrak{E}_{k}  } v-  e^{- t\mathfrak{E}_{k,N}  } [\partial_x^2,\mathfrak{E}_{k,N}] e^{t \mathfrak{E}_{k,N}  }v_N \|_{H^2_N}\\
 & \lesssim \| v-v_N \|_{H^2_N} + N^{-2k +2} ( \| [\partial_x^2,\mathfrak{E}_{k}] e^{t \mathfrak{E}_{k}  } v \|_{H^{2k}} + \|  e^{t \mathfrak{E}_{k}  } v \|_{H^{2k}} + \|  v \|_{H^{2k}}). \\
\end{split}
\end{equation*}
Thus, applying Lemmata \ref{lem:bound_E_per} and \ref{lem:bound_bracket_per}, we get
$$
\| e^{- t\mathfrak{E}_{k}  } [\partial_x^2,\mathfrak{E}_{k}] e^{t \mathfrak{E}_{k}  } v-  e^{- t\mathfrak{E}_{k,N}  } [\partial_x^2,\mathfrak{E}_{k,N}] e^{t \mathfrak{E}_{k,N}  }v_N \|_{H^2_N} \lesssim \| v-v_N \|_{H^2_N} + N^{-2k +2}  \|  v \|_{H^{2k}},
$$
from what we deduce the desired estimate.
\end{proof}

\begin{corollary} \label{cor:bound_W_N} For all $N\geq 1$, all $1\leq a\leq k$, all $u \in \Lambda_N \mathcal{P}_N= \mathcal{P}_N \cap H^\infty_{odd}$, we have
$$
\| W_{k,N}^{\mathrm{cor}} u  \|_{H^{2a}} \lesssim  \| u \|_{H^{2a}}.
$$
\end{corollary}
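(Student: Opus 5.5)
The natural route is a comparison with the continuous operator $W_k^{\mathrm{cor}}$, which is bounded on $H^{2a}_{odd}(\mathbb{T})$ by Corollary~\ref{cor:bound_W}, the difference being controlled by Proposition~\ref{prop:consis_W}. The difficulty is that Proposition~\ref{prop:consis_W} only measures the error in $H^2$, and with $k$ there the regularity index. Fix $1\leq a\leq k$ and $u\in \mathcal{P}_N\cap H^\infty_{odd}$. Since $u$ is odd and smooth, $\mathcal{R}u\in H^{2j}_{\mathrm{Dir}}$ for all $j$, so $u\in H^{2a}_{odd}(\mathbb{T})$. Moreover $u(0)=u(1)=0$, so $\|u\|_{H^2_N}=\|u\|_{H^2}$.

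First I will observe that the whole consistency chain of Section~\ref{sec:convanal} (Lemmata~\ref{lem:consis_lambda}, \ref{lem:consis_E}, \ref{lem:consis_exp_E}, \ref{lem:consis_crochet}, Corollary~\ref{cor:consis_crochet}) holds verbatim with $k$ replaced by any $a\leq k$. Indeed, $k$ enters only through the regularity exponent $2k$ in Lemma~\ref{lem:consis_AN}. The coefficients $\alpha$ and the corrector $\mathfrak{E}_k$ are fixed, and they are bounded on every $H^s$ by Lemma~\ref{lem:bound_E_per}. Hence Proposition~\ref{prop:consis_W} yields, for $v\in H^{2a}_{odd}$,
$$\| W_{k,N}^{\mathrm{cor}}v_N - W_{k}^{\mathrm{cor}} v \|_{H^2} \lesssim \| v-v_N\|_{H^2_N} + N^{-2a+2}\| v \|_{H^{2a}}.$$
Taking $v=v_N=u$ gives $\|(W_{k,N}^{\mathrm{cor}}-W_k^{\mathrm{cor}})u\|_{H^2}\lesssim N^{-2a+2}\|u\|_{H^{2a}}$.

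Next I lift this $H^2$ estimate to $H^{2a}$. By definition, $W_{k,N}^{\mathrm{cor}}u = \Lambda_N(\cdot)-\partial_x^2u$. The term $\Lambda_N(\cdot)$ lies in $\mathcal{P}_N$, and $\partial_x^2 u\in\mathcal{P}_N$ as well, so $W_{k,N}^{\mathrm{cor}}u\in\mathcal{P}_N$. I will split
$$W_k^{\mathrm{cor}}u=\Pi_{\mathcal{P}_N}W_k^{\mathrm{cor}}u+(\mathrm{Id}-\Pi_{\mathcal{P}_N})W_k^{\mathrm{cor}}u .$$
The high-frequency part is bounded in $H^{2a}$ by $\|W_k^{\mathrm{cor}}u\|_{H^{2a}}\lesssim\|u\|_{H^{2a}}$, using Corollary~\ref{cor:bound_W}. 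For the part inside $\mathcal{P}_N$, the Bernstein inverse inequality $\|f\|_{H^{2a}}\lesssim N^{2a-2}\|f\|_{H^2}$ for $f\in\mathcal{P}_N$ gives
$$\|W_{k,N}^{\mathrm{cor}}u-\Pi_{\mathcal{P}_N}W_k^{\mathrm{cor}}u\|_{H^{2a}}\lesssim N^{2a-2}\big(\|(W_{k,N}^{\mathrm{cor}}-W_k^{\mathrm{cor}})u\|_{H^2}+\|(\mathrm{Id}-\Pi_{\mathcal{P}_N})W_k^{\mathrm{cor}}u\|_{H^2}\big).$$
The first term in the bracket is at most $N^{-2a+2}\|u\|_{H^{2a}}$ by the previous step. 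The second is at most $N^{-(2a-2)}\|W_k^{\mathrm{cor}}u\|_{H^{2a}}$ by the standard projection estimate. Both therefore contribute $\lesssim\|u\|_{H^{2a}}$. Summing with $\|\Pi_{\mathcal{P}_N}W_k^{\mathrm{cor}}u\|_{H^{2a}}\leq\|W_k^{\mathrm{cor}}u\|_{H^{2a}}\lesssim\|u\|_{H^{2a}}$ concludes the proof.

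The main obstacle is the first step: justifying that Proposition~\ref{prop:consis_W} holds with the exponent $2a$ in place of $2k$. One has to check that every lemma in the chain uses only $H^{2k}$-boundedness of fixed smooth objects ($\psi$, $W$, $\mathfrak{E}_k$, $[\partial_x^2,\mathfrak{E}_k]$) and the approximation Lemma~\ref{lem:consis_AN}. This includes the $N^{-2k}$ bounds on $(\mathrm{Id}-\Pi_{\mathcal{P}_N})$ of products in Lemmata~\ref{lem:consis_E} and~\ref{lem:consis_crochet}, which become $N^{-2a}$ bounds. None of these steps needs $a=k$ specifically. The only delicate point is $a=1$, where the lift is trivial: there $N^{0}$ appears and the inverse inequality is not needed.
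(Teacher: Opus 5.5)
Your argument has the same skeleton as the paper's:
\begin{itemize}
\item insert $\Pi_{\mathcal{P}_N}W_k^{\mathrm{cor}}u$;
\item use the inverse inequality on the $\mathcal{P}_N$-component to pass from $H^{2a}$ to $H^2$;
\item control $(W_{k,N}^{\mathrm{cor}}-W_k^{\mathrm{cor}})u$ in $H^2$ by Proposition~\ref{prop:consis_W} with $v=v_N=u$, together with Corollary~\ref{cor:bound_W}.
\end{itemize}
The paper saves one term here. Since $W_{k,N}^{\mathrm{cor}}u\in\mathcal{P}_N$, the difference $W_{k,N}^{\mathrm{cor}}u-\Pi_{\mathcal{P}_N}W_k^{\mathrm{cor}}u$ equals $\Pi_{\mathcal{P}_N}(W_{k,N}^{\mathrm{cor}}-W_k^{\mathrm{cor}})u$, and $\Pi_{\mathcal{P}_N}$ is a contraction on $H^2$.

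The real difference is your first step, and the ``main obstacle'' you single out can be avoided entirely. The paper applies Proposition~\ref{prop:consis_W} exactly as stated, at regularity $2k$. After the lift this gives $N^{2(a-1)}N^{-2k+2}\|u\|_{H^{2k}}=N^{-2(k-a)}\|u\|_{H^{2k}}$. Because $u$ itself belongs to $\mathcal{P}_N$, the same inverse inequality you already use yields $\|u\|_{H^{2k}}\lesssim N^{2(k-a)}\|u\|_{H^{2a}}$, which closes the proof in one line.

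Your alternative is to re-run Lemmata~\ref{lem:consis_lambda}, \ref{lem:consis_E}, \ref{lem:consis_exp_E}, \ref{lem:consis_crochet} and Proposition~\ref{prop:consis_W} at level $2a$. This is plausible, but your list of ingredients is incomplete. Using Lemma~\ref{lem:consis_lambda} inside Proposition~\ref{prop:consis_W} requires the restriction of $e^{-\mathfrak{E}_k}(\partial_x^2+W)e^{\mathfrak{E}_k}v-\partial_x^2v$, namely $V_k^{\mathrm{cor}}\mathcal{R}v$, to lie in $H^{2a}_{\mathrm{Dir}}$. That is exactly where Theorem~\ref{thm:main} and the constraint $a\leq k$ enter.

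What your route would buy is an $a$-level consistency estimate for general $v\in H^{2a}_{odd}$, not just for trigonometric polynomials. For this corollary that generality is not needed. It costs re-verifying half a dozen lemmas that one Bernstein inequality makes redundant.
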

\begin{proof} First, we recall that by Lemma \ref{lem:alg_Lambda}, $\Lambda_N \mathcal{P}_N = \mathcal{P}_N \cap H^\infty_{odd}$. Then we apply Proposition \ref{prop:consis_W} with $v=v_N =u$ and, using Corollary \ref{cor:bound_W} , we get that
\begin{equation*}
\begin{split}
\| W_{k,N}^{\mathrm{cor}} u  \|_{H^{2a}}  &\leq \|W_{k,N}^{\mathrm{cor}} u  -  \Pi_{\mathcal{P}_N} W_{k}^{\mathrm{cor}} u  \|_{H^{2a}} + \| \Pi_{\mathcal{P}_N} W_{k}^{\mathrm{cor}} u  \|_{H^{2a}}  \\
&\leq N^{2(a-1)} \|W_{k,N}^{\mathrm{cor}} u  -  \Pi_{\mathcal{P}_N} W_{k}^{\mathrm{cor}} u  \|_{H^{2}} +  \| W_{k}^{\mathrm{cor}} u  \|_{H^{2a}} \\
&\leq N^{2(a-1)} \|W_{k,N}^{\mathrm{cor}} u  -   W_{k}^{\mathrm{cor}} u  \|_{H^{2}} +  \| W_{k}^{\mathrm{cor}} u  \|_{H^{2a}} \\
&\lesssim N^{-2(k-a)} \|u \|_{H^{2k}} + \| u\|_{H^{2a}} \lesssim \| u\|_{H^{2a}},
\end{split}
\end{equation*}
which is the desired result.
\end{proof}

\subsection{Error estimates of arbitrarily high order} \label{sec:errorest}
We are now in the position to state the main result of this paper. Somehow, we want to merge Proposition \ref{prop:cvg_space} about the space discretization and Proposition \ref{prop:disc_time} about the time discretization. Thus,
naturally, we define the full discretization by 
\begin{equation}
\label{eq:la_discretization}
u_{n,N} =    e^{\mathfrak{E}_{k,N}} (\Phi^{\mathrm{cor}}_{\tau, N})^n \Lambda_N  e^{-\mathfrak{E}_{k,N}}  w^{(0)}_N
\end{equation}
where
$$
\Phi^{\mathrm{cor}}_{\tau,N}=e^{-\ic b_s \tau \partial_x^2}  r(-\ic a_s \tau W^{\mathrm{cor}}_{k,N}) \cdots  e^{-\ic b_1 \tau \partial_x^2}   r(-\ic a_1 \tau W^{\mathrm{cor}}_{k,N})
$$
and, as in Subsection \ref{sub:time_disc}, $a_1, b_1, \cdots,  a_s, b_s\in \mathbb{R}$ are the coefficients of a splitting method of order $p\geq 1$, $s \geq 1$ is the number of stages  and $r$ is a real analytic function defined on a neighbourhood of the origin and such that
$$
r(z) = e^z + \mathcal{O}(z^{p+1}) \quad \text{as} \; z \to 0.
$$
Then, we prove the convergence of this full discretization.
\begin{theorem} \label{thm:conv}
If $1\leq a \leq k$ and $u^{(0)} \in H^{2k}_V$ then for all $T>0$ and $N\geq 1$, we have 
$$
\| u(t_n) - \mathcal{R} u_{n,N}\|_{H^{2a}} \lesssim_{T,k} (\tau^{\min(k-a,p)} + N^{-2(k-a)}) \| w^{(0)} \|_{H^{2k}}\quad \text{for}\; 0 \leq t_n \leq T,
$$
where $u\in C^0([0,T];H^{2k}_V)$ is the solution of \eqref{eq:ev_Dir} and $(u_{n,N})_{n\geq 0} \in \mathcal{P}_N^{\mathbb{N}}$ is the full discretization given by \eqref{eq:la_discretization}.
\end{theorem}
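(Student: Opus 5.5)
We split the error by the triangle inequality through the semi-discrete (space-discretized, exact-in-time) solution $v_N(t_n)$ of Proposition~\ref{prop:cvg_space}. By Proposition~\ref{prop:period_ext}, $u(t_n) = \mathcal{R} v(t_n)$ with $v$ as in \eqref{def:v_vN}. Since $\mathcal{R}$ is bounded from $H^{2a}(\mathbb{T})$ to $H^{2a}([0,1])$, we get
$$
\| u(t_n) - \mathcal{R} u_{n,N}\|_{H^{2a}} \leq \| v(t_n) - v_N(t_n)\|_{H^{2a}} + \| v_N(t_n) - u_{n,N}\|_{H^{2a}}.
$$
The first term is bounded by $N^{-2(k-a)}\|w^{(0)}\|_{H^{2k}}$ by \eqref{eq:lapin}. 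Both $v_N(t_n)$ and $u_{n,N}$ have the common outer factor $e^{\mathfrak{E}_{k,N}}$ and the common initial datum $z_N := \Lambda_N e^{-\mathfrak{E}_{k,N}} w_N^{(0)} \in \mathcal{P}_N\cap H^\infty_{odd}$. It therefore remains to prove two things: uniform boundedness of $e^{\mathfrak{E}_{k,N}}$ on $H^{2a}\cap\mathcal{P}_N$, and a time-discretization error estimate
$$
\| e^{-\ic t_n(\partial_x^2+W^{\mathrm{cor}}_{k,N})} z_N - (\Phi^{\mathrm{cor}}_{\tau,N})^n z_N\|_{H^{2a}} \lesssim_{T} \tau^{\min(k-a,p)} \|z_N\|_{H^{2k}}
$$
that holds uniformly in $N$. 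We also need $\|z_N\|_{H^{2k}} \lesssim \|w^{(0)}\|_{H^{2k}}$. We would obtain this bound by comparing $z_N$ with $\Lambda e^{-\mathfrak{E}_k}w^{(0)}$ via Lemmas~\ref{lem:consis_lambda} and~\ref{lem:consis_exp_E}, using an inverse inequality in $\mathcal{P}_N$ to lift the $H^2$ error to $H^{2k}$. The boundedness of $e^{\mathfrak{E}_{k,N}}$ in $H^{2a}$ would follow in the same way, by comparison with $e^{\mathfrak{E}_k}$ and Lemma~\ref{lem:bound_E_per}.

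For the time error, we work in the finite-dimensional invariant space $X_N = \mathcal{P}_N\cap H^\infty_{odd}$. This space is preserved by $\partial_x^2$ and by $W^{\mathrm{cor}}_{k,N}$, the latter because its range lies in $\Lambda_N\mathcal{P}_N$. We equip $X_N$ with the norms $H^{2j}$, $j\le k$. The structure is then exactly that of the classical framework of \cite{Jahnke:2000:EBF}. The operator $e^{-\ic t\partial_x^2}$ is an isometry on every $H^{2j}$, and by Corollary~\ref{cor:bound_W_N} the operator $B_N := W^{\mathrm{cor}}_{k,N}$ is bounded on each $H^{2j}\cap X_N$, $1\le j\le k$, uniformly in $N$. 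Hence, for small $\tau$, $r(-\ic a_m\tau B_N)$ is well defined through the power series of $r$, and it satisfies $\|r(-\ic a_m\tau B_N)\|_{H^{2j}\to H^{2j}} \le 1 + C\tau$. This yields stability, $\|(\Phi^{\mathrm{cor}}_{\tau,N})^n\|_{H^{2a}\to H^{2a}} \le e^{C T}$. The replacement of $e^z$ by $r$ costs a local error $\mathcal{O}(\tau^{p+1})$ in $H^{2a}$. For consistency, we would expand the local error of the splitting by Taylor or Duhamel, written in terms of iterated commutators $\mathrm{ad}_{\partial_x^2}^{m}(B_N)$ with unbounded $A = \partial_x^2$. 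Each commutator with $\partial_x^2$ loses two derivatives, while the order conditions cancel all terms of order $\le p$. The local error is therefore $\lesssim \tau^{q+1}\|z\|_{H^{2a+2q}}$ with $q = \min(k-a,p)$, given commutator bounds $\|\mathrm{ad}_{\partial_x^2}^m (B_N) w\|_{H^{2a}}\lesssim \|w\|_{H^{2a+2m}}$. These bounds follow by writing $\mathrm{ad}_{\partial_x^2}^m(B_N)$ as a sum of products of powers of $\partial_x^2$ and $B_N$, and then applying Corollary~\ref{cor:bound_W_N} at the levels $a,\dots,a+m\le k$. The exact flow is bounded on $H^{2k}\cap X_N$ uniformly over $[0,T]$. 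A Lady Windermere's fan argument then gives the global estimate with $\tau^{q}$.

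The main obstacle is obtaining the commutator and consistency bounds uniformly in $N$ at every regularity level. This requires $B_N$ to be bounded on $H^{2j}$ for all $j\le k$, which is exactly Corollary~\ref{cor:bound_W_N}. The key fact is that the exact flows stay in $X_N$, where the oddness encodes the Dirichlet compatibility, so no boundary terms ever appear. The bootstrap from $H^2_N$-consistency to $H^{2k}$-bounds for $z_N$ needs care. If it fails directly, we would instead use $\Pi_{\mathcal{P}_N}\Lambda e^{-\mathfrak{E}_k}w^{(0)}$ as an intermediate and absorb the difference into the $N^{-2(k-a)}$ term.
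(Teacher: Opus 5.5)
Your proposal is correct and follows essentially the same route as the paper: you split the error through the semi-discrete solution (Propositions~\ref{prop:period_ext} and~\ref{prop:cvg_space}), use uniform boundedness of $e^{\mathfrak{E}_{k,N}}$, apply a Jahnke--Lubich-type time estimate on $\mathcal{P}_N\cap H^\infty_{odd}$ driven by Corollary~\ref{cor:bound_W_N}, and bound $\|z_N\|_{H^{2k}}\lesssim\|w^{(0)}\|_{H^{2k}}$. The differences are minor. The paper gets the uniform bound on $\mathfrak{E}_{k,N}$ directly from Lemmas~\ref{lem:bound_A} and~\ref{lem:diam}, and the bound on $z_N$ from \eqref{eq:coquin} at $t=0$, $a=k$, which rests on the same comparison-plus-inverse-inequality argument you describe. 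It also simply cites \cite{Jahnke:2000:EBF}, whereas you sketch the commutator and Lady Windermere argument yourself.
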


 In order to prove Theorem~\ref{thm:conv}, we show first convergence of the space discretization in the new variables, and afterwards Proposition~\ref{prop:cvg_space}.

\begin{lemma} \label{lem:consis_flow}For all $N\geq 1$, all $v\in H^{2k}_{odd}(\mathbb{T})$, all $v_N \in  \mathcal{P}_N \cap H^\infty_{odd}$, all $T>0$ and all $t\in [-T,T]$, we have
$$
\|w_N(t) -w(t)  \|_{H^2} \lesssim_T \| v-v_N \|_{H^2}  + N^{-2k+2} \| v \|_{H^{2k}},
$$
where we set $w_N(t) = e^{-\ic t (\partial_x^2+W_{k,N}^{\mathrm{cor}})} v_N$ and $w(t) = e^{-\ic t (\partial_x^2+W_{k}^{\mathrm{cor}})} v$.
\end{lemma}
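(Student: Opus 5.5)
The plan is to compare $w_N$ and $w$ through a Duhamel formula for the free Schrödinger group $e^{-\ic t\partial_x^2}$ on $\mathbb{T}$. This group is an isometry on every $H^s(\mathbb{T})$ and preserves oddness. It also commutes with $\Pi_{\mathcal{P}_N}$, so $\mathcal{P}_N\cap H^\infty_{odd}$ is invariant under it.

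First I check that both flows are well defined and stay in the right spaces. By Corollary \ref{cor:bound_W}, $W^{\mathrm{cor}}_k$ is bounded on $H^{2j}_{odd}$ for $1\le j\le k$. Hence $w\in C^0([-T,T];H^{2k}_{odd})$ exists, with $\sup_{|t|\le T}\|w(t)\|_{H^{2k}}\lesssim_T\|v\|_{H^{2k}}$ by Gr\"onwall. The operator $W^{\mathrm{cor}}_{k,N}=\Lambda_N(\cdots)$ maps $\mathcal{P}_N$ into $\mathcal{P}_N\cap H^\infty_{odd}$ by Lemma \ref{lem:alg_Lambda}. So $w_N(t)$ solves a linear ODE on the finite-dimensional space $\mathcal{P}_N\cap H^\infty_{odd}$ and stays there. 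Corollary \ref{cor:bound_W_N} with $a=1$ then gives $\|W^{\mathrm{cor}}_{k,N}u\|_{H^2}\lesssim\|u\|_{H^2}$ uniformly in $N$ on that space.

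Next I write the Duhamel formula for the difference $e=w_N-w$:
$$
e(t)=e^{-\ic t\partial_x^2}(v_N-v)-\ic\int_0^t e^{-\ic(t-s)\partial_x^2}\big(W^{\mathrm{cor}}_{k,N}w_N(s)-W^{\mathrm{cor}}_kw(s)\big)\,\mathrm{d}s .
$$
I estimate the integrand with Proposition \ref{prop:consis_W}, taking $v\leftarrow w(s)$ and $v_N\leftarrow w_N(s)$:
$$
\|W^{\mathrm{cor}}_{k,N}w_N(s)-W^{\mathrm{cor}}_kw(s)\|_{H^2}\lesssim\|w_N(s)-w(s)\|_{H^2_N}+N^{-2k+2}\|w(s)\|_{H^{2k}}.
$$
The key point is that both $w_N(s)$ and $w(s)$ are odd, so they vanish at $0$ and $1$. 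The boundary part $N^{3/2}(|e(0)|+|e(1)|)$ of the $H^2_N$ norm is therefore zero, and $\|e(s)\|_{H^2_N}=\|e(s)\|_{H^2}$.

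Combining these, I get
$$
\|e(t)\|_{H^2}\le\|v_N-v\|_{H^2}+C\int_0^{|t|}\|e(s)\|_{H^2}\,\mathrm{d}s+C_TN^{-2k+2}\|v\|_{H^{2k}},
$$
and Gr\"onwall's inequality concludes.

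The main obstacle is keeping the $N^{3/2}$ boundary weight from appearing, which would destroy the estimate. Oddness of both trajectories, supplied by $\Lambda_N$ and Lemma \ref{lem:alg_Lambda} together with the hypotheses on $v$ and $v_N$, is exactly what removes it. The remaining care is to confirm that Proposition \ref{prop:consis_W} applies, i.e.\ $w(s)\in H^{2k}_{odd}$ and $w_N(s)\in\mathcal{P}_N$, which the first step provides.
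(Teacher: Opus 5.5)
Your proposal is correct and follows the paper's proof: Duhamel with respect to the isometric free group $e^{-\ic t\partial_x^2}$, Proposition~\ref{prop:consis_W} for the integrand, the $H^{2k}$ bound on $w$ from Corollary~\ref{cor:bound_W}, and then Gr\"onwall. Your explicit remark that oddness of $w(s)$ and $w_N(s)$ kills the boundary part of $\|\cdot\|_{H^2_N}$ spells out a step the paper uses silently. One small correction: it is the full generator $\partial_x^2+W^{\mathrm{cor}}_{k,N}=\Lambda_N(\cdots)$ that maps $\mathcal{P}_N$ into odd functions, not $W^{\mathrm{cor}}_{k,N}$ alone, and this is enough for the invariance you need.
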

\begin{proof} Without loss of generality, we assume $t\geq 0$.
By Duhamel, we have
$$
w(t) - w_N(t) = e^{-\ic t \partial_x^2} (v-v_N) + \int_{0}^t e^{-\ic (t-\tau) \partial_x^2} (W_{k}^{\mathrm{cor}} w(\tau) - W_{k,N}^{\mathrm{cor}} w_N(\tau)) \, \mathrm{d}\tau.
$$
Using that $e^{-\ic t \partial_x^2}$ is an isometry on $H^2$, we get by Proposition \ref{prop:consis_W}
$$
\| w(t) - w_N(t) \|_{H^2} \lesssim  \| v-v_N \|_{H^2}   + \int_{0}^t \| w(\tau) - w_N(\tau) \|_{H^2} \, \mathrm{d}\tau + N^{-2k+2} T \sup_{0\leq \tau \leq T} \| w(\tau) \|_{H^{2k}}.
$$
Then, we use that $W_{k}^{\mathrm{cor}}$ is bounded on $H^{2k}_{odd}$ (Corollary \ref{cor:bound_W}) to get that $\| w(\tau) \|_{H^{2k}} \lesssim_{T}  \| v \|_{H^{2k}}$.
Finally, we conclude by applying the Gr\"onwall inequality.
\end{proof}

\begin{ProofOf}{Proposition~\ref{prop:cvg_space}}
First let us prove that it suffices to deal with the case $a=1$: we assume that \eqref{eq:lapin} holds for $a=1$ and we prove it for $a>1$. We use the notation given in~\eqref{def:v_vN} for $v(t)$ and $v_N(t)$.
We have
\begin{equation*}
\begin{split}
\| v(t) - v_N(t) \|_{H^{2a}} &\leq \| (\mathrm{Id} - \Pi_{\mathcal{P}_N})v(t) \|_{H^{2a}} + \|  \Pi_{\mathcal{P}_N} v(t) -v_N(t) \|_{H^{2a}} \\
&\leq  N^{-2(k-a)}\| v(t) \|_{H^{2k}} + N^{2(a-1)} \| \Pi_{\mathcal{P}_N} v(t) -v_N(t) \|_{H^{2}} \\
&\leq N^{-2(k-a)}\| v(t) \|_{H^{2k}} + N^{2(a-1)} \|  v(t) -v_N(t) \|_{H^{2}} \\
&\mathop{\lesssim}^{\eqref{eq:lapin}}  N^{-2(k-a)}\| v(t) \|_{H^{2k}} + N^{2(a-1)} N^{-2k+2} \| w^{(0)}\|_{H^{2k}}.
\end{split}
\end{equation*}

Finally, it suffices to prove that $\| v(t) \|_{H^{2k}} \lesssim_T \| w^{(0)} \|_{H^{2k}}$. Since $\mathfrak{E}_k $ is bounded on $H^{2k}$ (Lemma~\ref{lem:bound_E_per}), and $W_k^{\mathrm{cor}}$ is bounded on $H^{2k}_{odd}$ (Corollary \ref{cor:bound_W}), we have 
\begin{equation} \label{eq:pouette1}
\| v(t) \|_{H^{2k}} \lesssim \|  e^{-\ic t (\partial_x^2+W_k^{\mathrm{cor}})} \Lambda e^{ -\mathfrak{E}_k  } w^{(0)} \|_{H^{2k}} \lesssim_T \|  \Lambda e^{ -\mathfrak{E}_k  } w^{(0)} \|_{H^{2k}}.
\end{equation}
 Furthermore, note that $\mathcal{R} w^{(0)} \in H^{2k}_V$, and therefore 
$$ 
\mathcal{R} e^{ -\mathfrak{E}_k  } w^{(0)} = e^{ -\mathfrak{C}_k  }  \mathcal{R} w^{(0)}  \in H^{2k}_{\mathrm{Dir}}
$$ 
due to Lemma~\ref{lem:pas_si_mal}. Thus, by Lemma \ref{lem:bound_lambda_per},
\begin{equation} \label{eq:pouette2}
\| \Lambda e^{ -\mathfrak{E}_k  } w^{(0)} \|_{H^{2k}} \lesssim \|  e^{ -\mathfrak{E}_k  } w^{(0)} \|_{H^{2k}} \lesssim \| w^{(0)} \|_{H^{2k}}.
\end{equation} 

Finally, to get \eqref{eq:lapin} in the case $a=1$, it suffices to apply successively Lemmata \ref{lem:consis_lambda}, \ref{lem:consis_exp_E}, \ref{lem:consis_flow} in order to control the $H^{2k}$-norms arising in \eqref{eq:pouette1}-\eqref{eq:pouette2}.
\end{ProofOf}

\begin{ProofOf}{Theorem~\ref{thm:conv}} First, by Proposition \ref{prop:period_ext}, it suffices to prove that
$$
 \| v(t_n) - u_{n,N}\|_{H^{2a}} \lesssim_{T,k} (\tau^{\min(k-a,p)} + N^{-2(k-a)}) \| w^{(0)} \|_{H^{2k}}\quad \text{for}\; 0 \leq t_n \leq T,
$$
where
$$
v(t) = e^{ \mathfrak{E}_k  } e^{-\ic t (\partial_x^2+W_k^{\mathrm{cor}})} \Lambda e^{ -\mathfrak{E}_k  } w^{(0)}.
$$
Applying Proposition \ref{prop:cvg_space} about the semi-discretization in space, we have
\begin{equation}
\label{eq:coquin}
 \| v(t) -  e^{ \mathfrak{E}_{k,N}  } e^{-\ic t (\partial_x^2+W_{k,N}^{\mathrm{cor}})} z_N^{(0)}  \|_{H^{2a}}  \lesssim_{T}  N^{-2(k-a)} \|w^{(0)} \|_{H^{2k}},
\end{equation}
where we denote
$$
z_N^{(0)} = \Lambda_N e^{ -\mathfrak{E}_{k,N}  } w_N^{(0)}.
$$
Then, using that $\mathfrak{E}_{k,N}$ is bounded (uniformly in $N$) in $H^{2a}$ (it is a direct corollary of Lemmata \ref{lem:bound_A} and \ref{lem:diam}), it suffices to prove the convergence of the time discretization of $\exp(-\ic t (\partial_x^2+W_{k,N}^{\mathrm{cor}}))$, i.e. we have
\begin{equation*}
\|e^{-\ic t_n (\partial_x^2+W_{k,N}^{\mathrm{cor}})} z_N^{(0)} - (\Phi^{\mathrm{cor}}_{\tau, N})^n z_N^{(0)}  \|_{H^{2a}} \lesssim_{T} \tau^{\min(k-a,r)}  \| w^{(0)} \|_{H^{2k}} \quad \text{for}\; 0 \leq t_n \leq T.
\end{equation*}
Since $W_{k,N}^{\mathrm{cor}}$ is bounded from $H^{2j}_{odd} \cap \mathcal{P}_N$ to $H^{2j}_{odd} \cap \mathcal{P}_N$ (uniformly in $N$) for all $j\in \{1,\ldots,N\}$ (Corollary \ref{cor:bound_W_N}), applying the classical convergence result for splitting methods \cite{Jahnke:2000:EBF}, we get that
\begin{equation*}
\|e^{-\ic t_n (\partial_x^2+W_{k,N}^{\mathrm{cor}})} z_N^{(0)} - (\Phi^{\mathrm{cor}}_{\tau, N})^n z_N^{(0)}  \|_{H^{2a}} \lesssim_{T} \tau^{\min(k-a,r)}  \| z_N^{(0)} \|_{H^{2k}} \quad \text{for}\; 0 \leq t_n \leq T.
\end{equation*}
As a consequence, it suffices to prove that $\| z_N^{(0)} \|_{H^{2k}} \lesssim  \| w^{(0)} \|_{H^{2k}}$. Indeed, since $\mathfrak{E}_{k,N}$ is bounded (uniformly in $N$) in $H^{2k}$, we have by \eqref{eq:coquin} (applied in $t=0$ with $a=k$),
$$
 \| z_N^{(0)}  \|_{H^{2k}} \lesssim \| e^{ -\mathfrak{E}_{k,N}  } v(0) \|_{H^{2k}} +  \|w^{(0)} \|_{H^{2k}} \lesssim \| e^{ \mathfrak{E}_{k}  } \Lambda e^{ -\mathfrak{E}_{k}  }  w^{(0)} \|_{H^{2k}} +  \|w^{(0)} \|_{H^{2k}}.
$$
Finally, since 
$$
\mathcal{R}e^{ -\mathfrak{E}_{k}  }  w^{(0)}= e^{ -\mathfrak{C}_{k}  }  u^{(0)} \in H^{2k}_{\mathrm{Dir}}
$$ 
due to Lemma \ref{lem:pas_si_mal}, we have by Lemma \ref{lem:bound_lambda_per} (and using that $\mathfrak{E}_{k}$ is bounded on $H^{2k}$) that
$$
\| e^{ \mathfrak{E}_{k}  } \Lambda e^{ -\mathfrak{E}_{k}  }  w^{(0)} \|_{H^{2k}} \lesssim   \|w^{(0)} \|_{H^{2k}},
$$
and thus, the desired error estimate holds true.
\end{ProofOf}

\section{Numerical results} \label{sec:numerics}
\sectionmark{\footnotesize  Numerical results}

In this section, we illustrate the numerical time integration of the linear Schrödinger equation
\begin{equation} \label{eq:ev_num}
\ic \partial_t u(t,x) = (\partial_x^2 +V(x))u(t,x) \quad \text{in}\; [0,T]\times (0,L), \qquad u(t,0) = u(t,L)=0 \quad \text{in}\; [0,T], 
\end{equation}
with initial condition $u^{(0)}(x)=u(0,x)$ by means of splitting methods. Thereby, we consider the subproblems~\eqref{prob:lapl} and~\eqref{prob:pot}, with exact flows $e^{-\ic t\partial^2_x}u^{(0)}$ and $e^{-\ic tV}u^{(0)}$ for $t \in [0, T]$.

\begin{remark} \label{rem:L}
 While we take $x \in (0,1)$ for the convergence analysis, we numerically integrate problem~\eqref{eq:ev_num} over a larger interval $(0, L)$, with $L>1$. As explained in the previous section, in order to define the corrected potential $W^{\mathrm{cor}}_k$ on the torus $ \mathbb{T} = \mathbb{R}/2\mathbb{Z}$, we require periodic extensions of the initial condition $u^{(0)} \mapsto w^{(0)}$, the potential $V \mapsto W$, as well as the functions $\varphi_{n,y} \mapsto \psi_{n,y}$, $ y \in \{0,1\}$. Classically, this is achieved by means of so-called bump functions, which ensure a smooth, periodic extension. However, derivatives of these extended functions may become very large, leading to a significant global error between the exact solution and the modified schemes. Thus, even though high order convergence still persists in this case, we may observe unfavorable error constants. To avoid such large constants, we choose $L>1$ and consider smooth functions $w^{(0)}, W, \psi_{n,y}$, $ y \in \{0,L\}$, on the larger torus $\mathbb{T}_L = \mathbb{R}/2L\mathbb{Z}$, and work with their restrictions to the interval $[0, L]$. Analogously, we consider the function spaces $H^{2k}$, $H^{2k}_{\mathrm{Dir}}$ and $H^{2k}_V$ over $[0,L]$.
\end{remark}

In this section, we numerically integrate in time the boundary value problem~\eqref{eq:ev_num} for various potentials $V$ and initial conditions $u^{(0)}$. Table~\ref{table:reg} summarizes all data considered, together with the compatibility conditions satisfied by the initial condition for each potential. In addition, we indicate in which figure the corresponding functions are used in the numerical experiments.

\begin{table}[H] \label{table:reg}
\captionsetup{labelformat=simple,name=Table,labelfont=bf,textfont=it}
\caption{Regularity of the initial condition $u^{(0)}$ \label{table:reg}}
\centering

\begin{tabular}{p{0.25\textwidth}|p{0.2\textwidth}|p{0.1\textwidth}|p{0.29\textwidth}}
\toprule
 Initial condition & Potential & Regularity & Figure \\
 \midrule
 $u_1^{(0)}(x)=\sin(\frac{2\pi x}{L})$ & $V_1(x)=\cos(\frac{2\pi x}{L})$ & $H^{\infty}_{V_1}$ & Fig.~\ref{fig:naiv} (left) \\

&&\\[-2.5ex]
\hline
&&\\[-2.5ex]

 \multirow{3}{*}{$u_2^{(0)}(x)=x(L-x)e^{\frac{x}{L}-\frac{x^2}{L^2}} $}
& $V_2(x)=1+\frac{4x}{L^3}-\frac{4x^2}{L^4}$ & $H^{\infty}_{V_2}$ & Fig.~\ref{fig:naiv} (right), Fig.~\ref{fig:corr1} (left) \\

&&\\[-2.5ex]
\cline{2-4}
&&\\[-2.5ex]

& $V_3(x)=\sin(\frac{2\pi x}{L})$ & $H^4_{V_3}$ &  Fig.~\ref{fig:corr1} (right) \\

&&\\[-2.5ex]
\cline{2-4}
&&\\[-2.5ex]

  & $V_4(x)=e^{\frac{x}{L^2}}$  & $H^4_{V_4}$ & Fig.~\ref{fig:corr2} \\
\bottomrule
\end{tabular}
\end{table}

For the illustration of the results in the previous sections, we focus on the symetric splitting scheme~\eqref{y4} with real coefficients. Recall that the method is of formal order four, but converges in general with reduced order when applied to problem~\eqref{eq:ev_num}, in contrast to the second order Strang splitting method~\eqref{strang},
which does not suffer from an order reduction, see Figure~\ref{fig:naiv}.

\begin{remark} \label{rem:oddeven}
If an odd initial condition $w^{(0)} \in H^{2k}$ and an even potential $W$ are considered on the torus $ \mathbb{T}_L$, then both, the Laplacian $u \mapsto \partial^2_xu$ and the potential $u \mapsto Wu$, commute with the projector $\Lambda$ onto the subspace of odd periodic functions $H^{2k}_{\text{odd}}(\mathbb{T}_L)$. Hence, the exact flows $e^{-\ic t\partial^2_x}$ and $e^{-\ic tV}$ preserve $H^{2k}_{\text{odd}}(\mathbb{T}_L)$, and the numerical solutions obtained by the splitting methods~\eqref{strang} and~\eqref{y4} on the torus coincide (after restriction to the interval $(0, L)$) with the splitting solutions of problem~\eqref{eq:ev_num}~\cite{Bona:2018:NBV}. With $u^{(0)}_1$ and $V_1$ we are in this setting, and therefore the naive schemes converge with full order, whereas the mentioned regularity is not satisfied for $u^{(0)}_2$ and $V_2$.
\end{remark}

In the following, we denote the naive splitting schemes without any corrections by \emph{Strang} for~\eqref{strang} and \ynull for method~\eqref{y4}. Moreover, \yk refers to the family of corrected methods~\eqref{eq:la_discretization}, depending on the parameter $k=2, 3, 4$, where we use the identities in Example~\ref{ex:c2c3c4} to compute the correctors $\mathfrak{E}_{2,N}, \mathfrak{E}_{3,N}$ and $\mathfrak{E}_{4,N}$. The flows $e^{\pm \mathfrak{E}_{k,N}}$ are computed by one step of an explicit fourth order Runge-Kutta method at time $t=1$. The method was chosen out of simplicity, there are other possibilities to approximate the exact flow of the corrector functions. In general, the computational costs of this flows can be very big for an approximation with high precision. However, for the considered examples, we do not observe a significant improvement in terms of the error constant if we choose more steps in order to compute $e^{\pm \mathfrak{E}_{k,N}}$. As reference solution at final time $T$, we use the same Runge-Kutta method, here with a very small time step $\tau_{ref} = 10^{-6}$. We choose $N=512$ points to discretize the interval $[0,L]$ with $L=2\pi$, and we proceed the simulation for time steps $\tau = 0.02\cdot 2^{-n}, n= 0, \ldots, 5$, and a final time $T=0.1$. The \textsc{Matlab} codes which compute these modified splitting schemes and illustrates their convergence are available at \cite{codesMatlab26}.

\begin{figure}[!tbp]
\begin{minipage}[c]{0.48\textwidth}
\includegraphics[width=\textwidth]{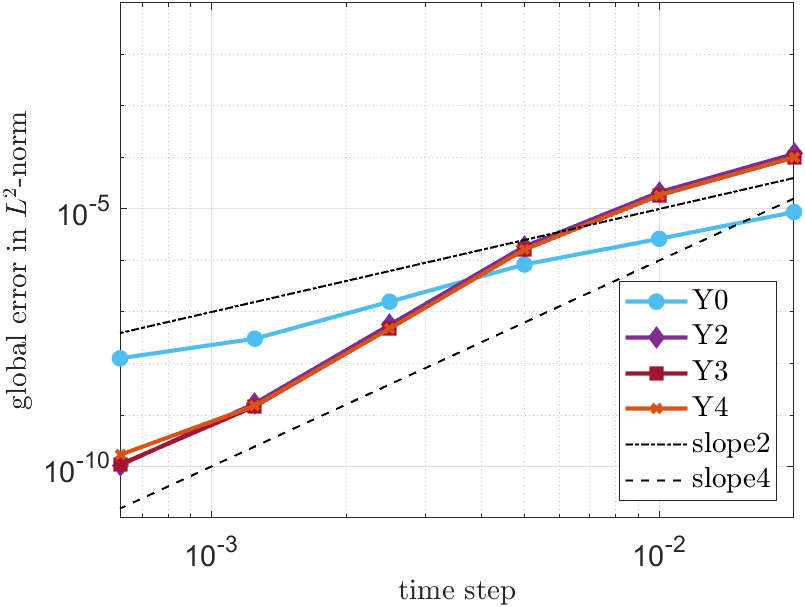}
\end{minipage}
\quad
\begin{minipage}[c]{0.48\textwidth}
\includegraphics[width=\textwidth]{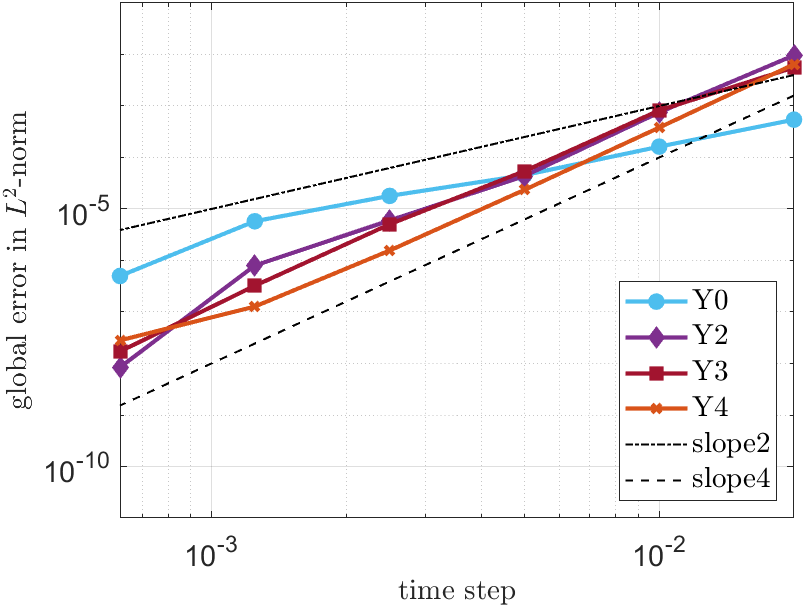}
\end{minipage}
\vspace*{1mm}
\caption{\small Convergence error of the corrected splitting schemes \yk, $k=2, 3, 4$ applied to problem~\eqref{eq:ev_num} for two different potentials $V_2$ (left) and $V_3$ (right), and initial condition $u^{(0)}_2$. Comparison to the naive method \ynull. Reference slopes are given in dashed lines. }
\label{fig:corr1}
\end{figure}

\begin{figure}[!tbp]
\begin{minipage}[c]{0.48\textwidth}
\includegraphics[width=\textwidth]{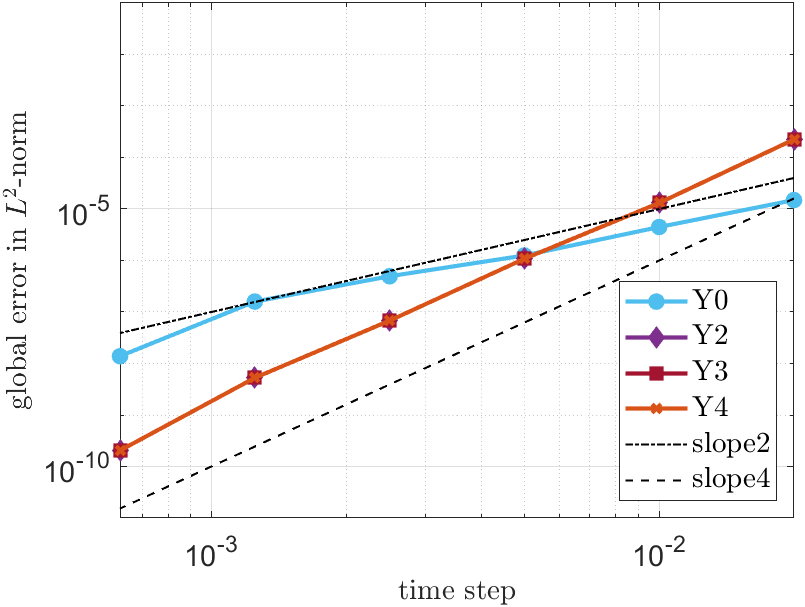}
\end{minipage}
\quad
\begin{minipage}[c]{0.48\textwidth}
\includegraphics[width=\textwidth]{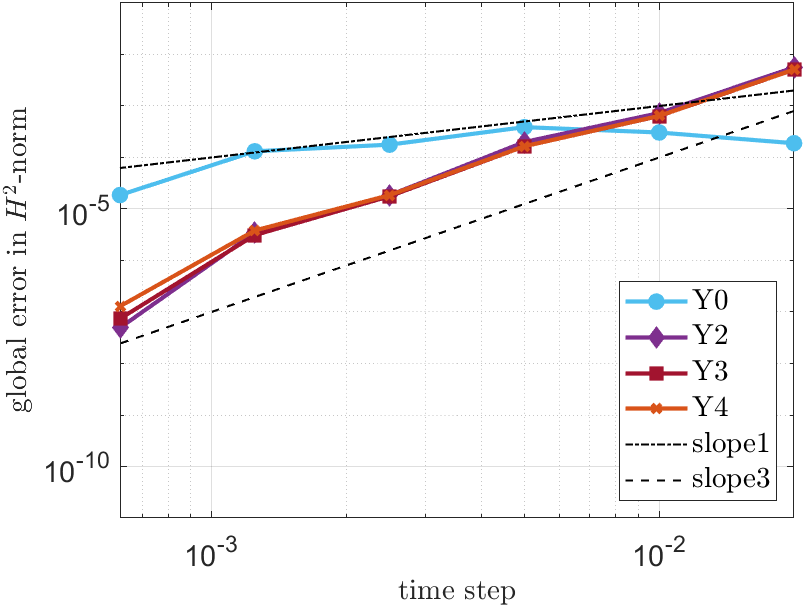}
\end{minipage}
\vspace*{1mm}
\caption{\small Convergence error of the corrected splitting schemes \yk, $k=2, 3, 4$ applied to problem~\eqref{eq:ev_num} for the potential $V_4$ and initial condition $u^{(0)}_2$. Comparison to the naive method \ynull. The global error is plotted in the $L^2$-norm (left) and the $H^2$-norm (right). Reference slopes are given in dashed lines.}
\label{fig:corr2}
\end{figure}

In the following two Figures, we consider the initial condition $u_2^{(0)}$, whose regularity depends on the potential, see Table~\ref{table:reg}. In view of Theorem~\ref{thm:conv}, we expect the global error of \yk, $k=2,3,4$, to be of order $\mathcal{O}(\tau^{k-1})$ in the $L^2$-norm, whenever $u_2^{(0)} \in H^{2k}_V$. 

In Figure~\ref{fig:corr1}, we compare the convergence of the naive scheme \ynull with the one of its modifications \yzwei, \ydrei and \yvier for two potentials $V_2$ and $V_3$. We have $u_2^{(0)} \in H^{\infty}_{V_2} \subset H^{8}_{V_2}$, so, by the convergence analysis, we expect a third order convergence for the scheme \yvier. However, numerically we observe that the corrected method converges with order four for almost all considered time steps $\tau$. It seems that this behavior persists for the schemes \yzwei and \ydrei. Furthermore, note that for $V_3$, the initial conditions has lower regularity, namely $u_2^{(0)} \in H^{4}_{V_3}$, than what is theoretically required for a full order convergence. As before, we observe order reduction for \ynull, whereas all corrected methods achieve full fourth order convergence. Moreover, for both considered potentials, the modified methods outperform the naive scheme also in terms of the error constant for small time steps $\tau$. 

In Figure~\ref{fig:corr2}, we consider the potential $V_4$, which is, in contrast to $V_2$ and $V_3$, non-symmetric with respect to the middle point $L/2$ of the interval $[0,L]$, and again we have only $u_2^{(0)} \in H^{4}_{V_4}$. Nevertheless, for all values $k= 2, 3, 4$, the corrected methods still exhibit full order convergence, while the naive scheme \ynull suffers from an order reduction and converges with order 1.5. In the right panel, we additionally plot the error in the $H^2$-norm (which corresponds to $a=1$ in Theorem~\ref{thm:conv}), and observe third order convergence for all modified schemes, rather than only in the case $k=4$, as predicted by the theorem. Furthermore, for small time steps, the corrected methods again yield smaller error constants, regardless of which norm is used to measure the global error.

\begin{figure}[!tbp]
\begin{minipage}[c]{0.48\textwidth}
\includegraphics[width=\textwidth]{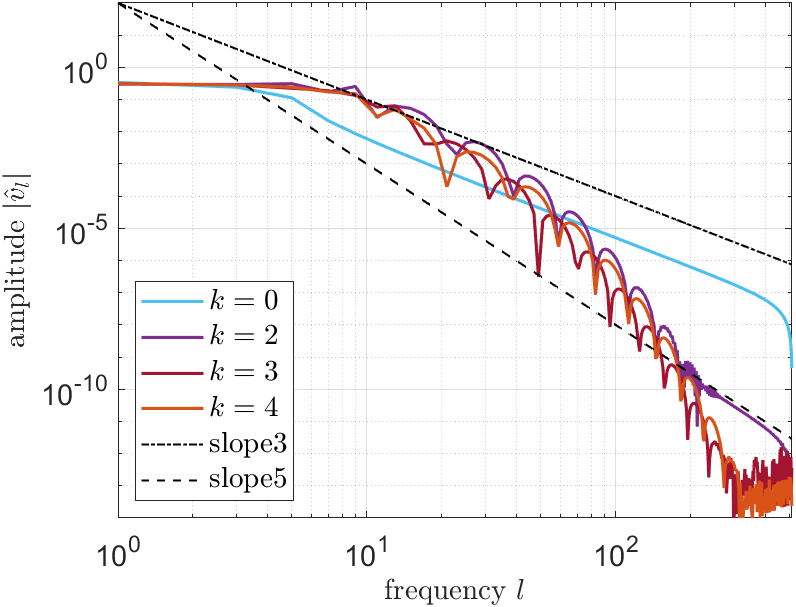}
\end{minipage}
\quad
\begin{minipage}[c]{0.48\textwidth}
\includegraphics[width=\textwidth]{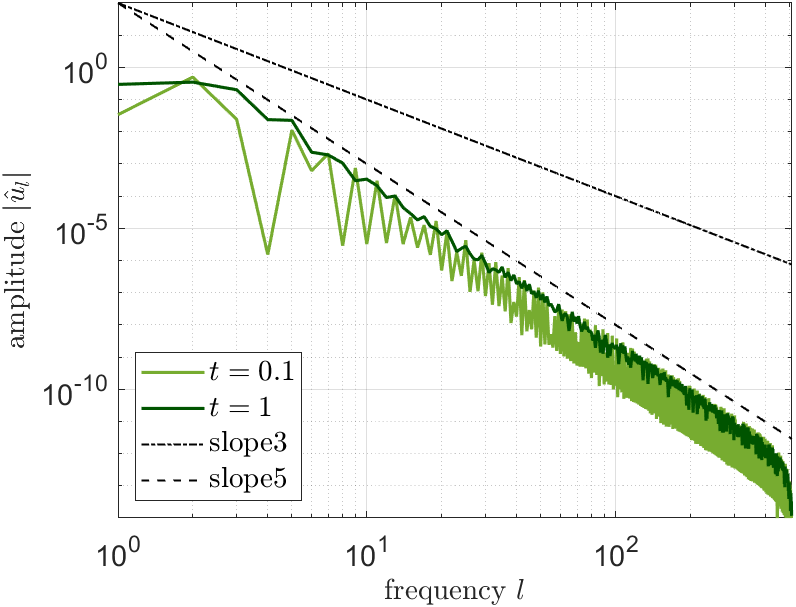}
\end{minipage}
\vspace*{1mm}
\caption{ \small Left: Regularity of the classical potential $V=V_3$ and its corrected version $V_k^{\text{cor}}$ applied to $u=u^{(0)}_1$. We plot the Fourier modes $|\hat{v}_{l}|$ of $\mathcal{A}_N(W_3w^{(0)}_1)$ ($k=0$) and $W_{k,N}^{\text{cor}}e^{-\mathfrak{E}_{k,N}}w^{(0)}_{1,N}$ for $k=2, 3, 4$. Right: Fourier modes $|\hat{u}_{l}|$ of the exact solution of the problem~\eqref{eq:ev_num} with $u^{(0)}_1, V_3$ at time $t=0.1$ and $t=1$. Reference slopes of order three and five are given in dashed lines.}
\label{fig:intro_schr_fm}
\end{figure}

\begin{remark} \label{rem:regularityofschr}
The regularity of the corrected potential $W^{\mathrm{cor}}_{k,N}$ applied to some $v_N \in \mathcal{P}_N$ is usually observed only for high Fourier modes. However, in this regime round-off errors show up and thus, the difference in this regularity is often not visible for different values of $k$.
In order to illustrate this phenomenon, we plot the Fourier coefficients of $v_N=\mathcal{A}_N(W_3w^{(0)}_1)$ and the corresponding corrected potential $W_{k,N}^{\text{cor}}e^{-\mathfrak{E}_{k,N}}w^{(0)}_{1,N}$ for $k=2, 3, 4$ in Figure~\ref{fig:intro_schr_fm}. Thereby, we denote by $W_3$ and $w^{(0)}_1$ the extended potential $V_3$, initial condition $u^{(0)}_1$ respectively. Note that $v_N$ is even, and therefore all the $\sin$-terms vanish (up to machine precision), that is why we only show the corresponding $\cos$-terms. We observe that the modes of $v_N$ are of order three in terms of the frequency $l$, what confirms that the boundary conditions are not preserved by the classical splittings. In contrast, as proven in Theorem~~\ref{thm:main}, the corrected potential preserves the space $H^{2k}_{\mathrm{Dir}}$. Precisely, the expected regularity can be observed for $k=2$ for high modes. For $k=3,4$, round-off errors appear in this regime. In contrast, for lower modes we cannot really spot a difference between $W^{\mathrm{cor}}_ {2,N}, W^{\mathrm{cor}}_{3,N}$ and $W^{\mathrm{cor}}_{4,N}$. As illustrated in Figures~\ref{fig:corr1} and~\ref{fig:corr2}, the simplest correction, what corresponds to the case $k=2$, often avoids order reduction in the splitting scheme~\eqref{y4}. In addition, we observe that the Fourier coefficients of the exact solution are of order five in terms of $l$, which explains numerically that we gain two orders thanks to the dispersion $\partial^2_x$, see Figure~\ref{fig:intro_schr_fm} (right).
\end{remark}

To conclude, although the naive scheme \ynull has formal order four, it generally integrates the boundary value problem~\eqref{eq:ev_num} with a reduced convergence order between one and two. In contrast, the corrected scheme \yvier achieves at least third order convergence whenever $u^{(0)} \in H^8_V$, which confirms the statement of Theorem~\ref{thm:conv}. In practice, \yvier converges with full order four for many examples, which is finally better than what we expect from the convergence analysis. Numerically, this result persists for initial conditions which do not satisfy the compatibility conditions at sufficiently high order, and also for the methods \yzwei and \ydrei.

\section*{Conclusion}
\sectionmark{\footnotesize  References}

We introduced to the best of our knowledge the first corrector technique for splitting methods for the approximation of smooth solutions of the linear Schrödinger equation with non-trivial compatibility conditions, and obtained a family of corrected splitting methods achieving an arbitrary high order of accuracy. In contrast, note that the similar question of arbitrarily high order remains open in the context of splitting methods for parabolic problems with non-periodic boundary conditions.
Although the proposed implementation is mainly for validating the theory, we believe that this contribution paves the way for a more efficient implementation with possible future generalizations to multiple dimensions and beyond the linear case.

\section*{Acknowledgement}

 J.B. thanks Dario Bambusi and Dorian Le Peutrec for stimulating discussions about compatibility conditions. J.B. was partially supported by the ANR project KEN ANR-22-CE40-0016. R.H. and G.V. where partially supported by the Swiss National Science Foundation, projects No. 200020\_214819, No. 200020\_192129 and No. 10009199.

\bibliographystyle{plainurl}
\bibliography{ref_schroedinger}

\begin{thebibliography}{10}

\bibitem{Abou:2025:ACO}
C.~Abou~Khalil and J.~Bernier.
\newblock Almost conservation of the harmonic actions for fully discretized
  nonlinear {K}lein--{G}ordon equations at low regularity.
\newblock {\em IMA J. Num. Anal. draf098}, 2025.
\newblock \href {https://doi.org/10.1093/imanum/draf098}
  {\path{doi:10.1093/imanum/draf098}}.

\bibitem{Antoine:2008:ARO}
X.~Antoine, A.~Arnold, Ch. Besse, M.~Ehrhardt, and A.~Schädle.
\newblock A review of transparent and artificial boundary conditions techniques
  for linear and nonlinear {S}chrödinger equations.
\newblock {\em Commun. Comput. Phys.}, 4(4):729--796, 2008.
\newblock \href {https://doi.org/10.14279/depositonce-15662}
  {\path{doi:10.14279/depositonce-15662}}.

\bibitem{Antoine:2001:CSA}
X.~Antoine and Ch. Besse.
\newblock Construction, structure and asymptotic approximations of a
  microdifferential transparent boundary condition for the linear
  {S}chrödinger equation.
\newblock {\em J. Math. Pures Appl.}, 80(7):701--738, 2001.
\newblock \href {https://doi.org/10.1016/S0021-7824(01)01213-2}
  {\path{doi:10.1016/S0021-7824(01)01213-2}}.

\bibitem{Antoine:2012:ABC}
X.~Antoine, Ch. Besse, and P.~Klein.
\newblock Absorbing boundary conditions for the two-dimensional {S}chrödinger
  equation with an exterior potential. {P}art {I}: construction and \emph{a
  priori} estimates.
\newblock {\em Math. Models Methods Appl. Sci.}, 22(10), 2012.
\newblock \href {https://doi.org/10.1142/S0218202512500261}
  {\path{doi:10.1142/S0218202512500261}}.

\bibitem{Antoine:2013:ABC}
X.~Antoine, Ch. Besse, and P.~Klein.
\newblock Absorbing boundary conditions for the two-dimensional {S}chrödinger
  equation with an exterior potential. {P}art {II}: discretization and
  numerical results.
\newblock {\em Numer. Math.}, 125:191--223, 2013.
\newblock \href {https://doi.org/10.1007/s00211-013-0542-8}
  {\path{doi:10.1007/s00211-013-0542-8}}.

\bibitem{Bao:2024:OEB}
W.~Bao, Y.~Ma, and C.~Wang.
\newblock Optimal error bounds on time-splitting methods for the nonlinear
  {S}chrödinger equation with low regularity potential and nonlinearity.
\newblock {\em Math. Models Methods Appl. Sci.}, 34(5):803--844, 2024.
\newblock \href {https://doi.org/10.1142/S0218202524500155}
  {\path{doi:10.1142/S0218202524500155}}.

\bibitem{Bao:2005:FTL}
W.~Bao and J.~Shen.
\newblock A fourth-order time-splitting {L}aguerre-{H}ermite pseudospectral
  method for {B}ose-{E}instein condensates.
\newblock {\em SIAM J. Sci. Comput.}, 26(6):2010--2028, 2005.
\newblock \href {https://doi.org/10.1137/030601211}
  {\path{doi:10.1137/030601211}}.

\bibitem{Bao:2008:AGP}
W.~Bao and J.~Shen.
\newblock A generalized-{L}aguerre-{H}ermite pseudospectral method for
  computing symmetric and central vortex states in {B}ose-{E}instein
  condensates.
\newblock {\em SIAM J. Sci. Comput.}, 227:9778--9793, 2008.
\newblock \href {https://doi.org/10.1016/j.jcp.2008.07.017}
  {\path{doi:10.1016/j.jcp.2008.07.017}}.

\bibitem{codesMatlab26}
Joackim Bernier, Ramona H\"aberli, and Gilles Vilmart.
\newblock Arbitrary high order splitting methods for linear schrödinger
  equations with non-trivial compatibility conditions.
\newblock {\em Yareta [Matlab source code]}, 2026.
\newblock \href {https://doi.org/10.26037/yareta:v72k2xeoh5bnzdqtyjbggigq2q}
  {\path{doi:10.26037/yareta:v72k2xeoh5bnzdqtyjbggigq2q}}.

\bibitem{Bertoli:2021:SOT}
G.~Bertoli, Ch. Besse, and G.~Vilmart.
\newblock Superconvergence of the {S}trang splitting when using the
  {C}rank-{N}icolson scheme for parabolic {PDE}s with {D}irichlet and oblique
  boundary conditions.
\newblock {\em Math. Comput.}, 90(332), 2021.
\newblock \href {https://doi.org/10.1090/mcom/3664}
  {\path{doi:10.1090/mcom/3664}}.

\bibitem{Bertoli:2020:SSM}
G.~Bertoli and G.~Vilmart.
\newblock Strang splitting method for semilinear parabolic problems with
  inhomogeneous boundary conditions: a correction based on the flow of the
  nonlinearity.
\newblock {\em SIAM J. Sci. Comput.}, 42(3):A1913--A1934, 2020.
\newblock \href {https://doi.org/10.1137/19M1257081}
  {\path{doi:10.1137/19M1257081}}.

\bibitem{Blanes:1999:SIW}
S.~Blanes, F.~Casas, and J.~Ros.
\newblock Symplectic integration with processing: a general study.
\newblock {\em SIAM J. Sci. Comput.}, 21(2):711--727, 1999.
\newblock \href {https://doi.org/10.1137/S1064827598332497}
  {\path{doi:10.1137/S1064827598332497}}.

\bibitem{Blanes:2000:SMF}
S.~Blanes and P.C. Moan.
\newblock Splitting methods for the time-dependent {S}chrödinger equation.
\newblock {\em Phys. Letters A}, 265(1--2):35--42, 2000.
\newblock \href {https://doi.org/10.1016/S0375-9601(99)00866-X}
  {\path{doi:10.1016/S0375-9601(99)00866-X}}.

\bibitem{Bona:2018:NBV}
J.~L. Bona, S.-M. Sun, and B.-Y. Zhang.
\newblock Nonhomogeneous boundary-value problems for one-dimensional nonlinear
  {S}chrödinger equations.
\newblock {\em J. Math. Pures Appl.}, 109:1--66, 2018.
\newblock \href {https://doi.org/10.1016/j.matpur.2017.11.001}
  {\path{doi:10.1016/j.matpur.2017.11.001}}.

\bibitem{Butcher:1969:TEO}
J.~C. Butcher.
\newblock {\em The effective order of {R}unge-{K}utta methods}, volume 109 of
  {\em Lecture Notes in Math.}
\newblock In Morris, J. L. (eds) \emph{Proceedings of conference on the
  numerical solutions of differential equations}, pages 133--139, 1969.

\bibitem{Einkemmer:2015:OOR}
L.~Einkemmer and A.~Ostermann.
\newblock Overcoming order reduction in diffusion-reaction splitting. {P}art 1:
  {D}irichlet boundary conditions.
\newblock {\em SIAM J. Sci. Comput.}, 37(3):A1577--A1592, 2015.
\newblock \href {https://doi.org/10.1137/140994204}
  {\path{doi:10.1137/140994204}}.

\bibitem{Einkemmer:2016:OOR}
L.~Einkemmer and A.~Ostermann.
\newblock Overcoming order reduction in diffusion-reaction splitting. {P}art 2:
  Oblique boundary conditions.
\newblock {\em SIAM J. Sci. Comput.}, 38(6):A3741--A3757, 2016.
\newblock \href {https://doi.org/10.1137/16M1056250}
  {\path{doi:10.1137/16M1056250}}.

\bibitem{Faou:2012:GNI}
E.~Faou.
\newblock {\em Geometric numerical integration and {S}chr\"odinger equations}.
\newblock Zurich Lectures in Advanced Mathematics. European Mathematical
  Society (EMS), Z\"urich, 2012.
\newblock \href {https://doi.org/10.4171/100} {\path{doi:10.4171/100}}.

\bibitem{Hairer:2010:GNI}
E.~Hairer, Ch. Lubich, and G.~Wanner.
\newblock {\em Geometric numerical integration. Structure-preserving algorithms
  for ordinary differential equations}, volume~31 of {\em Springer Series in
  Comp. Math.}
\newblock Springer, Heidelberg, 2006.
\newblock Second edition.

\bibitem{Haeberli:2026:OTO}
R.~Häberli.
\newblock Overcoming the order barrier two in splitting methods when applied to
  semilinear parabolic problems with non-periodic boundary conditions.
\newblock {\em SMAI J. Comput. Math.}, 12:269--288, 2026.
\newblock \href {https://doi.org/10.5802/smai-jcm.149}
  {\path{doi:10.5802/smai-jcm.149}}.

\bibitem{Jahnke:2000:EBF}
T.~Jahnke and Ch. Lubich.
\newblock Error bounds for exponential operator splittings.
\newblock {\em BIT Numer. Math.}, 40:735--744, 2000.
\newblock \href {https://doi.org/10.1023/A:1022396519656}
  {\path{doi:10.1023/A:1022396519656}}.

\bibitem{Ji:2024:LRF}
L.~Ji, A.~Ostermann, F.~Rousset, and K.~Schratz.
\newblock Low regularity full error estimates for the cubic nonlinear
  {S}chrödinger equation.
\newblock {\em SIAM J. Numer. Anal.}, 62(5):2071--2086, 2024.
\newblock \href {https://doi.org/10.1137/23M1619617}
  {\path{doi:10.1137/23M1619617}}.

\bibitem{Ji:2025:LRE}
L.~Ji, A.~Ostermann, F.~Rousset, and K.~Schratz.
\newblock Low regularity error estimates for the time integration of 2{D}
  {NLS}.
\newblock {\em IMA J. Numer. Anal. (to appear)}, 2025.
\newblock \href {https://doi.org/10.48550/arXiv.2301.10639}
  {\path{doi:10.48550/arXiv.2301.10639}}.

\bibitem{Kato:1985:ADE}
T.~Kato.
\newblock {\em Abstract differential equations and nonlinear mixed problems}.
\newblock Lezioni fermiane, 1985, Pisa.

\bibitem{Kawashima:1992:GEA}
S.~Kawashima and Y.~Shibata.
\newblock Global existence and exponential stability of small solutions to
  nonlinear viscoelasticity.
\newblock {\em Commun. Math. Phys.}, 148:189--208, 1992.
\newblock \href {https://doi.org/10.1007/BF02102372}
  {\path{doi:10.1007/BF02102372}}.

\bibitem{Lasiecka:1986:NHB}
I.~Lasiecka, J.-L Lions, and R.~Triggiani.
\newblock Non homogeneous boundary value problems for second order hyperbolic
  operators.
\newblock {\em J. Math. Pures Appl.}, 65:149--192, 1986.
\newblock URL: \url{https://api.semanticscholar.org/CorpusID:118945056}.

\bibitem{Leimkuhler:2004:SHD}
B.~Leimkuhler and S.~Reich.
\newblock {\em Simulating {H}amiltonian dynamics}.
\newblock Cambridge Monographs on Appl. and Comput. Math. 14. Cambridge
  University Press, Cambridge, 2004.

\bibitem{Lubich:2008:FQT}
Ch. Lubich.
\newblock {\em From quantum to classical molecular dynamics: reduced models and
  numerical analysis}.
\newblock Zürich Lectures in Adv. Math. European Mathematical Society (EMS),
  Zürich, 2008.

\bibitem{Neri:1985:LAA}
F.~Neri.
\newblock {L}ie algebras and canonical integration.
\newblock University of Maryland, draft, 1985.

\bibitem{Perrin:2025:COR}
T.~Perrin.
\newblock Change of regularity in controllability and observability of systems
  of wave equations.
\newblock {\em ESAIM: COCV}, 31:1--58, 2025.
\newblock \href {https://doi.org/10.1051/cocv/2025033}
  {\path{doi:10.1051/cocv/2025033}}.

\bibitem{Sulem:1999:TNS}
C.~Sulem and P.-L. Sulem.
\newblock {\em The nonlinear {S}chrödinger equation: self focusing and wave
  collapse}, volume 139 of {\em Appl. Math. Sci.}
\newblock Springer-Verlag New York, 1999.

\bibitem{Thalhammer:2008:HOE}
M.~Thalhammer.
\newblock High-order exponential operator splitting methods for time-dependent
  {S}chrödinger equations.
\newblock {\em IMA J. Numer. Anal.}, 46(4):2022--2038, 2008.
\newblock \href {https://doi.org/10.1137/060674636}
  {\path{doi:10.1137/060674636}}.

\bibitem{Wells:2019:AFA}
D.~Wells and H.~Quiney.
\newblock A fast and adaptable method for high accuracy integration of the
  time-dependent {S}chrödinger equation.
\newblock {\em Sci- Reports}, 782(9), 2019.
\newblock \href {https://doi.org/10.1038/s41598-018-37382-0}
  {\path{doi:10.1038/s41598-018-37382-0}}.

\bibitem{Yoshida:1990:COH}
H.~Yoshida.
\newblock Construction of higher order symplectic integrators.
\newblock {\em Phys. Letters A}, 150(5-7):262--268, 1990.
\newblock \href {https://doi.org/10.1016/0375-9601(90)90092-3}
  {\path{doi:10.1016/0375-9601(90)90092-3}}.

\end{thebibliography}

\end{document}